\documentclass[11pt]{article}
\usepackage{amsfonts}
\usepackage{mathrsfs}
\usepackage{bbm}
\usepackage{amsfonts}
\usepackage{amssymb,amsmath,amsthm,graphicx}
\usepackage{float}
\usepackage[colorlinks, linkcolor=red]{hyperref}
\usepackage{color, xcolor}
\usepackage{cite}
\usepackage[paper=a4paper, left=1.6cm, right=1.6cm, top=1.8cm, bottom=1.6cm, headheight=5.5pt, footskip=0.8cm, footnotesep=0.8cm, centering, includefoot]{geometry}


\hypersetup{urlcolor=red, citecolor=red}

\DeclareMathOperator{\divv}{div}
\DeclareMathOperator{\curl}{curl}

\allowdisplaybreaks

\begin{document}
\title{Global well-posedness for three-dimensional compressible viscous micropolar and heat-conducting fluids with vacuum at infinity and large oscillations
\thanks{
Y. Liu was supported by National Natural Science Foundation of China (No. 11901288), Scientific Research Foundation of Jilin Provincial Education Department (No. JJKH20210873KJ), Postdoctoral Science Foundation of
China (No. 2021M691219), and Natural Science Foundation of Changchun Normal University.  X. Zhong was supported by National Natural Science Foundation of China (Nos. 11901474, 12071359) and Exceptional Young Talents Project of Chongqing Talent (No. cstc2021ycjh-bgzxm0153).}
}
\author{Yang Liu$\,^{\rm 1, 2}\,$\quad Xin Zhong$\,^{\rm 3}\,${\thanks{Corresponding author. E-mail address: liuyang0405@ccsfu.edu.cn (Y. Liu), xzhong1014@amss.ac.cn (X. Zhong).}}
\date{}\\
\footnotesize $^{\rm 1}\,$
School of Mathematics, Jilin University, Changchun 130012, P. R. China\\
\footnotesize $^{\rm 2}\,$ College of Mathematics, Changchun Normal
University, Changchun 130032, P. R. China\\
\footnotesize $^{\rm 3}\,$ School of Mathematics and Statistics, Southwest University, Chongqing 400715, P. R. China} \maketitle
\newtheorem{theorem}{Theorem}[section]
\newtheorem{definition}{Definition}[section]
\newtheorem{lemma}{Lemma}[section]
\newtheorem{proposition}{Proposition}[section]
\newtheorem{corollary}{Corollary}[section]
\newtheorem{remark}{Remark}[section]
\renewcommand{\theequation}{\thesection.\arabic{equation}}
\catcode`@=11 \@addtoreset{equation}{section} \catcode`@=12
\maketitle{}

\begin{abstract}
We investigate global well-posedness to the Cauchy problem of three-dimensional compressible viscous and heat-conducting micropolar fluid equations with zero density at infinity. By delicate energy estimates, we establish global existence and uniqueness of strong solutions under some smallness condition depending only on the parameters appeared in the system and the initial mass. In particular, the initial
mass can be arbitrarily large. This improves our previous work \cite{LZ22}. Moreover, we also generalize the result \cite{HLZ21} to the case that vacuum is allowed at infinity.
\end{abstract}

\textit{Key words and phrases}. Compressible heat-conducting micropolar fluids; global well-posedness; Cauchy problem; vacuum at infinity.

2020 \textit{Mathematics Subject Classification}.  35Q35; 76N10.


\section{Introduction}

Micropolar fluid equations, which were suggested and introduced by Eringen in the 1960s (see \cite{E66}), are a significant step toward generalization of the Navier-Stokes equations. It is a type of fluids which exhibits micro-rotational effects and micro-rotational inertia, and can be viewed as a non-Newtonian fluid. Physically, micropolar fluid may represent fluids that consist of rigid, randomly oriented (or spherical particles) suspended in a viscous medium, where the deformation of fluid particles is ignored. It can describe many phenomena that appear in a large number of complex fluids such as the suspensions, animal blood, liquid crystals which cannot be characterized appropriately by the Navier-Stokes system, and that it is important to the scientists working with the hydrodynamic-fluid problems and phenomena. We refer the reader to the monograph \cite{L1999}, which provides a detailed derivation of the micropolar fluid equations from the general constitutive laws, together with an extensive review of the mathematical theory and the applications of this particular model.
In the present paper, we consider the Cauchy problem of three-dimensional (3D for short) compressible viscous and heat-conducting micropolar fluid equations (see \cite[Chapter 1]{L1999})
\begin{align}\label{a1}
\begin{cases}
\rho_t+\divv(\rho u)=0,\\
\rho (u_t+u\cdot\nabla u)=(\lambda+\mu-\mu_r)\nabla\divv u+(\mu+\mu_r)\Delta u-\nabla p+2\mu_r\curl w,\\
j_I\rho(w_t+u\cdot\nabla w)=2\mu_r(\curl u-2w)+(c_0+c_d-c_a)\nabla\divv w+(c_a+c_d)\Delta w,\\
c_v\rho(\theta_t+u\cdot\nabla\theta)=-p\divv u+\kappa\Delta\theta+\mathcal{Q}(\nabla u)+4\mu_r\big|\frac12\curl u-w\big|^2
+c_0(\divv w)^2\\
\qquad\qquad\qquad\qquad+(c_a+c_d)\nabla w:\nabla w^T+(c_d-c_a)\nabla w:\nabla w,
\end{cases}
\end{align}
with the initial condition
\begin{align}\label{a2}
(\rho, u, w, \theta)(x, 0)=(\rho_0, u_0, w_0, \theta_0)(x), \ x\in\mathbb{R}^3,
\end{align}
and the far field behavior
\begin{align}\label{a3}
(\rho, u, w, \theta)(x, t)\rightarrow (0, 0, 0, 0)\ {\rm as}\ |x|\rightarrow \infty.
\end{align}
Here $\rho=\rho(x, t)$, $u=(u^1, u^2, u^3)(x, t)$, $w=(w^1, w^2, w^3)(x, t)$, $\theta=\theta(x, t)$ are
the density, velocity, micro-rotation velocity, and absolute temperature, respectively; $p=R\rho\theta$, with positive constant $R$, is the pressure, and
\begin{align*}
\mathcal{Q}(\nabla u)=\frac{\mu}{2}|\nabla u+(\nabla u)^\top|^2
+\lambda(\divv u)^2,
\end{align*}
with $(\nabla u)^\top$ being the transpose of $\nabla u$.
The constants $c_v$, $\kappa$, and $j_I$ are all positive. $\mu$ and $\lambda$ are the coefficients of viscosity, while $\mu_r$, $c_0$, $c_d$, and $c_a$ are the coefficients of micro-viscosity, and they satisfy
\begin{align}\label{1.4}
\mu>0,\ 3\lambda+2\mu\ge 0, \ \mu_r>0,\
c_d>0,\ 3c_0+2c_d\ge 0,\ c_a>0.
\end{align}

It should be noted that the system \eqref{a1} reduces to the full compressible Navier-Stokes equations when the rotation effect of micro-particles $w$ is ignored, which have been discussed in numerous studies on the existence, uniqueness, and regularity of solutions. Feireisl \cite{F2004} proved the global existence of the so-called ``variational solutions" with a temperature dependent coefficient of heat conduction in the sense that the energy equation is replaced by an energy inequality. Huang and Li \cite{HL18} derived global well-posedness of strong solutions in $\mathbb{R}^3$ with non-vacuum at infinity which are of small energy but possibly large oscillations. Later on, Wen and Zhu \cite{WZ17} improved the result \cite{HL18} to the case of far-field vacuum provided that the initial mass is properly small in certain sense. Meanwhile, Li \cite{JL20} obtained a new type of global strong solutions under some smallness condition on the scaling invariant quantity. Liang \cite{ZL21} proved global existence and decay rates of strong solutions when the initial energy is small enough.

Let's turn our attention to the compressible heat-conducting micropolar fluid equations \eqref{a1}. Global well-posedness of classical solutions in one dimension with arbitrary large initial data was obtained by Feng and Zhu \cite{FZ19} for the case that the coefficient of heat conduction is a temperature-dependent function, and thereafter extended by Zhang and Zhu \cite{ZZ20} to the Robin boundary conditions case,
see also \cite{D18} for some related result. We should point out that the temperature-dependent coefficient of heat conduction plays a crucial role in their proof. Recently, Wan and Zhang \cite{WZ20} studied 1D initial-boundary value problem with constant coefficients and vacuum. Based on Lagrangian coordinates and the energy method, they showed the global existence of a unique solution in $H^2$. For 3D case, Liu and Zhang \cite{LZ20} deduced the global existence and $L^2$-decay rate of classical solutions for small perturbed initial data around the equilibrium sates in some Sobolev spaces of high order. Later on, Liu-Huang-Zhang \cite{LHZ21} established global well-posedness and optimal convergence rates of strong solutions with zero heat conductivity for the initial data close to the constant equilibrium state in $H^3$ and bounded in $L^1$. Wu and Jiang \cite{WJ21} obtained the pointwise space-time estimates of classical solutions through the analysis of the Green's function. It should be mentioned that the results obtained in \cite{LZ20,WJ21,LHZ21} require that the solution has small oscillations from a uniform non-vacuum state so that the density is strictly away from vacuum and the gradient of the density remains bounded uniformly in time. on the other hand, when vacuum is allowed, some new challenging difficulties arise, such as degeneracy of the system. Nevertheless, some important progress on the global existence of strong solutions has been achieved recently. More precisely, Huang-Liu-Zhang \cite{HLZ21} investigated the global existence and uniqueness of strong solutions to the Cauchy problem for \eqref{a1} in $\mathbb{R}^3$ provided that the initial energy is properly small. Vacuum is allowed interiorly but not at infinity in \cite{HLZ21}. Very recently, Liu and Zhong \cite{LZ22} extended it to the case of allowing vacuum at infinity by requiring the initial mass small enough. The aim of the present paper is to improve the result \cite{LZ22}. This is a nontrivial generalization, since our main result shows that the strong solution exists globally in time if some smallness assumption independent of any norms of the initial data except $\|\rho_0\|_{L^1}$ holds true. This is exactly the new point of this paper. There are also some interesting mathematical results concerning the compressible heat-conducting micropolar fluid equations with spherical or cylindrical symmetry, please refer to \cite{D17,D181,D182,DM15,DMC17,DSM16,HD18,HD19,MSD17}.

For $1\le p\le \infty$ and integer $k\ge 0$, we denote the standard homogeneous and inhomogeneous
Sobolev spaces as follows:
\begin{align*}
\begin{cases}
L^p=L^p(\mathbb{R}^3),~ ~ W^{k, p}=L^p\cap D^{k, p},~~ H^k=W^{k, 2}, \\
  D^{k, p}=\{u\in L_{loc}^1(\mathbb{R}^3): \|\nabla^ku\|_{L^p}<\infty\}, ~D^k=D^{k, 2},\\
  D_0^1=\{u\in L^6(\mathbb{R}^3): \|\nabla u\|_{L^2}<\infty\}.
\end{cases}
\end{align*}

We can now state our main result.
\begin{theorem}\label{thm1}
In addition to \eqref{1.4}, assume that
\begin{align}\label{1.5}
2\mu>\lambda+\frac{\mu_r}{2},\ 2c_d>c_0+\frac{c_a}{2}.
\end{align}
For $q\in (3, 6)$, let the initial data $(\rho_0, u_0, w_0, \theta_0\ge 0)$ satisfy
\begin{align}\label{1.6}
0\leq\rho_0\le \bar{\rho},~\rho_0\in L^1\cap H^1
\cap W^{1, q}, ~(u_0, \theta_0)\in D_0^1\cap D^2,\ w_0\in H^2,
\end{align}
and the compatibility condition
\begin{align}\label{1.7}
\begin{cases}
(\lambda+\mu-\mu_r)\nabla\divv u_0+(\mu+\mu_r)\Delta u_0-R\nabla(\rho_0\theta_0)+2\mu_r\curl w_0=\sqrt{\rho_0}g_1,\\
2\mu_r(\curl u_0-2w_0)+(c_0+c_d-c_a)\nabla \divv w_0+(c_a+c_d)\Delta w_0=\sqrt{\rho_0}g_2,\\
-R\rho_0\theta_0\divv u_0+\lambda(\divv u_0)^2+\frac{\mu}{2}|\nabla u_0+\nabla u_0^T|^2+4\mu_r\big|\frac12\curl u_0-w_0\big|^2\\
\quad+c_0(\divv w_0)^2+(c_a+c_d)\nabla w_0:\nabla w_0^T+(c_d-c_a)\nabla w_0:\nabla w_0+\kappa\Delta\theta_0=\sqrt{\rho_0}g_3,
\end{cases}
\end{align}
with $g_i\in L^2\ (i\in\{1,2,3\})$. Then there exists a positive
constant $\varepsilon_0$ depending only on  $\mu$, $\lambda$, $\mu_r$, $c_0$, $c_a$, $c_d$, $j_I$, $c_v$, $R$, $\kappa$, and $\|\rho_0\|_{L^1}$,
 such that if
\begin{align}\label{1.8}
N_0\triangleq\bar{\rho}\big[\|\rho_0\|_{L^3}+\bar{\rho}^2
\big(\|\sqrt{\rho_0}u_0\|_{L^2}^2+\|\sqrt{\rho_0}w_0\|_{L^2}^2\big)\big]
\big(\|\nabla u_0\|_{L^2}^2+\|w_0\|_{H^1}^2+
\bar{\rho}\|\sqrt{\rho_0}\Psi_0\|_{L^2}^2\big)\le\varepsilon_0,
\end{align}
where $\Psi_0\triangleq\frac{|u_0|^2}{2}+\frac{j_I|w_0|^2}{2}+c_v\theta_0$,
the problem \eqref{a1}--\eqref{a3} has a unique global strong solution $(\rho, u, w, \theta)$ satisfying, for any $T>0$,
\begin{align}\label{trr}
\begin{cases}
\rho \in C([0, T]; L^1\cap H^1\cap W^{1, q}), \ \rho_t\in C([0, T]; L^2\cap L^q),\\
(u, \theta)\in C([0, T]; D_0^1\cap D^2)\cap L^2([0, T]; D^{2, q}),\\
w\in C([0, T]; H^2)\cap L^2([0, T]; D^{2, q}),\\
(\nabla u_t, \nabla w_t, \nabla \theta_t)\in L^2([0, T]; L^2), \\
 (\sqrt{\rho}u_t, \sqrt{\rho}w_t, \sqrt{\rho}\theta_t)\in L^\infty([0, T]; L^2).
\end{cases}
\end{align}
\end{theorem}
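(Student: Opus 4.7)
The plan is the standard local-existence-plus-continuation scheme of Huang--Li--Xin type, adapted to this micropolar system with heat conduction and far-field vacuum. First, I would invoke a local existence and uniqueness result for strong solutions to \eqref{a1}--\eqref{a3} under \eqref{1.6}--\eqref{1.7}; such a result can be proved by linearization and the compatibility condition exactly as in \cite{HLZ21,LZ22}, giving a solution in the class \eqref{trr} on some $[0, T_*]$. The heart of the argument is then to derive a priori estimates on $[0, T]$, uniform in $T$, under the smallness assumption \eqref{1.8}, which via continuation promote the local solution to a global one. I would set up an auxiliary functional
\[
A(T) \triangleq \sup_{0 \le t \le T}\bigl(\|\nabla u\|_{L^2}^2 + \|w\|_{H^1}^2 + \bar\rho \|\sqrt\rho \Psi\|_{L^2}^2\bigr) + \int_0^T (\cdots)\,dt
\]
measuring the "first level" of regularity, together with a pointwise density bound $\sup_{t}\|\rho\|_{L^\infty} \le 2\bar\rho$, and close these by a continuity/bootstrap argument: if $A(T) \le 2 A(0)$ and $\|\rho\|_{L^\infty} \le 2\bar\rho$ on $[0, T]$, then under \eqref{1.8} these can be improved to $A(T) \le 3A(0)/2$ and $\|\rho\|_{L^\infty} \le 7\bar\rho/4$.

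The a priori estimates proceed in stages. First I would derive the basic energy identity by multiplying the momentum equation by $u$, the micro-rotation equation by $w$, and using the temperature equation to obtain conservation of total energy $\int \rho \Psi$; combined with the dissipative structure guaranteed by \eqref{1.4} and \eqref{1.5} (the latter ensuring positive definiteness of the symmetric part of the viscous bilinear form after absorbing the coupling terms $2\mu_r\operatorname{curl} w$ and $4\mu_r|\tfrac12\operatorname{curl} u - w|^2$), this yields the $L^2$ dissipation of $\nabla u$, $\nabla w$, $w$, and $\nabla\theta$. The crucial nonlinear step is the $L^\infty$ bound on $\rho$: following Hoff's effective viscous flux technique, introduce $F \triangleq (\lambda + 2\mu)\divv u - p$ and exploit the transport equation for $\rho$ via Zlotnik's inequality; the micropolar corrections contribute additional terms involving $\operatorname{curl} w$ which must be absorbed using estimates on $\|\nabla w\|_{L^2\cap L^6}$ obtained by testing the $w$-equation with $\Delta w$. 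Then I would estimate $\|\sqrt\rho \dot u\|_{L^2}$, $\|\sqrt\rho \dot w\|_{L^2}$, $\|\sqrt\rho \dot\theta\|_{L^2}$ by testing the momentum, micro-rotation, and temperature equations with the corresponding material derivatives; each of these produces a Grönwall-type inequality whose "small" driving term is controlled by $N_0$ times high norms, which is where the smallness \eqref{1.8} is consumed.

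Once the low-order estimates close, I would pass to higher order bounds: $\|\nabla^2 u\|_{L^2\cap L^q}$, $\|\nabla^2 w\|_{L^2\cap L^q}$, $\|\nabla^2\theta\|_{L^2}$, and then $\|\rho\|_{H^1\cap W^{1,q}}$ via the transport equation for $\nabla\rho$ (which requires $\nabla u \in L^1_t L^\infty_x$, obtained from $\nabla^2 u \in L^2_t L^q_x$ with $q > 3$ plus interpolation). Time-derivative estimates $\|\nabla u_t\|_{L^2_t L^2_x}$, etc., follow from differentiating the equations in $t$, using the compatibility condition \eqref{1.7} to give meaning to the initial values of $\dot u$, $\dot w$, $\dot\theta$ in $L^2(\sqrt\rho\,dx)$. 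Uniqueness in the class \eqref{trr} follows by a standard energy estimate on the difference of two solutions.

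The main obstacle, as in \cite{HL18} for Navier--Stokes and \cite{HLZ21,LZ22} for the micropolar system, is closing the density bound together with the $\|\sqrt\rho \dot u\|_{L^2}$ estimate without any pointwise lower bound on $\rho$ (vacuum at infinity) and without smallness of $\bar\rho$ or of the initial velocity. The new contribution over \cite{LZ22} is replacing smallness of the initial mass by smallness of the scaling-invariant quantity $N_0$ in \eqref{1.8}; technically this forces one to track powers of $\bar\rho$ and $\|\rho_0\|_{L^3}$ very carefully in the Gagliardo--Nirenberg and Sobolev inequalities on $\mathbb{R}^3$ (using $\|f\|_{L^6} \lesssim \|\nabla f\|_{L^2}$ for $f \in D_0^1$, and $\|\rho\|_{L^3}^{1/2}$-type weights coming from $\int \rho |u|^4 \le \|\rho\|_{L^3}\|u\|_{L^6}^3 \|u\|_{L^2}$). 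The temperature equation brings further difficulty because of the six dissipative quadratic sources on its right-hand side; controlling $\int \rho \theta^2$ and $\|\nabla\theta\|_{L^2}$ uniformly in time despite the cubic nonlinearity $p\divv u = R\rho\theta\divv u$ (which is not small) is the delicate point, handled by testing with $\theta$ and absorbing $R\rho\theta\divv u$ via Cauchy--Schwarz against $\kappa\|\nabla\theta\|_{L^2}^2$ and the $\|\sqrt\rho \dot u\|_{L^2}$ bound.
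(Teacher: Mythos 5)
Your overall skeleton (local existence, bootstrap on a low--order functional plus a density bound, continuation) matches the paper, but the core analytic mechanism that makes the smallness of $N_0$ work is missing, and one step as stated is false.

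First, the claim that the basic energy identity ``yields the $L^2$ dissipation of $\nabla u$, $\nabla w$, $w$, and $\nabla\theta$'' cannot be right. The total energy $\int\rho\Psi\,dx$ is exactly conserved (the viscous dissipation in the momentum and micro--rotation equations cancels against the sources in the temperature equation), so it provides no dissipation at all; and the kinetic--energy estimate alone leaves the term $R\int\rho\theta\operatorname{div}u\,dx$ uncontrolled. With vacuum at infinity there is no entropy inequality to fall back on. The paper's resolution (following Li's scaling--invariant approach) is two--tiered: Lemma \ref{l31} gives dissipation of $\nabla u$, $w$ in $H^1$ only modulo $\int_0^T\|\rho\|_{L^3}^2\|\nabla\theta\|_{L^2}^2dt$, and Lemma \ref{l32} then recovers $\int_0^T\|\nabla\theta\|_{L^2}^2dt$ by testing the conservative total--energy equation with $\Psi$ itself and, crucially, testing the momentum and micro--rotation equations with $|u|^2u$, $|w|^2w$, $|w|^2u$, $|u|^2w$ to produce the dissipation terms $\||u||\nabla u|\|_{L^2}^2$, $\||w||\nabla w|\|_{L^2}^2$, $\||u||\nabla w|\|_{L^2}^2$, $\||w||\nabla u|\|_{L^2}^2$. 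Your alternative --- testing the temperature equation with $\theta$ and absorbing $R\rho\theta^2\operatorname{div}u$ by Cauchy--Schwarz against $\kappa\|\nabla\theta\|_{L^2}^2$ --- does not close: that cubic term is bounded by $\bar\rho^{1/2}\|\sqrt\rho\theta\|_{L^2}\|\nabla\theta\|_{L^2}\|\nabla u\|_{L^3}$ (or similar), and absorbing it requires smallness of quantities that are not small under \eqref{1.8} and are only controlled \emph{after} the $\dot u$ estimates, which themselves need $\int\|\nabla\theta\|_{L^2}^2dt$ first. The ordering is circular in your scheme.

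Second, and relatedly, the weighted dissipation quantities $\||u||\nabla u|\|_{L^2}^2$ etc.\ are not a side remark: they are what controls $\int\rho^2\theta^2|u|^2dx$ (via \eqref{3.16}), the convection terms in the $\|\sqrt\rho u_t\|_{L^2}$ estimate of Lemma \ref{l34}, and the quantity $\|\sqrt\rho|u|^2\|_{L^2}$ entering the $L^3$ and $L^\infty$ density bounds of Lemmas 3.3 and \ref{l35}. Your proposal never produces them. This is also where hypothesis \eqref{1.5} is actually used --- to make the quadratic form arising from the multiplier $|u|^2u$ (respectively $|w|^2w$) positive definite, cf.\ \eqref{3.7} --- not, as you state, to ensure coercivity of the basic energy, for which \eqref{1.4} alone suffices. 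Without these ingredients the bootstrap on $N_T$ cannot be closed, so the proposal as written has a genuine gap at the heart of the argument. (The remaining stages --- effective viscous fluxes, the Beale--Kato--Majda logarithmic estimate for $\nabla\rho$, higher--order bounds, and the continuation argument using \eqref{1.7} --- are consistent with the paper.)
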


\begin{remark}
Our Theorem \ref{thm1} generalizes the main result in \cite{LZ22} in the sense that the initial mass could be large.
\end{remark}

\begin{remark}
It should be noted that our smallness condition \eqref{1.8} is independent of any norms of the initial data except $\|\rho_0\|_{L^1}$, which is in sharp contrast to \cite{HLZ21} where they established global strong solution under small initial energy depending on the initial velocity, micro-rotation velocity, and temperature in some sense.
\end{remark}

\begin{remark}
The conclusion in Theorem \ref{thm1} generalizes global strong solutions showed in \cite{LZ20,WJ21} to the case of large oscillations since the smallness of the mean-square norm of $\big(\sqrt{\rho_0}u_0,\sqrt{\rho_0}w_0,\sqrt{\rho_0}\theta_0\big)$ implies \eqref{1.8}. In addition, both the initial density and the initial temperature are allowed to vanish at infinity.
\end{remark}

\begin{remark}
Very recently, Li and Zheng \cite{LJZ22} investigated well-posedness theory of compressible heat-conducting Navier-Stokes equations without initial compatibility condition. That is, the compatibility condition \eqref{1.7} may not be inevitable or can be released. We will put this question for future studies.
\end{remark}

\begin{remark}
The condition \eqref{1.5} is only used in obtaining estimates in Lemma \ref{l32}, and it could be relaxed by delicate analysis.
\end{remark}

We now make some comments on the analysis for Theorem \ref{thm1}.
The key issue in this paper is to derive both the
time-independent upper bound for the density and the time-dependent higher norm estimates of the solution $(\rho,u,w,\theta)$. First, due to the structure of \eqref{a1}, the basic energy estimate does not provide any dissipation estimate of the velocity and micro-rotation velocity. To overcome this difficulty, with the help of the entropy inequality and the conservation of mass, the authors \cite{HLZ21,LZ22} recovered some useful dissipation estimates in the cases of non-vacuum and vacuum at infinity, respectively. It should be noticed that the entropy inequality (an important tool in \cite{HLZ21}) only holds for a non-vacuum far field and the finiteness of mass is crucial in \cite{LZ22}. Motivated by \cite{JL20}, we obtain an useful basic inequality in terms of $\int\|\rho\|_{L^3}^2\|\nabla\theta\|_{L^2}^2dt$ (see Lemma \ref{l31}). Next, we attempt to deduce the estimates on the $L^\infty(0,T;L^2)$-norm of $\nabla u$ and $\nabla w$, which relies on
bounds of $\int_0^T\|(|u||\nabla u|,|u||\nabla w|,|w||\nabla u|,|w||\nabla w|)\|_{L^2}^2dt$ (see Lemma \ref{l34}).
Based on the temperature equation in the conservative form for the total energy $\rho\Psi=\rho\big(\frac{|u|^2}{2}+\frac{j_I|w|^2}{2}+c_v\theta\big)$,
we succeed in deriving the desired estimate from the momentum and angular momentum equations by choosing suitable multipliers (see Lemma \ref{l32}). Having all these estimates at hand, we can get time-independent estimates of the density and the quantity $N_T$ (its expression is given in Lemma \ref{l36}) provided that $N_0$ is suitably small (see Lemma \ref{l37}). Next, the main step is to bound the gradient of temperature.
By the basic estimates of the material derivatives of both the velocity and the micro-rotation velocity, we obtain $L^\infty(0,T;L^2)$-norm of $\nabla \theta$ (see Lemma \ref{l41}). Then, one can show higher order estimates of the temperature by careful analysis on the material derivative (see Lemma \ref{l42}). Finally, based on a Beale-Kato-Majda type inequality (see Lemma \ref{l24}) and the \textit{a priori} estimates we have just derived, we deduce the bounds on the $L^\infty(0,T;L^2\cap L^q)$-norm of $\nabla \rho$ and higher order estimates of the velocity and the micro-rotation velocity (see Lemma \ref{lz}).

The rest of the paper is organized as follows. In Section \ref{sec2}, we recall some known facts and elementary inequalities which will be used later. Sections \ref{sec3} and \ref{sec4} are devoted to obtaining the global {\it a priori} estimates. Finally, we give the proof of Theorem \ref{thm1} in Section \ref{sec5}.

\section{Preliminaries}\label{sec2}
In this section, we will recall some known facts and elementary inequalities which will be used later.

We begin with the local existence and uniqueness of strong solutions to the problem \eqref{a1}--\eqref{a3}, whose proof can be performed
by using similar strategies as those in \cite{CK120}.
\begin{lemma}\label{l21}
Assume that $(\rho_0, u_0, w_0,\theta_0)$ satisfies \eqref{1.6} and \eqref{1.7}, then there is a small $T_*>0$ such that the Cauchy problem \eqref{a1}--\eqref{a3} admits a unique solution $(\rho, u, w, \theta)$
in $\mathbb{R}^3\times[0, T_*]$. Furthermore, we have
\begin{align}\label{w1}
&\sup_{0\le t\le T_*}\big(\|\nabla u\|_{H^1}^2+\|\nabla\theta\|_{H^1}^2+\|w\|_{H^2}^2
+\|\sqrt{\rho}\dot{u}\|_{L^2}^2+\|\sqrt{\rho}\dot{\theta}\|_{L^2}^2
+\|\rho\dot{w}\|_{L^2}^2\big)\nonumber\\
&\quad +\int_0^{T_*}\big(\|\nabla\dot{u}\|_{L^2}^2+\|\nabla\dot{w}\|_{L^2}^2
+\|\nabla\dot{\theta}\|_{L^2}^2\big)dt\le M_*,
\end{align}
where the constant $M_*>1$ depends on $T_*$ and the initial data.
\end{lemma}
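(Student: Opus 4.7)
The plan is to adapt the Cho--Kim-type strategy (cf.\ \cite{CK120}) to the micropolar system, combining a density regularization with an iterative linearization. First I would regularize the initial density by setting $\rho_0^\eta = \rho_0 + \eta$ for small $\eta>0$ so that the problem becomes strictly non-vacuum, adjust $(u_0,w_0,\theta_0)$ via the elliptic systems in \eqref{1.7} so that the compatibility conditions persist for the regularized data, and show that the bounds are uniform in $\eta$; the original data is recovered as $\eta\to 0^+$.

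For fixed $\eta$, I would construct solutions by the Picard-type iteration in which, given $(\rho^{k-1},u^{k-1},w^{k-1},\theta^{k-1})$, the new density $\rho^k$ is obtained from the linear transport equation $\rho^k_t+\divv(\rho^k u^{k-1})=0$ by the method of characteristics (preserving positivity and the pointwise bound), and the new $(u^k,w^k,\theta^k)$ are obtained by solving three linear parabolic systems with coefficients depending on $(\rho^{k-1},u^{k-1},w^{k-1},\theta^{k-1})$; note that the momentum and micro-rotation equations are linearly coupled through the $\curl$ terms, but can still be treated by standard Galerkin/continuity-method arguments since the leading operators are uniformly elliptic in view of \eqref{1.4}. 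One then establishes uniform-in-$k$ a priori bounds on a short interval $[0,T_*]$ of the form \eqref{w1}, where the initial values of $\sqrt{\rho}\dot u$, $\rho\dot w$, $\sqrt{\rho}\dot\theta$ are controlled precisely by $\|g_1\|_{L^2}$, $\|g_2\|_{L^2}$, $\|g_3\|_{L^2}$ via \eqref{1.7}. Passing to the limit $k\to\infty$ on a short interval gives the solution of the regularized nonlinear system; a contraction estimate in a lower-norm space (say $L^2$ for differences of density and $D_0^1$ for the other differences) yields both convergence of the scheme and uniqueness.

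The core a priori estimates are the engine of the argument. I would multiply the momentum equation by $u_t$, the angular momentum equation by $w_t$, and the energy equation by $\theta_t$, integrate in $x$, and absorb the cubic nonlinearities into the dissipation by Gagliardo--Nirenberg and the $L^\infty$-bound of $\rho$; this yields control of $\|\sqrt{\rho}\dot u\|_{L^2}^2$, $\|\rho\dot w\|_{L^2}^2$, $\|\sqrt{\rho}\dot\theta\|_{L^2}^2$ plus dissipation of $\nabla u$, $\nabla w$, $\nabla\theta$. Then differentiating each equation once in time and testing with $\dot u$, $\dot w$, $\dot\theta$ (or $u_t,w_t,\theta_t$) produces the bounds on $\int_0^{T_*}\|(\nabla\dot u,\nabla\dot w,\nabla\dot\theta)\|_{L^2}^2dt$. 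Higher spatial regularity of $u$, $w$, $\theta$ is recovered from the elliptic systems
\begin{align*}
(\mu+\mu_r)\Delta u+(\lambda+\mu-\mu_r)\nabla\divv u &= \rho\dot u+\nabla p-2\mu_r\curl w,\\
(c_a+c_d)\Delta w+(c_0+c_d-c_a)\nabla\divv w &= j_I\rho\dot w-2\mu_r(\curl u-2w),\\
\kappa\Delta\theta &= c_v\rho\dot\theta+p\divv u-\mathcal{Q}(\nabla u)-\cdots,
\end{align*}
using standard $L^p$-regularity for the Lamé operator. The short time $T_*$ is chosen so that a Gronwall-type argument closes with a constant depending only on $\|g_i\|_{L^2}$ and the initial data.

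The main obstacle will be the degeneracy caused by vacuum when sending $\eta\to 0^+$: once $\rho$ may vanish the terms $\|\sqrt{\rho}\dot u\|_{L^2}$, $\|\rho\dot w\|_{L^2}$, $\|\sqrt{\rho}\dot\theta\|_{L^2}$ are only weighted $L^2$ norms, and $\dot u$, $\dot w$, $\dot\theta$ themselves need not be in $L^2$. To pass to the limit one must exploit the compatibility conditions \eqref{1.7} uniformly in $\eta$, use weak-$*$ compactness for the weighted quantities and strong compactness (via Aubin--Lions) for $\rho$, $\nabla u$, $\nabla w$, $\nabla\theta$, and verify that the limit satisfies \eqref{a1} in the strong sense together with the bound \eqref{w1}. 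Uniqueness in the class \eqref{trr} on $[0,T_*]$ follows by subtracting two solutions, testing the difference equations against the velocity/rotation/temperature differences, and applying Gronwall after controlling the pressure difference by the $L^\infty$-bound of $\rho$ and the dissipation already obtained.
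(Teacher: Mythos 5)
Your proposal follows precisely the Cho--Kim strategy (density regularization $\rho_0+\eta$, linearized iteration, uniform-in-$\eta$ estimates driven by the compatibility conditions \eqref{1.7}, then passage to the limit), which is exactly what the paper does: it gives no proof of Lemma \ref{l21} and simply states that the proof "can be performed by using similar strategies as those in \cite{CK120}". Your outline is therefore consistent with the paper's intended argument and correctly identifies the key technical points (control of $(\sqrt{\rho}\dot u,\rho\dot w,\sqrt{\rho}\dot\theta)$ at $t=0$ via $g_i$, elliptic regularity for the Lam\'e-type operators, and the vacuum degeneracy in the limit $\eta\to0^+$).
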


Next, the following well-known Gagliardo-Nirenberg inequality (see \cite[Chapter 10, Theorem 1.1]{D2016}) will be
used later.
\begin{lemma}\label{l22}
Assume that $f\in D^{1, m}\cap L^r$ with $m,r\geq1$, then there exists a constant $C$ depending only on $q$, $m$, and $r$ such that
\begin{align*}
\|f\|_{L^q}\le C\|\nabla f\|_{L^m}^\vartheta\|f\|_{L^r}^{1-\vartheta},
\end{align*}
where $\vartheta=\big(\frac{1}{r}-\frac{1}{q}\big)/
\big(\frac{1}{r}-\frac{1}{m}+\frac13\big)$ and the admissible range of $q$ is the following:
\begin{itemize}
\item if $m<3$, then $q$ is between $r$ and $\frac{3m}{3-m}$;
\item if $m=3$, then $q\in [r, \infty)$;
\item if $m>3$, then $q\in [r, \infty]$.
\end{itemize}
\end{lemma}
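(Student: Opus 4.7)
The plan is to prove the Gagliardo--Nirenberg inequality in two main stages: first establish an endpoint Sobolev embedding, then interpolate with the $L^r$ norm using H\"older's inequality. A preliminary scaling check with the dilation $f_\lambda(x)=f(\lambda x)$ shows that the claimed inequality is dimensionally consistent only when $\vartheta$ takes exactly the value in the statement; indeed, requiring both sides to transform the same way under $\lambda$ yields
\[
\frac{1}{q}=\vartheta\Bigl(\frac{1}{m}-\frac{1}{3}\Bigr)+\frac{1-\vartheta}{r},
\]
which rearranges to the given formula. This fixes the target exponents and organizes the proof.

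\textbf{Step 1: endpoint Sobolev embedding.} For $m<3$ I would prove $\|f\|_{L^{m^*}}\le C\|\nabla f\|_{L^m}$ with $m^*=3m/(3-m)$. The case $m=1$ follows from Gagliardo's original argument: write $|f(x)|\le\int_{-\infty}^{\infty}|\partial_i f|\,dx_i$ for each $i\in\{1,2,3\}$, take the cube root of the product of the three inequalities, integrate in $x$, and apply the generalized H\"older inequality. For general $m<3$, apply the $m=1$ case to $|f|^s$ with $s$ chosen so that $|\nabla|f|^s|=s|f|^{s-1}|\nabla f|$ is handled by H\"older, and absorb the extra factor of $\||f|^{s-1}\|$ onto the right-hand side to isolate $\|f\|_{L^{m^*}}$. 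The endpoint $m=3$ is borderline: $L^\infty$ fails, but $W^{1,3}\hookrightarrow L^q$ for every finite $q$ can be obtained by the same scheme with slightly different bookkeeping of exponents. For $m>3$, Morrey's inequality gives $W^{1,m}\hookrightarrow L^\infty$ directly.

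\textbf{Step 2: interpolation.} Given the endpoint bound, choose $\vartheta\in[0,1]$ so that $1/q=\vartheta/m^*+(1-\vartheta)/r$ and apply H\"older:
\[
\|f\|_{L^q}\le\|f\|_{L^{m^*}}^{\vartheta}\|f\|_{L^r}^{1-\vartheta}\le C\|\nabla f\|_{L^m}^{\vartheta}\|f\|_{L^r}^{1-\vartheta},
\]
with the obvious modification (using the appropriate $L^\infty$ or large-$L^q$ endpoint) when $m\ge 3$. A short algebraic computation using $1/m^*=1/m-1/3$ shows that this $\vartheta$ agrees precisely with the stated formula.

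The main obstacle is the borderline case $m=3$, where one must replace the non-existent $L^\infty$ endpoint by an $L^q$-bound valid for each large $q$ and keep the constant uniform on compact subintervals of $[r,\infty)$; this forces the interpolation to be performed at a finite auxiliary exponent rather than at infinity. Away from this case the argument is classical, and the admissible range of $q$ stated in the lemma is exactly what the interpolation permits given the range of the underlying Sobolev embedding.
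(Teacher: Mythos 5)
The paper does not prove this lemma at all: it is quoted verbatim as a known result with a citation to DiBenedetto's \emph{Real Analysis} (Chapter 10, Theorem 1.1), so there is no in-paper argument to compare yours against. Your proposal is the classical proof of this special case of the Gagliardo--Nirenberg inequality and is correct in outline: the scaling computation does pin down $\vartheta$ uniquely, the endpoint Sobolev inequality $\|f\|_{L^{m^*}}\le C\|\nabla f\|_{L^m}$ for $m<3$ follows from Gagliardo's slicing argument applied to $|f|^s$, and the H\"older interpolation $1/q=\vartheta/m^*+(1-\vartheta)/r$ reproduces the stated $\vartheta$ via $1/m^*=1/m-1/3$ (and works whether $q$ lies above or below $r$, which is why the lemma says ``between''). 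The one place you should be more careful is the case $m\ge 3$: Morrey's inequality gives the \emph{additive} bound $\|f\|_{L^\infty}\le C\bigl(\|\nabla f\|_{L^m}+\|f\|_{L^r}\bigr)$ on the full space $L^r\cap D^{1,m}$, not the multiplicative bound $\|f\|_{L^\infty}\le C\|\nabla f\|_{L^m}^{\vartheta}\|f\|_{L^r}^{1-\vartheta}$ that your Step 2 needs as an endpoint; you must convert one into the other by the normalization $f\mapsto\lambda f(\mu\,\cdot)$ with $\lambda,\mu$ chosen so that both norms on the right equal one, which is exactly the scaling freedom you invoked at the start but did not explicitly spend here. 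The same remark applies to the borderline case $m=3$, where the auxiliary finite exponent must be chosen as a function of $q$ and the constant tracked; your acknowledgement of this is adequate for a sketch. With those two conversions made explicit, the argument is complete and standard.
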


Next, from the system \eqref{a1}, we have
\begin{align}
\begin{cases}
\rho\dot{u}=\nabla F_1-(\mu+\mu_r)\curl\curl u+2\mu_r\curl w,\\
j_I\rho\dot{w}+4\mu_rw=\nabla F_2-(c_a+c_d)\curl\curl w
+2\mu_r\curl u,
\end{cases}
\end{align}
with
\begin{align}\label{2.3}
F_1\triangleq(2\mu+\lambda)\divv u-p,\ F_2\triangleq(2c_d+c_0)\divv w.
\end{align}
Furthermore, we have the following elliptic system
\begin{align}\label{2.5}
\begin{cases}
\Delta F_1=\divv(\rho\dot{u}),\\
\Delta F_2=j_I{\rm div}(\rho\dot{w})+4\mu_r\divv w,\\
(\mu+\mu_r)\Delta(\curl u)=\curl(\rho\dot{u})-2\mu_r\curl\curl w,\\
(c_a+c_d)\Delta(\curl w)-4\mu_r\curl w=j_I\curl(\rho\dot{w})
-2\mu_r\curl\curl u.
\end{cases}
\end{align}
From the standard $L^r$-estimate of the system \eqref{2.5},
we have the following estimates on $(\rho, u, w, \theta)$,
which can be proved similarly by \cite{HLZ21}.
\begin{lemma}\label{l23}
Suppose that $(\rho, u, w, \theta)$ is a smooth solution to \eqref{a1}--\eqref{a3}, then there exists a positive $C$ such that
the following estimates hold
\begin{align}
\|\nabla u\|_{L^r}&\le C\big(\|F_1\|_{L^r}+\|\rho\theta\|_{L^r}
+\|\curl u\|_{L^r}\big),\\
\|\nabla w\|_{L^r}&\le C\big(\|F_2\|_{L^r}+\|\curl w\|_{L^r}\big),\\
\|\nabla F_1\|_{L^r}+\|\nabla\curl u\|_{L^r}&\le C\big(\|\rho\dot{u}\|_{L^r}+\|\nabla w\|_{L^r}\big),\\
\|\nabla F_2\|_{L^r}+\|\nabla\curl w\|_{L^r}&\le C\big(\|\rho\dot{w}\|_{L^r}+\|\nabla u\|_{L^r}+\|w\|_{L^r}\big),\\
\|\nabla u\|_{L^6}&\le C\big(\|\rho\dot{u}\|_{L^2}+\|\nabla w\|_{L^2}+\|\rho\theta\|_{L^6}\big),\\
\|\nabla w\|_{L^6}&\le C\big(\|\rho\dot{w}\|_{L^2}+\|\nabla u\|_{L^2}+\|w\|_{L^2}\big).
\end{align}
where $2\leq r\leq6$.
\end{lemma}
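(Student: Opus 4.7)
The plan is to establish the six inequalities by exploiting the standard elliptic identity $-\Delta v = \curl\curl v - \nabla\divv v$ on $\mathbb{R}^3$ together with Calderón--Zygmund $L^r$ theory applied to the Poisson/Helmholtz structure revealed in \eqref{2.5}. Throughout, the underlying fact is that on $\mathbb{R}^3$, for any $v\in D_0^1$ and $1<r<\infty$,
\begin{align*}
\|\nabla v\|_{L^r}\le C\bigl(\|\divv v\|_{L^r}+\|\curl v\|_{L^r}\bigr),
\end{align*}
and that $\nabla\Delta^{-1}\divv$ and $\nabla\Delta^{-1}\curl$ are bounded on $L^r$, since their Fourier symbols $-\xi\otimes\xi/|\xi|^2$ and $i\xi\wedge\xi/|\xi|^2$ are classical CZ multipliers. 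For the first two inequalities (controlling $\nabla u$, $\nabla w$), I would simply apply the above identity to $u$ and $w$ and substitute $\divv u=(F_1+p)/(2\mu+\lambda)$ and $\divv w=F_2/(2c_d+c_0)$ from the definition \eqref{2.3}.

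Next, for the two inequalities on $\nabla F_1$ and $\nabla\curl u$, I would invert the first and third equations of \eqref{2.5} by $\nabla\Delta^{-1}$. The identity $\Delta F_1=\divv(\rho\dot u)$ immediately yields $\|\nabla F_1\|_{L^r}\le C\|\rho\dot u\|_{L^r}$ via the CZ bound for $\nabla\Delta^{-1}\divv$. For $\nabla\curl u$, rewriting $\curl\curl w=\nabla\divv w-\Delta w$ shows that $\nabla\Delta^{-1}\curl\curl w$ is controlled by $\|\nabla w\|_{L^r}$ (the $\nabla\Delta^{-1}\nabla\divv$ piece being a CZ operator acting on $w$ followed by a derivative, the $\nabla\Delta^{-1}\Delta w$ piece reducing to $\nabla w$ modulo Riesz transforms); combined with the CZ bound for $\nabla\Delta^{-1}\curl$ on $\rho\dot u$, this gives the third inequality.

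The analogous estimates for $\nabla F_2$ and $\nabla\curl w$ are subtler because the fourth equation of \eqref{2.5} is a \emph{Helmholtz} equation $(c_a+c_d)\Delta(\curl w)-4\mu_r\curl w=j_I\curl(\rho\dot w)-2\mu_r\curl\curl u$, and this lower-order term will be the main technical point. I would invert it by the Fourier multiplier $m(\xi)=i\xi/(-(c_a+c_d)|\xi|^2-4\mu_r)$ composed with the appropriate $\curl$/$\curl\curl$, verify that the resulting symbols are bounded and satisfy Mikhlin's condition (the symbols $\xi_j\xi_k/((c_a+c_d)|\xi|^2+4\mu_r)$ behave at infinity like the ordinary Riesz multipliers and are smooth at the origin), and thereby obtain $\|\nabla\curl w\|_{L^r}\le C(\|\rho\dot w\|_{L^r}+\|\nabla u\|_{L^r})$. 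For $\nabla F_2$, the Poisson equation $\Delta F_2=j_I\divv(\rho\dot w)+4\mu_r\divv w$ gives $\|\nabla F_2\|_{L^r}\le C(\|\rho\dot w\|_{L^r}+\|w\|_{L^r})$, where the zero-order bound $\|w\|_{L^r}$ (rather than $\|\nabla w\|_{L^r}$) appears precisely because $\nabla\Delta^{-1}\divv$ is a bounded operator on $L^r$; summing yields the fourth inequality.

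Finally, the two $L^6$ estimates follow by applying \eqref{2.6}--\eqref{2.7} with $r=6$ and invoking the Sobolev embedding $\|f\|_{L^6(\mathbb{R}^3)}\le C\|\nabla f\|_{L^2}$ (valid for $F_1,F_2,\curl u,\curl w\in D_0^1$ under the far-field condition \eqref{a3}) together with the $L^2$ versions of the third and fourth inequalities just derived. For example, $\|\nabla u\|_{L^6}\le C(\|F_1\|_{L^6}+\|\rho\theta\|_{L^6}+\|\curl u\|_{L^6})\le C(\|\nabla F_1\|_{L^2}+\|\nabla\curl u\|_{L^2}+\|\rho\theta\|_{L^6})\le C(\|\rho\dot u\|_{L^2}+\|\nabla w\|_{L^2}+\|\rho\theta\|_{L^6})$, and analogously for $w$. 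The only conceptually delicate step is the Helmholtz inversion in the bound for $\nabla\curl w$; all other steps are routine applications of classical singular integral theory.
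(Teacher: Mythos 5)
Your proof is correct and is essentially the argument the paper intends: the paper gives no details, simply invoking the ``standard $L^r$-estimate of the system \eqref{2.5}'' and the reference \cite{HLZ21}, and your write-up is a faithful elaboration of exactly that route (the identity $\|\nabla v\|_{L^r}\le C(\|\divv v\|_{L^r}+\|\curl v\|_{L^r})$ plus Calder\'on--Zygmund bounds for $\nabla\Delta^{-1}\divv$ and $\nabla\Delta^{-1}\curl$ applied to \eqref{2.5}, followed by Sobolev embedding for the $L^6$ bounds). The only remark worth making is that your Mikhlin analysis of the Helmholtz operator $(c_a+c_d)\Delta-4\mu_r$ can be bypassed: moving $4\mu_r\curl w$ to the right-hand side and noting that $\nabla\Delta^{-1}\curl$ is a zero-order CZ operator acting on $w$ itself yields precisely the stated bound with the $\|w\|_{L^r}$ term.
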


Finally, the following Beale-Kato-Majda type inequality (see \cite[Lemma 2.3]{HLX20112}) will be used to estimate $\|\nabla u\|_{L^\infty}$.
\begin{lemma}\label{l24}
Suppose that $\nabla u\in L^2\cap W^{1, r}$ for some $r\in (3, \infty)$, then there exists a constant $C$ depending only on $r$ such that
\begin{align}
\|\nabla u\|_{L^\infty}\le C\big(\|\divv u\|_{L^\infty}+\|\curl u\|_{L^\infty}\big)\ln\big(e+\|\nabla^2u\|_{L^r}\big)
+C\big(1+\|\nabla u\|_{L^2}\big).
\end{align}
\end{lemma}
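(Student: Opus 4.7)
The idea is to exploit the Helmholtz representation $-\Delta u=\curl\curl u-\nabla\divv u$, which yields a formula $\nabla u=\mathcal{R}_1(\divv u)+\mathcal{R}_2(\curl u)$ for a pair of zeroth-order Riesz-type singular integral operators $\mathcal{R}_1,\mathcal{R}_2$. These operators are bounded on every $L^p$ with $1<p<\infty$, but they fail on $L^\infty$; the logarithm in the statement is the sharp replacement, produced by a dyadic Littlewood--Paley decomposition combined with Bernstein inequalities.

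\textbf{Dyadic splitting and scale-by-scale estimates.} Fix an inhomogeneous Littlewood--Paley decomposition $u=S_0 u+\sum_{j\ge 0}\Delta_j u$, where $S_0 u$ is frequency-localized on $|\xi|\le 1$ and $\Delta_j u$ on $|\xi|\sim 2^j$. For a threshold $J\in\mathbb{N}$ to be chosen below, write
$$\|\nabla u\|_{L^\infty}\le\|\nabla S_0 u\|_{L^\infty}+\sum_{0\le j\le J}\|\nabla\Delta_j u\|_{L^\infty}+\sum_{j>J}\|\nabla\Delta_j u\|_{L^\infty}.$$
(i) For the low piece, Bernstein gives $\|\nabla S_0 u\|_{L^\infty}\le C\|S_0\nabla u\|_{L^2}\le C\|\nabla u\|_{L^2}$. (ii) For the intermediate range, the symbol of $\nabla\circ\mathcal{R}_k$ composed with $\widehat{\Delta_j}$ is, by scaling, the Fourier transform of a Schwartz kernel with $L^1$-norm independent of $j$; consequently $\|\nabla\Delta_j u\|_{L^\infty}\le C(\|\divv u\|_{L^\infty}+\|\curl u\|_{L^\infty})$ uniformly in $j$, and summing yields $C(J+1)(\|\divv u\|_{L^\infty}+\|\curl u\|_{L^\infty})$. (iii) For the high part, Bernstein again gives
$$\|\nabla\Delta_j u\|_{L^\infty}\le C 2^{3j/r}\|\nabla\Delta_j u\|_{L^r}\le C 2^{j(3/r-1)}\|\nabla^2 u\|_{L^r},$$
and since $r>3$ the exponent $3/r-1$ is strictly negative, so $\sum_{j>J}\|\nabla\Delta_j u\|_{L^\infty}\le C\,2^{J(3/r-1)}\|\nabla^2 u\|_{L^r}$.

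\textbf{Optimization and main obstacle.} Choose $J$ as the least integer satisfying $2^J\ge(e+\|\nabla^2 u\|_{L^r})^{1/(1-3/r)}$. The high-frequency tail then contributes $O(1)$, while $J+1\le C\ln(e+\|\nabla^2 u\|_{L^r})$ converts the intermediate piece into the claimed logarithmic factor; reassembling (i)--(iii) produces the stated inequality. The main technical point is step (ii): the uniform-in-$j$ $L^\infty\to L^\infty$ bound for the frequency-localized Riesz-type operators is what prevents the logarithm from degrading to a power, and it rests precisely on the scale-invariance of the $L^1$ norm of the dilated kernel. The remaining steps are routine Bernstein/Sobolev-embedding computations.
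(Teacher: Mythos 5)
Your argument is correct and complete: the low/intermediate/high frequency split, the uniform-in-$j$ $L^1$ bound for the kernels of the frequency-localized degree-zero multipliers coming from $-\Delta u=\curl\curl u-\nabla\divv u$, the Bernstein estimate $\|\nabla\Delta_j u\|_{L^\infty}\le C2^{j(3/r-1)}\|\nabla^2u\|_{L^r}$ with $3/r-1<0$, and the choice of the cut-off index $J\sim\ln(e+\|\nabla^2u\|_{L^r})$ all fit together to give exactly the stated inequality with a constant depending only on $r$. Note, however, that the paper does not prove this lemma at all: it is quoted verbatim from \cite[Lemma 2.3]{HLX20112}, where the proof is the classical Beale--Kato--Majda physical-space argument — one represents $\nabla u$ through second derivatives of the Newtonian potential acting on $\divv u$ and $\curl u$, splits the singular-integral kernel into the regions $|x-y|<\delta$, $\delta<|x-y|<1$, and $|x-y|>1$, bounds the near field by $\delta^{1-3/r}\|\nabla^2u\|_{L^r}$, the far field by $\|\nabla u\|_{L^2}$, and the intermediate annulus by $\ln(1/\delta)$ times the $L^\infty$ norms, and then optimizes in $\delta$. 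Your Littlewood--Paley proof is the exact Fourier-side counterpart of that kernel splitting (your threshold $2^{-J}$ plays the role of $\delta$), so it buys a cleaner, more modular derivation in which the logarithm visibly arises from counting dyadic scales, at the cost of invoking the Littlewood--Paley/Bernstein machinery rather than elementary potential estimates. Either route is acceptable; only a minor remark: one should say a word about why the inhomogeneous decomposition of $\nabla u$ converges (e.g.\ because $\nabla u\in L^2$ so $\nabla u=S_0\nabla u+\sum_{j\ge0}\Delta_j\nabla u$ in $L^2$), but this is routine.
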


\section{Time independent lower order \textit{a priori} estimates}\label{sec3}
In this and following sections, we will establish some necessary {\it a priori} bounds for
smooth solutions to the Cauchy problem \eqref{a1}--\eqref{a3} to extend the local strong
solutions guaranteed by Lemma \ref{l21}. Thus, let $T>0$ be a fixed time and
$(\rho, u, w, \theta)$ be the smooth solution to \eqref{a1}--\eqref{a3} in $\mathbb{R}^3\times(0, T]$ with initial data
$(\rho_0, u_0, w_0, \theta_0)$ satisfying \eqref{1.6} and \eqref{1.7}. In what follows, $C$ denotes a generic constant which relies only on  $\mu$, $\lambda$, $\mu_r$, $c_0$, $c_a$, $c_d$, $j_I$, $c_v$, $R$, $\kappa$, and $\|\rho_0\|_{L^1}$; $1<C_i<C_{i+1}\ (i=1, 2, \cdots)$ denote different constants. Sometimes we use $C(\alpha)$ to emphasize the dependence of $C$ on $\alpha$. Moreover, for simplicity, we write
\begin{align*}
\int\cdot dx=\int_{\mathbb{R}^3}\cdot dx.
\end{align*}

Firstly, if we multiply $\eqref{a1}_1$ by a cut-off function and then use a standard limit procedure, we can derive that (see for example \cite[Lemma 3.1]{WZ17})
\begin{align}\label{3.1}
\|\rho(t)\|_{L^1}=\|\rho_0\|_{L^1}, \quad 0\leq t\leq T.
\end{align}
Applying the maximum principle (see \cite[p. 43]{F2004}) to \eqref{a1}$_4$ along with $\theta_0\geq0$ shows that
\begin{equation*}
\theta(x,t)\geq0\ \ \text{for any}\ \ (x,t)\in \mathbb{R}^3\times[0,T].
\end{equation*}
Moreover, the non-negativity of the density $\rho(x,t)$ follows from \eqref{a1}$_1$ and $\rho_0\geq0$ (see also \cite[p. 43]{F2004}).

We begin with the following standard energy estimate.
\begin{lemma}\label{l31}
There exists a positive constant $C$ depending only on $R$, $\mu$, $\lambda$, $\mu_r$, $c_0$, $c_a$, and $c_d$ such that
\begin{align}\label{3.3}
&\sup_{0\le t\le T}\big(\|\sqrt{\rho}u\|_{L^2}^2+\|\sqrt{\rho}w\|_{L^2}^2\big)+
\int_0^T\big(\|\nabla u\|_{L^2}^2+\|w\|_{H^1}^2\big)dt\nonumber\\
&\le \|\sqrt{\rho_0}u_0\|_{L^2}^2+\|\sqrt{\rho_0}w_0\|_{L^2}^2
+C\int_0^T\|\rho\|_{L^3}^2\|\nabla\theta\|_{L^2}^2dt.
\end{align}
\end{lemma}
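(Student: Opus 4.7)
The plan is a standard combined energy identity tailored to keep careful track of the micropolar coupling and the pressure-temperature interaction. I would first multiply equation $\eqref{a1}_2$ by $u$ and equation $\eqref{a1}_3$ by $w$, integrate over $\mathbb{R}^3$, and use the transport structure of $\eqref{a1}_1$ to rewrite the material derivatives as
\begin{align*}
\int \rho(u_t+u\cdot\nabla u)\cdot u\,dx = \tfrac{1}{2}\tfrac{d}{dt}\|\sqrt{\rho}u\|_{L^2}^2, \qquad \int j_I\rho(w_t+u\cdot\nabla w)\cdot w\,dx = \tfrac{j_I}{2}\tfrac{d}{dt}\|\sqrt{\rho}w\|_{L^2}^2.
\end{align*}
Integration by parts on the viscous terms produces the usual positive quadratic forms, and the two gyroscopic cross terms $2\mu_r\int (\curl w)\cdot u\,dx$ and $2\mu_r\int (\curl u)\cdot w\,dx$ merge into $4\mu_r\int(\curl u)\cdot w\,dx$ by the self-adjointness identity $\int A\cdot\curl B=\int(\curl A)\cdot B$.

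The next step is to rewrite the dissipation into a manifestly coercive form. Using $\|\nabla f\|_{L^2}^2=\|\curl f\|_{L^2}^2+\|\divv f\|_{L^2}^2$ for $f\in D^1_0$, the left-hand side dissipation after summation becomes
\begin{align*}
(\mu+\mu_r)\|\curl u\|_{L^2}^2 + (\lambda+2\mu)\|\divv u\|_{L^2}^2 + (c_a+c_d)\|\curl w\|_{L^2}^2 + (c_0+2c_d)\|\divv w\|_{L^2}^2 + 4\mu_r\|w\|_{L^2}^2,
\end{align*}
with every coefficient strictly positive by \eqref{1.4}. To absorb the merged cross term I would use a weighted Young inequality
\begin{align*}
4\mu_r\int(\curl u)\cdot w\,dx \le (1+\delta)\mu_r\|\curl u\|_{L^2}^2 + \tfrac{4\mu_r}{1+\delta}\|w\|_{L^2}^2,
\end{align*}
with $\delta>0$ chosen so small that $\mu-\delta\mu_r>0$; this leaves strictly positive residual coefficients for both $\|\curl u\|_{L^2}^2$ and $\|w\|_{L^2}^2$, yielding a coercive bound of the form $c(\|\nabla u\|_{L^2}^2+\|w\|_{H^1}^2)$ on the left.

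The only remaining contribution is the pressure term $-\int\nabla p\cdot u\,dx=R\int\rho\theta\,\divv u\,dx$, which is where the temperature enters. Here I would apply Hölder's inequality together with the Sobolev embedding $\|\theta\|_{L^6}\le C\|\nabla\theta\|_{L^2}$ (which holds since $\theta\in D^1_0$) to obtain
\begin{align*}
R\int\rho\theta\,\divv u\,dx \le R\|\rho\|_{L^3}\|\theta\|_{L^6}\|\nabla u\|_{L^2} \le \tfrac{c}{2}\|\nabla u\|_{L^2}^2 + C\|\rho\|_{L^3}^2\|\nabla\theta\|_{L^2}^2.
\end{align*}
The first summand is absorbed into the coercive dissipation, leaving $C\|\rho\|_{L^3}^2\|\nabla\theta\|_{L^2}^2$ as the sole driving term. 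Integrating the resulting differential inequality from $0$ to $t$ and taking the supremum in $t\in[0,T]$ yields \eqref{3.3}.

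The only genuinely delicate point in the argument is the simultaneous coercivity for $\|\nabla u\|_{L^2}^2$ and $\|w\|_{H^1}^2$ after treating the gyroscopic cross term: a naive Young split with equal weights would exactly cancel the $4\mu_r\|w\|_{L^2}^2$ good term, destroying the $L^2$-dissipation of $w$. The small-parameter weighting with $\delta$ above is what keeps a positive residual on both sides simultaneously. Everything else in the estimate is routine.
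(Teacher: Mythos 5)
Your proof is correct and follows essentially the same route as the paper: the same energy identity, the same decomposition $\|\nabla f\|_{L^2}^2=\|\curl f\|_{L^2}^2+\|\divv f\|_{L^2}^2$, and the same $L^3\times L^6\times L^2$ H\"older/Sobolev treatment of the pressure term. The only cosmetic difference is in absorbing the gyroscopic cross term $4\mu_r\int\curl u\cdot w\,dx$: you use a weighted Young inequality with a small parameter $\delta$, whereas the paper completes the square into $\mu_r\|\curl u-2w\|_{L^2}^2$ and recovers the $\|w\|_{L^2}^2$ dissipation from that quantity together with $\|\curl u\|_{L^2}^2$ --- both are valid and your observation about the failure of the equal-weight split is exactly the right point.
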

\begin{proof}[Proof]
Multiplying $\eqref{a1}_2$ by $u$ and $\eqref{a1}_3$ by $w$, respectively, and integrating by parts over $\mathbb{R}^3$,
we then obtain that
\begin{align}\label{z3.3}
&\frac{1}{2}\frac{d}{dt}\int\big(\rho|u|^2+\rho|w|^2\big)dx
+\int\big[(\mu+\mu_r)|\nabla u|^2+(\lambda+\mu-\mu_r)
(\divv u)^2\big]dx\nonumber\\
&\quad+\int\big[(c_a+c_d)|\nabla w|^2+(c_0+c_d-c_a)
(\divv w)^2+4\mu_r|w|^2\big]dx\nonumber\\
&=\frac{1}{2}\frac{d}{dt}\int\big(\rho|u|^2+\rho|w|^2\big)dx
+\int\big[\mu|\nabla u|^2+(\lambda+\mu)(\divv u)^2
+\mu_r|\curl u|^2\big]dx\nonumber\\
&\quad+\int\big[c_d|\nabla w|^2+(c_0+c_d)(\divv w)^2
+c_a|\curl w|^2+4\mu_r|w|^2\big]dx\nonumber\\
&=R\int\rho\theta\divv udx+4\mu_r\int\curl u\cdot wdx\nonumber\\
&\le R\|\rho\|_{L^3}\|\theta\|_{L^6}\|\divv u\|_{L^2}
+4\mu_r\int\curl u\cdot wdx\nonumber\\
&\le C\|\rho\|_{L^3}\|\nabla\theta\|_{L^2}\|\divv u\|_{L^2}
+4\mu_r\int\curl u\cdot wdx\nonumber\\
&\le \frac{\mu+\lambda}{2}\|\divv u\|_{L^2}^2+C\|\rho\|_{L^3}^2\|\nabla\theta\|_{L^2}^2
+4\mu_r\int\curl u\cdot wdx,
\end{align}
where in the first equality one has used
\begin{align}\label{z3.4}
\int|\nabla u|^2dx=\int\big(|\curl u|^2+(\divv u)^2\big)dx,\ \
\int|\nabla w|^2dx=\int\big(|\curl w|^2+(\divv w)^2\big)dx.
\end{align}
Thus, we derive from \eqref{z3.3} that
\begin{align}\label{3.4}
&\frac{d}{dt}(\|\sqrt{\rho}u\|_{L^2}^2+\|\sqrt{\rho}w\|_{L^2}^2)+\mu\|\nabla u\|_{L^2}^2
+(\mu+\lambda)\|\divv u\|_{L^2}^2+c_d\|\nabla w\|_{L^2}^2\nonumber\\
&+\mu_r\|\curl u-2w\|_{L^2}^2+(c_0+c_d)\|\divv w\|_{L^2}^2
+c_a\|\curl w\|_{L^2}^2\nonumber\\
&\le C\|\rho\|_{L^3}^2\|\nabla\theta\|_{L^2}^2,
\end{align}
due to
\begin{align*}
\int\big(\mu_r|\curl u|^2-4\mu_r\curl u\cdot w+4\mu_r|w|^2\big)dx=\mu_r\int|\curl u-2w|^2dx.
\end{align*}
Integrating \eqref{3.4} in $t$ over $[0,T]$ leads to \eqref{3.3}.
\end{proof}

\begin{lemma}\label{l32}
Under the condition \eqref{1.5}, there exists a positive constant $C$ depending only on $\mu$, $\lambda$, $\mu_r$, $c_0$, $c_a$, $c_d$, $j_I$, $c_v$, $R$, and $\kappa$ such that, for any $\varepsilon>0$,
\begin{align}\label{3.5}
&\sup_{0\le t\le T}\|\sqrt{\rho}\Psi\|_{L^2}^2
+\int_0^T\big(\|\nabla\theta\|_{L^2}^2+\||u||\nabla u|\|_{L^2}^2+\||w||\nabla w|\|_{L^2}^2\big)dt\nonumber\\
&\quad+\int_0^T\big(\||w||\nabla u|\|_{L^2}^2
+\||u||\nabla w|\|_{L^2}^2+\||u||w|\|_{L^2}^2\big)dt\nonumber\\
&\le C\|\sqrt{\rho_0}\Psi_0\|_{L^2}^2
+C\int_0^T\|\rho\|_{L^\infty}\|\sqrt{\rho}\theta\|_{L^2}\|\nabla\theta\|_{L^2}\||u||\nabla u|\|_{L^2}\|\rho\|_{L^3}^\frac{1}{2}dt\nonumber\\
&\quad +C(\varepsilon)\bar{\rho}^2\int_0^T\|\nabla u\|_{L^2}^4\|\nabla w\|_{L^2}^2dt+\frac{C\varepsilon}{\bar{\rho}^2}\int_0^T\|w\|_{L^2}^2dt
+C\bar{\rho}^2\int_0^T\|\nabla w\|_{L^2}^4\|\nabla u\|_{L^2}^2dt,
\end{align}
where $\Psi\triangleq\frac{|u|^2}{2}+\frac{j_I|w|^2}{2}+c_v\theta$.
\end{lemma}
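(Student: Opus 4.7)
The plan is to prove \eqref{3.5} by deriving a second-moment energy estimate for $\sqrt\rho\,\Psi$. I would first combine the momentum and angular momentum equations (taking the dot products with $u$ and $w$, respectively) together with the temperature equation to obtain a single scalar equation for $\rho D_t\Psi$, where $D_t\triangleq\partial_t+u\cdot\nabla$, and then multiply by $\Psi$ and integrate over $\mathbb{R}^3$. Using $\rho_t+\divv(\rho u)=0$, the time-derivative piece collapses to $\tfrac12\frac{d}{dt}\|\sqrt\rho\,\Psi\|_{L^2}^2$, which after integration in $t$ yields the first term on the LHS of \eqref{3.5}.

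For the viscous stresses, decomposing $\Delta=\nabla\divv-\nabla\times\curl$ and integrating by parts gives
\begin{equation*}
\int\Psi\,u\cdot\bigl[(\mu+\mu_r)\Delta u+(\lambda+\mu-\mu_r)\nabla\divv u\bigr]\,dx = -(2\mu+\lambda)\int\Psi(\divv u)^2\,dx - (\mu+\mu_r)\int\Psi|\curl u|^2\,dx + R_u,
\end{equation*}
where $R_u$ collects the cross integrals involving $\nabla\Psi$, with an analogous identity for $w$; the heat-conduction term $\int\Psi\kappa\Delta\theta\,dx$ similarly produces $-c_v\kappa\|\nabla\theta\|_{L^2}^2$ plus $\nabla\Psi$-cross residuals. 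The key observation is that the heating contributions in $\eqref{a1}_4$, namely $\mathcal{Q}(\nabla u)$, $4\mu_r|\tfrac12\curl u-w|^2$, and the $c_0(\divv w)^2$, $\nabla w:\nabla w^T$, $\nabla w:\nabla w$ pieces, when weighted against $\Psi$ combine with the negative viscous contributions to yield a $\Psi$-weighted quadratic form in $(\nabla u,\nabla w,w)$. Since $\theta\geq 0$ implies $\Psi\geq\tfrac12(|u|^2+j_I|w|^2)$ pointwise, this form produces the dissipation norms $\||u||\nabla u|\|_{L^2}^2$, $\||w||\nabla w|\|_{L^2}^2$, $\||w||\nabla u|\|_{L^2}^2$, $\||u||\nabla w|\|_{L^2}^2$, $\||u||w|\|_{L^2}^2$ stated on the LHS of \eqref{3.5}, with strictly positive coefficients precisely under \eqref{1.5}: the strict inequalities $2\mu>\lambda+\mu_r/2$ and $2c_d>c_0+c_a/2$ are exactly what is needed so that, after Young's inequality is used on the $u$--$w$ couplings $2\mu_r\Psi u\cdot\curl w$ and $2\mu_r\Psi w\cdot\curl u$, the net quadratic form remains coercive.

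The pressure and convection combine as $-\int\Psi\,\divv(pu)\,dx=\int p\,u\cdot\nabla\Psi\,dx$; splitting $\nabla\Psi=\tfrac12\nabla|u|^2+\tfrac{j_I}{2}\nabla|w|^2+c_v\nabla\theta$, the first two kinetic pieces are absorbed into the LHS dissipation via Cauchy--Schwarz with a small parameter, while the third, $Rc_v\int\rho\theta\,u\cdot\nabla\theta\,dx$, after one integration by parts shifting $\nabla$ onto $u$ and H\"older's inequality with the factorization $\rho\theta=\rho^{1/2}\cdot(\rho^{1/2}\theta)$, produces the first RHS error term $C\|\rho\|_{L^\infty}\|\sqrt\rho\,\theta\|_{L^2}\|\nabla\theta\|_{L^2}\||u||\nabla u|\|_{L^2}\|\rho\|_{L^3}^{1/2}$ of \eqref{3.5}. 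The $\nabla\Psi$-cross residuals from the viscous integration by parts are coupled products of $|u|,|w|,|\nabla u|,|\nabla w|$; Young's inequality with parameter $\varepsilon$ absorbs the bulk into the LHS dissipation, and the remaining residuals yield the $C(\varepsilon)\bar\rho^2\|\nabla u\|_{L^2}^4\|\nabla w\|_{L^2}^2$ and $C\varepsilon\bar\rho^{-2}\|w\|_{L^2}^2$ terms on the RHS of \eqref{3.5}, obtained via the Sobolev embedding $\|u\|_{L^6}\leq C\|\nabla u\|_{L^2}$ from Lemma \ref{l22} and rescaling by $\bar\rho$.

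The main obstacle I expect is the algebraic bookkeeping in the second paragraph: the viscous, heating, pressure, and coupling terms generate many $\Psi$-weighted quadratic forms of mixed sign, and verifying pointwise coercivity of their sum on $(\nabla u,\nabla w,w)$ requires careful use of the identities $\int|\nabla u|^2\,dx=\int(\divv u)^2\,dx+\int|\curl u|^2\,dx$ together with the structural condition \eqref{1.5}, which is the sole place in the argument where this assumption is invoked (cf.\ Remark 1.5). Modulo these manipulations, time integration of the resulting differential inequality yields \eqref{3.5}.
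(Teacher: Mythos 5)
There is a genuine gap at the heart of your argument. You claim that, after multiplying the total-energy equation for $\rho D_t\Psi$ by $\Psi$, ``the heating contributions in $\eqref{a1}_4$ \dots combine with the negative viscous contributions to yield a $\Psi$-weighted quadratic form in $(\nabla u,\nabla w,w)$'' that is coercive and produces $\||u||\nabla u|\|_{L^2}^2$, $\||w||\nabla w|\|_{L^2}^2$, etc.\ on the left. This cannot happen: the viscous work $u\cdot\divv\mathcal{S}$ from the momentum equation and the viscous heating $\mathcal{Q}(\nabla u)$ from the temperature equation cancel \emph{exactly} (since $\mathcal{S}:\nabla u=\mathcal{Q}(\nabla u)$) into the pure divergence $\divv(\mathcal{S}\cdot u)$ — that is precisely what makes total energy conservative. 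Multiplying by $\Psi$ and integrating by parts therefore leaves only $-\int(\mathcal{S}\cdot u)\cdot\nabla\Psi\,dx$, a quartic residual of the form $\int|u||\nabla u|\,(|u||\nabla u|+|w||\nabla w|+|\nabla\theta|)\,dx$, i.e.\ of exactly the same order as the dissipation you are trying to produce. No choice of small parameter in Young's inequality lets a term absorb itself. Consistently with this, in the paper's own $\Psi$-multiplier estimate the quantities $\||u||\nabla u|\|_{L^2}^2$ and $\||w||\nabla w|\|_{L^2}^2$ appear on the \emph{right}-hand side with a $+C$ sign as errors to be controlled, and only $\|\nabla\theta\|_{L^2}^2$ (coming from $-\kappa\Delta\theta$, which has no compensating source) survives as genuine dissipation.

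The missing idea is that the weighted dissipation must be generated from the momentum and angular-momentum equations \emph{alone}, without adding the temperature equation, via Mellet--Vasseur-type multipliers: the paper tests $\eqref{a1}_2$ with $|u|^2u$ to get $\||u||\nabla u|\|_{L^2}^2$ (this is where $2\mu>\lambda+\mu_r/2$ enters), tests $\eqref{a1}_3$ with $|w|^2w$ to get $\||w||\nabla w|\|_{L^2}^2$ (where $2c_d>c_0+c_a/2$ enters), and tests with the cross multipliers $|w|^2u$ and $j_I^{-1}|u|^2w$ to get $\||w||\nabla u|\|_{L^2}^2+\||u||\nabla w|\|_{L^2}^2+\||u||w|\|_{L^2}^2$; these four estimates are then combined with the $\Psi$-multiplier identity (with a large constant in front of the $|u|^2u$ and $|w|^2w$ estimates) so that the left-hand dissipation dominates the right-hand quartic errors. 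A secondary inaccuracy: the error term $C\|\rho\|_{L^\infty}\|\sqrt{\rho}\theta\|_{L^2}\|\nabla\theta\|_{L^2}\||u||\nabla u|\|_{L^2}\|\rho\|_{L^3}^{1/2}$ arises from $\int\rho^2\theta^2|u|^2\,dx$ (the pressure work in the $|u|^2u$-estimate, bounded as in \eqref{3.16}), not from $Rc_v\int\rho\theta\,u\cdot\nabla\theta\,dx$ as you suggest.
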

\begin{proof}[Proof]
1. For $\Psi\triangleq\frac{|u|^2}{2}+\frac{j_I|w|^2}{2}+c_v\theta$,
we derive from \eqref{a1} that
\begin{align}\label{3.13}
\rho (\Psi_t+u\cdot\nabla\Psi)+\divv(up)-\kappa\Delta\theta
& =\divv(\mathcal{S}\cdot u)+\divv(\mathcal{G}\cdot w)
+\curl(\mathcal{R}\cdot u) \notag \\
& \quad -2\mu_r\curl u\cdot w+2\mu_r\curl w\cdot u,
\end{align}
where $\mathcal{S}=(\mu+2\mu_r)(\nabla u+\nabla u^\top)+(\lambda-4\mu_r)\divv u
\mathbb{I}_3$, $\mathcal{G}=(c_d+c_a)(\nabla w+\nabla w^\top)+(c_0-2c_a)\divv w\mathbb{I}_3$, and $\mathcal{R}=\mu_r\curl u$ with $\mathbb{I}_3$ being the identity matrix of order $3$. Multiplying \eqref{3.13} by $\Psi$ and integrating the resultant over $\mathbb{R}^3$, we deduce from Sobolev's inequality and Young's inequality that
\begin{align*}
&\frac12\frac{d}{dt}\|\sqrt{\rho}\Psi\|_{L^2}^2
+c_v\kappa\|\nabla\theta\|_{L^2}^2\nonumber\\
&\le \frac{c_v\kappa}{2}\|\nabla\theta\|_{L^2}^2+C\||u||\nabla u|\|_{L^2}^2
+C\||w||\nabla w|\|_{L^2}^2+C\int\rho^2\theta^2|u|^2dx\nonumber\\
&\quad+C\int|\nabla u||w||\Psi|dx+C\int|w||u||\nabla\Psi|dx\nonumber\\
&\le\frac{c_v\kappa}{4}\|\nabla\theta\|_{L^2}^2+C\||u||\nabla u|\|_{L^2}^2
+C\||w||\nabla w|\|_{L^2}^2+C\|\nabla u\|_{L^2}\|w\|_{L^3}\|\Psi\|_{L^6}\nonumber\\
&\quad+C\|u\|_{L^6}\|w\|_{L^3}\|\nabla\Psi\|_{L^2}
+C\int\rho^2\theta^2|u|^2dx\nonumber\\
&\le \frac{c_v\kappa}{4}\|\nabla\theta\|_{L^2}^2+C\||u||\nabla u|\|_{L^2}^2
+C\||w||\nabla w|\|_{L^2}^2+C\int\rho^2\theta^2|u|^2dx\nonumber\\
&\quad+C\|\nabla\Psi\|_{L^2}\|\nabla u\|_{L^2}\|w\|_{L^2}^\frac12\|\nabla w\|_{L^2}^\frac12\nonumber\\
&\le \frac{c_v\kappa}{2}\|\nabla\theta\|_{L^2}^2+\frac{\varepsilon}{\bar{\rho}^2}\|w\|_{L^2}^2+C\||u||\nabla u|\|_{L^2}^2
+C\||w||\nabla w|\|_{L^2}^2+C\int\rho^2\theta^2|u|^2dx\nonumber\\
&\quad+C(\varepsilon)\bar{\rho}^2\|\nabla u\|_{L^2}^4\|\nabla w\|_{L^2}^2,
\end{align*}
which yields that
\begin{align}\label{3.14}
\frac12\frac{d}{dt}\|\sqrt{\rho}\Psi\|_{L^2}^2
+\frac{c_v\kappa}{2}\|\nabla\theta\|_{L^2}^2
& \le \frac{\varepsilon}{\bar{\rho}^2}\|w\|_{L^2}^2+C\||u||\nabla u|\|_{L^2}^2
+C\||w||\nabla w|\|_{L^2}^2  \nonumber\\
& \quad +C\int\rho^2\theta^2|u|^2dx
+C(\varepsilon)\bar{\rho}^2\|\nabla u\|_{L^2}^4\|\nabla w\|_{L^2}^2.
\end{align}

2. Multiplying $\eqref{a1}_2$ by $|u|^2u$ and integration by parts, we obtain that
\begin{align}\label{3.6}
&\frac14\frac{d}{dt}\|\sqrt{\rho}|u|^2\|_{L^2}^2-\int\big[(\mu+\mu_r)\Delta u+(\mu+\lambda-\mu_r)\nabla\divv u\big]\cdot|u|^2udx\nonumber\\
&=R\int \rho\theta\divv(|u|^2u)dx+2\mu_r\int \curl w\cdot|u|^2udx\nonumber\\
&\le \Big(\mu-\frac{\lambda}{2}-\frac{\mu_r}{4}\Big)\int|u|^2|\nabla u|^2dx+C\int\rho^2\theta^2|u|^2dx+2\mu_r\int w\cdot\curl(|u|^2u)dx.
\end{align}
Using the following facts
\begin{align*}
|\curl u|\le |\nabla u|,\
|\nabla|u||\le |\nabla u|,\ \Delta u=\nabla\divv u-\curl\curl u,\ \mu_r>0,\
2\mu>\lambda+\frac{\mu_r}{2},
\end{align*}
we deduce that
\begin{align}\label{3.7}
&-\int\big[(\mu+\mu_r)\Delta u+(\mu+\lambda-\mu_r)\nabla\divv u\big]\cdot|u|^2udx\nonumber\\
&=-\int|u|^2(\mu\Delta u+(\mu+\lambda)\nabla \divv u\cdot u)dx+\mu_r\int|u|^2\curl\curl u\cdot udx\nonumber\\
&=\int|u|^2\big[\mu|\nabla u|^2+(\mu+\lambda)
(\divv u)^2+2\mu|\nabla|u||^2\big]dx+(\mu+\lambda)\int(\nabla|u|^2)\cdot
u\divv udx\nonumber\\
&\quad+\mu_r\int|\curl u|^2|u|^2dx+\mu_r\int\curl u\cdot\nabla|u|^2\times udx\nonumber\\
&\ge \int|u|^2\big[\mu|\nabla u|^2+(\mu+\lambda)
(\divv u)^2+2\mu|\nabla|u||^2-2(\mu+\lambda)|\nabla|u|||\divv u|\big]dx\nonumber\\
&\quad+\frac{\mu_r}{2}\int|\curl u|^2|u|^2dx-\frac{\mu_r}{2}\int|\nabla|u||^2|u|^2dx\nonumber\\
&=\int|u|^2\big[\mu|\nabla u|^2+(\mu+\lambda)(\divv u-|\nabla|u||)^2\big]dx
+\frac{\mu_r}{2}\int|\curl u|^2|u|^2dx\nonumber\\
&\quad+\Big(\mu-\lambda-\frac{\mu_r}{2}\Big)\int|u|^2|\nabla|u||^2dx\nonumber\\
&\ge 2\Big(\mu-\frac{\lambda}{2}-\frac{\mu_r}{4}\Big)\int|u|^2|\nabla u|^2dx,
\end{align}
which combined with \eqref{3.6} yields that
\begin{align}\label{3.8}
\frac14\frac{d}{dt}\|\sqrt{\rho}|u|^2\|_{L^2}^2
+\Big(\mu-\frac{\lambda}{2}-\frac{\mu_r}{4}\Big)\||u||\nabla u|\|_{L^2}^2
\le C\int\rho^2\theta^2|u|^2dx+C\int|w||u|^2|\nabla u|dx.
\end{align}

3. Multiplying $\eqref{a1}_3$ by $|w|^2w$ and integration by parts, we derive that
\begin{align}\label{3.9}
&\frac{j_I}{4}\frac{d}{dt}\|\sqrt{\rho}|w|^2\|_{L^2}^2-\int\big[(c_a+c_d)\Delta w+(c_0+c_d-c_a)\nabla\divv w\big]\cdot|w|^2wdx+4\mu_r\int|w|^4dx \notag \\
&=2\mu_r\int u\cdot\curl(|w|^2w)dx.
\end{align}
Owing to $c_a>0$ and $2c_d>c_0+\frac{c_a}{2}$, similarly to \eqref{3.7}, we deduce that
\begin{align*}
-\int\big[(c_d+c_0)\Delta u+(c_d+c_0-c_a)\nabla\divv w\big]\cdot|w|^2wdx
\ge \Big(2c_d-c_0-\frac{c_a}{2}\Big)\int|w|^2|\nabla w|^2dx.
\end{align*}
This along with \eqref{3.9} gives that
\begin{align}\label{z3.9}
\frac14\frac{d}{dt}\|\sqrt{\rho}|w|^2\|_{L^2}^2
+\frac{1}{j_I}\Big(2c_d-c_0-\frac{c_a}{2}\Big)\||w||\nabla w|\|_{L^2}^2+\frac{4\mu_r}{j_I}\|w\|_{L^4}^4\leq C\int |u||w|^2|\nabla w|dx.
\end{align}
We then infer from \eqref{3.8}, \eqref{z3.9}, Sobolev's inequality, and Cauchy-Schwarz inequality that, for $\varepsilon>0$,
\begin{align*}
&\frac14\frac{d}{dt}\big(\|\sqrt{\rho}|u|^2\|_{L^2}^2
+\|\sqrt{\rho}|w|^2\|_{L^2}^2\big)
+\Big(\mu-\frac{\lambda}{2}-\frac{\mu_r}{4}\Big)\||u||\nabla u|\|_{L^2}^2 \notag \\
& \quad +\frac{1}{j_I}\Big(2c_d-c_0-\frac{c_a}{2}\Big)\||w||\nabla w|\|_{L^2}^2+\frac{4\mu_r}{j_I}\|w\|_{L^4}^4\nonumber\\
&\le C\int\rho^2\theta^2|u|^2dx+C\int|w||u|^2|\nabla u|dx+C\int|u||w|^2|\nabla w|dx\nonumber\\
&\le C\int\rho^2\theta^2|u|^2dx
+C\|w\|_{L^3}\||u|^2\|_{L^6}\|\nabla u\|_{L^2}+C\|u\|_{L^6}\||w||\nabla w|\|_{L^2}\|w\|_{L^3}\nonumber\\
&\le C\int\rho^2\theta^2|u|^2dx
+C\|w\|_{L^3}\||u||\nabla u|\|_{L^2}\|\nabla u\|_{L^2}
+C\|w\|_{L^3}\|\nabla u\|_{L^2}\||w||\nabla w|\|_{L^2}\nonumber\\
&\le \frac12\Big(\mu-\frac{\lambda}{2}-\frac{\mu_r}{4}\Big)\||u||\nabla u|\|_{L^2}^2
+\frac{1}{2j_I}\Big(2c_d-c_0-\frac{c_a}{2}\Big)\||w||\nabla w|\|_{L^2}^2 \notag \\
& \quad +C\|w\|_{L^2}\|\nabla w\|_{L^2}\|\nabla u\|_{L^2}^2+C\int\rho^2\theta^2|u|^2dx\nonumber\\
&\le \frac12\Big(\mu-\frac{\lambda}{2}-\frac{\mu_r}{4}\Big)\||u||\nabla u|\|_{L^2}^2
+\frac{1}{2j_I}\Big(2c_d-c_0-\frac{c_a}{2}\Big)\||w||\nabla w|\|_{L^2}^2 \notag \\
& \quad +C(\varepsilon)\bar{\rho}\|\nabla u\|_{L^2}^4\|\nabla w\|_{L^2}^2+\frac{\varepsilon}{\bar{\rho}}\|w\|_{L^2}^2
+C\int\rho^2\theta^2|u|^2dx,
\end{align*}
which implies that
\begin{align}\label{3.10}
&\frac{d}{dt}\big(\|\sqrt{\rho}|u|^2\|_{L^2}^2+\|\sqrt{\rho}|w|^2\|_{L^2}^2\big)
+2\Big(\mu-\frac{\lambda}{2}-\frac{\mu_r}{4}\Big)\||u||\nabla u|\|_{L^2}^2+\frac{2}{j_I}\Big(2c_d-c_0-\frac{c_a}{2}\Big)\||w||\nabla w|\|_{L^2}^2\nonumber\\
&\le C(\varepsilon)\bar{\rho}\|\nabla u\|_{L^2}^4\|\nabla w\|_{L^2}^2+\frac{C\varepsilon}{\bar{\rho}}\|w\|_{L^2}^2
+C\int\rho^2\theta^2|u|^2dx.
\end{align}

4. Multiplying $\eqref{a1}_2$ by $|w|^2u$ and $\eqref{a1}_3$ by $\frac{1}{j_I}|u|^2w$, respectively, summing up and integrating by parts over $\mathbb{R}^3$, one gets that
\begin{align*}
&\frac12\frac{d}{dt}\|\sqrt{\rho}|u||w|\|_{L^2}^2+\mu\||w||\nabla u|\|_{L^2}^2
+\mu_r\||w||\curl u|\|_{L^2}^2+(\mu+\lambda)\||w||\divv u|\|_{L^2}^2\nonumber\\
&\quad+\frac{c_d}{j_I}\||u||\nabla w|\|_{L^2}^2+\frac{c_0+c_d}{j_I}\||u||\divv w|\|_{L^2}^2+\frac{c_a}{j_I}\||u||\curl w|\|_{L^2}^2+\frac{4\mu_r}{j_I}\||u||w|\|_{L^2}^2\nonumber\\
&=-\frac{\mu j_I+c_d}{2j_I}\int\nabla(|u|^2)\cdot\nabla(|w|^2)dx
-(\mu+\lambda)\int u\cdot\nabla(|w|^2)\divv udx
+\int p\divv (|w|^2u)dx\nonumber\\
&\quad-\mu_r\int\curl u\cdot\nabla|w|^2\times udx-\frac{c_0+c_d}{j_I}\int w\cdot\nabla(|u|^2)\divv wdx
-\frac{c_a}{j_I}\int\curl w\cdot\nabla(|u|^2)\times wdx\nonumber\\
&\quad+2\mu_r\int\curl w\cdot|w|^2udx+2\mu_r\int\curl u\cdot|u|^2wdx\nonumber\\
&\le C\||u||\nabla u|\|_{L^2}^2+C\||w||\nabla w|\|_{L^2}^2+2\mu_r\||u||w|\|_{L^2}^2
+C\int\rho^2\theta^2|u|^2dx+C\bar{\rho}\int\theta|\divv u||w|^2dx\nonumber\\
&\le  C\||u||\nabla u|\|_{L^2}^2+C\||w||\nabla w|\|_{L^2}^2+2\mu_r\||u||w|\|_{L^2}^2
+C\int\rho^2\theta^2|u|^2dx+C\bar{\rho}\|\theta\|_{L^6}\|\nabla u\|_{L^2}\||w|^2\|_{L^3}\nonumber\\
&\le \frac{c_v\kappa}{4}\|\nabla\theta\|_{L^2}^2+C\||u||\nabla u|\|_{L^2}^2+C\||w||\nabla w|\|_{L^2}^2+2\mu_r\||u||w|\|_{L^2}^2
+C\bar{\rho}^2\|\nabla u\|_{L^2}^2\|\nabla w\|_{L^2}^4,
\end{align*}
which combined with \eqref{3.14} leads to
\begin{align}\label{3.12}
&\frac{d}{dt}\big(\|\sqrt{\rho}\Psi\|_{L^2}^2+\|\sqrt{\rho}|u||w|\|_{L^2}^2\big)
+\|\nabla\theta\|_{L^2}^2+\||w||\nabla u|\|_{L^2}^2
+\||u||\nabla w|\|_{L^2}^2+\||u||w|\|_{L^2}^2\nonumber\\
&\le C\||u||\nabla u|\|_{L^2}^2+C\||w||\nabla w|\|_{L^2}^2
+C\bar{\rho}^2\|\nabla u\|_{L^2}^2\|\nabla w\|_{L^2}^4.
\end{align}
Adding \eqref{3.12} to \eqref{3.10} multiplied by a large enough constant $K>0$, we deduce that
\begin{align}\label{3.15}
&\frac{d}{dt}\big(\|\sqrt{\rho}\Psi\|_{L^2}^2+\|\sqrt{\rho}|u||w|\|_{L^2}^2
+\|\sqrt{\rho}|u|^2\|_{L^2}^2+\|\sqrt{\rho}|w|^2\|_{L^2}^2\big)
\nonumber\\
&\quad+\|\nabla\theta\|_{L^2}^2
+\||u||\nabla u|\|_{L^2}^2+\||w||\nabla w|\|_{L^2}^2+\||w||\nabla u|\|_{L^2}^2+\||u||\nabla w|\|_{L^2}^2+\||u||w|\|_{L^2}^2\nonumber\\
&\le C\|\rho\|_{L^\infty}\|\sqrt{\rho}\theta\|_{L^2}\|\nabla\theta\|_{L^2}\||u||\nabla u|\|_{L^2}\|\rho\|_{L^3}^\frac{1}{2}
+C(\varepsilon)\bar{\rho}^2\|\nabla u\|_{L^2}^4\|\nabla w\|_{L^2}^2\nonumber\\
&\quad+C\bar{\rho}^2\|\nabla u\|_{L^2}^2\|\nabla w\|_{L^2}^4+\frac{C \varepsilon}{\bar{\rho}^2}\|w\|_{L^2}^2,
\end{align}
due to
\begin{align}\label{3.16}
C\int\rho^2\theta^2|u|^2dx\le C\|\sqrt{\rho}\theta\|_{L^2}\|\theta\|_{L^6}\||u|^2\|_{L^6}
\|\rho\|_{L^9}^\frac32\le C\|\rho\|_{L^\infty}\|\sqrt{\rho}\theta\|_{L^2}\|\nabla\theta\|_{L^2}\||u||\nabla u|\|_{L^2}\|\rho\|_{L^3}^\frac{1}{2}.
\end{align}
Thus, \eqref{3.5} follows from \eqref{3.15} by integrating $t$ over $[0,T]$.
\end{proof}

\begin{lemma}
Assume that
\begin{align}\label{3.17}
\sup_{0\le t\le T}\|\rho\|_{L^\infty}\le 4\bar{\rho},
\end{align}
then it holds that
\begin{align}\label{3.18}
\sup_{0\le t\le T}\|\rho\|_{L^3}^3+\int_0^T\int\rho^3pdxdt &\le \|\rho_0\|_{L^3}^3
+C\bar{\rho}^\frac23\sup_{0\le t\le T}\Big(\|\sqrt{\rho}u\|_{L^2}^\frac13
\|\sqrt{\rho}|u|^2\|_{L^2}^\frac13\|\rho\|_{L^3}^3\Big)\nonumber\\
&\quad+C\bar{\rho}^2\int_0^T\|\rho\|_{L^3}^2\|\nabla u\|_{L^2}^2dt.
 \end{align}
\end{lemma}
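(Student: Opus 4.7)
The plan is to derive an ODE-type inequality for $\|\rho\|_{L^3}^3$ by combining the continuity equation with the effective viscous flux $F_1=(2\mu+\lambda)\divv u-p$, then to estimate the resulting bad term $\int\rho^3 F_1\,dx$ by exploiting the elliptic identity $\Delta F_1=\divv(\rho\dot u)$ from \eqref{2.5}. Multiplying $\eqref{a1}_1$ by $3\rho^2$ and integrating by parts gives $\frac{d}{dt}\|\rho\|_{L^3}^3=-2\int\rho^3\divv u\,dx$; substituting $\divv u=(F_1+p)/(2\mu+\lambda)$ and noting $\int\rho^3 p=R\int\rho^4\theta$ yields
\begin{equation*}
\frac{d}{dt}\|\rho\|_{L^3}^3+\frac{2}{2\mu+\lambda}\int\rho^3 p\,dx=-\frac{2}{2\mu+\lambda}\int\rho^3 F_1\,dx,
\end{equation*}
so the whole task reduces to controlling $\int_0^T\int\rho^3 F_1\,dx\,dt$.

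Using the identity $\rho\dot u=(\rho u)_t+\divv(\rho u\otimes u)$ together with $\Delta F_1=\divv(\rho\dot u)$ naturally suggests the decomposition $F_1=\partial_t B+\mathcal R(\rho u\otimes u)$, where $B=\Delta^{-1}\divv(\rho u)$ and $\mathcal R=\partial_i\partial_j\Delta^{-1}$ is bounded on every $L^p$ with $1<p<\infty$. For the spatial piece, Hölder's inequality plus Calderón--Zygmund give
\begin{equation*}
\Bigl|\int\rho^3\mathcal R(\rho u\otimes u)\,dx\Bigr|\le\|\rho^3\|_{L^{3/2}}\|\rho|u|^2\|_{L^3}\le C\bar\rho^2\|\rho\|_{L^3}^2\|\nabla u\|_{L^2}^2,
\end{equation*}
after the interpolations $\|\rho\|_{L^{9/2}}^3\le\bar\rho\|\rho\|_{L^3}^2$ and $\|\rho|u|^2\|_{L^3}\le\bar\rho\|u\|_{L^6}^2\le C\bar\rho\|\nabla u\|_{L^2}^2$ via Sobolev embedding; integrating in time reproduces the last term in the conclusion.

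The temporal piece $\int_0^T\int\rho^3\partial_t B\,dx\,dt$ is then handled by integrating by parts in time, producing the boundary contribution $\bigl[\int\rho^3 B\,dx\bigr]_0^T$ plus an interior integral $-\int_0^T\int(\rho^3)_t B\,dx\,dt$. A single spatial IBP rewrites the boundary term as $-\int\rho u\cdot\eta\,dx$ with $\eta=\nabla\Delta^{-1}\rho^3$; pairing $\|\rho u\|_{L^3}\|\eta\|_{L^{3/2}}$ and invoking the endpoint Hardy--Littlewood--Sobolev bound $\|\eta\|_{L^{3/2}}\le C\|\rho^3\|_{L^1}=C\|\rho\|_{L^3}^3$ together with the Cauchy--Schwarz identity $\int\rho|u|^3=\int(\sqrt\rho|u|)(\sqrt\rho|u|^2)\le\|\sqrt\rho u\|_{L^2}\|\sqrt\rho|u|^2\|_{L^2}$ (so that $\|\rho u\|_{L^3}\le\bar\rho^{2/3}\|\sqrt\rho u\|_{L^2}^{1/3}\|\sqrt\rho|u|^2\|_{L^2}^{1/3}$) reproduces exactly the supremum term appearing in the statement. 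Substituting $(\rho^3)_t=-\divv(\rho^3 u)-2\rho^3\divv u$ and one more spatial IBP turn the interior integral into $-\int\rho^3 u\cdot\nabla B\,dx+2\int\rho^3 B\divv u\,dx$; the first is bounded by $\|\rho^3 u\|_{L^{6/5}}\|\nabla B\|_{L^6}\le C\bar\rho^2\|\rho\|_{L^3}^2\|\nabla u\|_{L^2}^2$ (using the Riesz estimate $\|\nabla B\|_{L^6}\le C\|\rho u\|_{L^6}\le C\bar\rho\|\nabla u\|_{L^2}$), and the second is absorbed into the same slot after Young's inequality.

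The hard part is justifying the borderline HLS bound $\|\nabla\Delta^{-1}\rho^3\|_{L^{3/2}}\le C\|\rho\|_{L^3}^3$: the map $L^1\ni\rho^3\mapsto\nabla\Delta^{-1}\rho^3$ is only of weak type $(1,3/2)$, so the strong form has to be recovered using the extra a priori control $\rho\le 4\bar\rho$ from \eqref{3.17} and Marcinkiewicz interpolation with a non-endpoint Riesz estimate, or through a cut-off/mollification of $\rho^3$ followed by a limiting procedure. Once this endpoint issue is secured, summing the three contributions above and integrating in $t$ yields the claimed inequality.
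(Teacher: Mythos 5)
Your overall strategy is exactly the one the paper intends: after observing that $\operatorname{div}\operatorname{curl}w=0$ kills the micro-rotation term, the paper rewrites $\eqref{a1}_2$ as \eqref{dxy} and simply cites \cite[Proposition 2.4]{JL20}; your proposal reconstructs that cited argument (renormalized continuity equation, $F_1=\partial_tB+\Delta^{-1}\operatorname{div}\operatorname{div}(\rho u\otimes u)$ with $B=\Delta^{-1}\operatorname{div}(\rho u)$, time integration by parts). The spatial piece and the $\int\rho^3u\cdot\nabla B$ term are handled correctly.

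There is, however, one genuine gap, and it is the one you yourself flag: the bound $\|\nabla\Delta^{-1}\rho^3\|_{L^{3/2}}\le C\|\rho^3\|_{L^1}$ is the false endpoint of Hardy--Littlewood--Sobolev, and neither of your proposed repairs can rescue it. Marcinkiewicz interpolation between the weak $(1,3/2)$ estimate and a strong non-endpoint estimate yields boundedness only for exponents strictly inside the interval, never at $p=1$; and mollifying $\rho^3$ does not help because the strong $(1,3/2)$ inequality fails with uniform constants even for smooth compactly supported data (approximate identities give $I_1f_\varepsilon\to c|x|^{-2}\notin L^{3/2}_{\rm loc}$). The correct fix is not to force all of $\rho^3$ through the Riesz potential: write $-\int\rho u\cdot\nabla\Delta^{-1}(\rho^3)\,dx\le\|\rho^{2/3}\cdot\rho^{1/3}u\|_{L^{r'}}\|\nabla\Delta^{-1}(\rho^3)\|_{L^{r}}$ with $r>3/2$, so that $\|\nabla\Delta^{-1}(\rho^3)\|_{L^{r}}\le C\|\rho^3\|_{L^{s}}$ for some $s>1$; then use $\rho\le4\bar\rho$ and interpolation to get $\|\rho^3\|_{L^{s}}\le C\bar\rho^{3(1-1/s)}\|\rho\|_{L^3}^{3/s}$ and $\|\rho^{2/3}\|_{L^{m}}\le C\bar\rho^{\alpha}\|\rho\|_{L^3}^{\beta}$, keeping the factor $\|\rho^{1/3}u\|_{L^3}=(\int\rho|u|^3dx)^{1/3}\le\|\sqrt\rho u\|_{L^2}^{1/3}\|\sqrt\rho|u|^2\|_{L^2}^{1/3}$ intact. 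The exponents balance by scaling and reproduce exactly $C\bar\rho^{2/3}\|\sqrt\rho u\|_{L^2}^{1/3}\|\sqrt\rho|u|^2\|_{L^2}^{1/3}\|\rho\|_{L^3}^3$ for any admissible choice (e.g.\ $s=9/8$). A smaller but related soft spot: the term $\int\rho^3B\operatorname{div}u\,dx$ is not absorbed by ``Young's inequality''; estimating $\|B\|_{L^6}\le C\|\rho u\|_{L^2}\le C\bar\rho^{1/2}\|\sqrt\rho u\|_{L^2}$ produces $\bar\rho^{5/2}\|\rho\|_{L^3}\|\sqrt\rho u\|_{L^2}\|\nabla u\|_{L^2}$, whose Young splitting leaves $\bar\rho^3\int_0^T\|\sqrt\rho u\|_{L^2}^2dt$, which is not on the right-hand side of \eqref{3.18}. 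You must instead take $\|B\|_{L^{b}}\le C\|\rho u\|_{L^{c}}\le C\|\rho\|_{L^{e}}\|u\|_{L^6}\le C\|\rho\|_{L^{e}}\|\nabla u\|_{L^2}$ for suitable $3<e<6$, after which H\"older and interpolation with $\rho\le4\bar\rho$ give $|\int\rho^3B\operatorname{div}u\,dx|\le C\bar\rho^2\|\rho\|_{L^3}^2\|\nabla u\|_{L^2}^2$ directly. With these two repairs the argument closes.
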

\begin{proof}[Proof]
Due to $\divv\curl w=0$, we can rewrite $\eqref{a1}_2$ as
\begin{align}\label{dxy}
\Delta^{-1}\divv(\rho u)_t+\Delta^{-1}\divv\divv(\rho u\otimes u)-(2\mu+\lambda)\divv u+p=0.
\end{align}
Thus, we obtain \eqref{3.18} by the same arguments as those in \cite[Proposition 2.4]{JL20}. Here we omit the details for simplicity.
\end{proof}

\begin{lemma}\label{l34}
Under the condition \eqref{3.17}, it holds that
\begin{align}\label{3.21}
&\sup_{0\le t\le T}\big(\|\nabla u\|_{L^2}^2+\|w\|_{H^1}^2\big)
+\int_0^T\Big\|\Big(\sqrt{\rho}u_t, \sqrt{\rho}w_t, \frac{\nabla F_1}{\sqrt{\bar{\rho}}},
 \frac{\nabla F_2}{\sqrt{\bar{\rho}}}, \frac{\nabla V_1}{\sqrt{\bar{\rho}}},
  \frac{\nabla V_2}{\sqrt{\bar{\rho}}}\Big)\Big\|_{L^2}^2dt\nonumber\\
&\le C\big(\|\nabla u_0\|_{L^2}^2+\|w_0\|_{H^1}^2\big)
+C\bar{\rho}^3\int_0^T\big(\|\nabla u\|_{L^2}^4+\|\nabla w\|_{L^2}^4\big)\big(\|\nabla u\|_{L^2}^2
+\bar{\rho}\|\sqrt{\rho}\theta\|_{L^2}^2\big)dt\nonumber\\
&\quad+C\int_0^T\big(\bar{\rho}^2\|\rho\|_{L^3}^\frac12\|\sqrt{\rho}\theta\|_{L^2}
+\bar{\rho}\big)\big(\|\nabla\theta\|_{L^2}^2+\||u||\nabla u|\|_{L^2}^2+\||u||\nabla w|\|_{L^2}^2\big)dt\nonumber\\
&\quad+C\bar{\rho}\sup_{0\le t\le T}\|\sqrt{\rho}\theta\|_{L^2}^2
+\frac{C}{\bar{\rho}}\int_0^T\big(\|w\|_{L^2}^2+\|\nabla u\|_{L^2}^2\big)dt,
\end{align}
where $F_1$ and $F_2$ are the same as those of in \eqref{2.3}, $V_1\triangleq\curl u$, and $V_2\triangleq\curl w$.
\end{lemma}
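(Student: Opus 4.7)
The plan is to test the momentum equation $\eqref{a1}_2$ against $u_t$ and the angular-momentum equation $\eqref{a1}_3$ against $w_t$, integrate over $\mathbb{R}^3$, and add the two resulting identities. Integration by parts produces $\|\sqrt{\rho}u_t\|_{L^2}^2 + j_I\|\sqrt{\rho}w_t\|_{L^2}^2$ on the dissipation side together with $\frac{1}{2}\frac{d}{dt}$ of the coercive quadratic form
\begin{align*}
\int\bigl[(\mu+\mu_r)|\nabla u|^2 + (\lambda+\mu-\mu_r)(\divv u)^2 + (c_a+c_d)|\nabla w|^2 + (c_0+c_d-c_a)(\divv w)^2 + 4\mu_r|w|^2\bigr]dx.
\end{align*}
The convective terms $\int\rho u\cdot\nabla u\cdot u_t\,dx$ and $j_I\int\rho u\cdot\nabla w\cdot w_t\,dx$ are absorbed by Young's inequality into $\frac14\|\sqrt\rho u_t\|_{L^2}^2 + C\bar\rho\||u||\nabla u|\|_{L^2}^2$ and the analogue for $w$, both of which are already controlled by Lemma \ref{l32}. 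The cross-coupling $2\mu_r\int\curl w\cdot u_t\,dx$ and $2\mu_r\int\curl u\cdot w_t\,dx$ are moved to time derivatives via $\int\curl w\cdot u_t\,dx = \frac{d}{dt}\int\curl w\cdot u\,dx - \int w_t\cdot\curl u\,dx$; after summing, the remaining off-diagonal piece is absorbed by $\varepsilon\|\sqrt\rho w_t\|_{L^2}^2$ plus $C\varepsilon^{-1}\bar\rho^{-1}\|\nabla u\|_{L^2}^2$.

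The main obstacle is the pressure contribution $\int p\,\divv u_t\,dx = \frac{d}{dt}\int p\,\divv u\,dx - \int p_t\,\divv u\,dx$. For the second summand, substitute $p_t = R\rho_t\theta + R\rho\theta_t$ and use the continuity equation $\rho_t=-\divv(\rho u)$ together with the energy equation $\eqref{a1}_4$ to eliminate $c_v\rho\theta_t$. This generates terms such as $\int\divv(\rho\theta u)\divv u\,dx$, $\frac{R}{c_v}\int p(\divv u)^2\,dx$, $-\frac{R\kappa}{c_v}\int\Delta\theta\,\divv u\,dx$, and analogous pieces coming from $\mathcal{Q}(\nabla u)$ and the rotational dissipation in $\eqref{a1}_4$. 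Integrating by parts $\int\Delta\theta\,\divv u\,dx = -\int\nabla\theta\cdot\nabla\divv u\,dx$ introduces $\nabla\divv u$, which is absorbed by Cauchy--Schwarz into a small multiple of $\bar\rho^{-1}\|\nabla F_1\|_{L^2}^2$ (using that $\nabla F_1 = (2\mu+\lambda)\nabla\divv u - \nabla p$). After H\"older and Sobolev estimates, each of these summands is controlled by the density-weighted quantities $(\bar\rho^2\|\rho\|_{L^3}^{1/2}\|\sqrt\rho\theta\|_{L^2}+\bar\rho)(\|\nabla\theta\|_{L^2}^2+\||u||\nabla u|\|_{L^2}^2+\||u||\nabla w|\|_{L^2}^2)$ that are bounded via Lemma \ref{l32}. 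The boundary-in-time piece $\int p\,\divv u\,dx$ is absorbed via $|\int p\,\divv u\,dx|\le \frac{\mu+\lambda}{4}\|\divv u\|_{L^2}^2 + C\bar\rho\|\sqrt\rho\theta\|_{L^2}^2$, producing the $C\bar\rho\sup\|\sqrt\rho\theta\|_{L^2}^2$ summand on the right-hand side of \eqref{3.21}.

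The higher-degree contributions of the form $\bar\rho^3(\|\nabla u\|_{L^2}^4+\|\nabla w\|_{L^2}^4)(\|\nabla u\|_{L^2}^2 + \bar\rho\|\sqrt\rho\theta\|_{L^2}^2)$ arise when estimating triple products like $\int\rho u\cdot\nabla u\cdot u_t\,dx$ and the cross-coupling integrals by interpolation $\|u\|_{L^\infty}\lesssim\|\nabla u\|_{L^2}^{1/2}\|\nabla u\|_{L^6}^{1/2}$ together with the elliptic bound $\|\nabla u\|_{L^6}\le C(\|\rho\dot u\|_{L^2}+\|\nabla w\|_{L^2}+\|\rho\theta\|_{L^6})$ from Lemma \ref{l23}; after Cauchy--Schwarz one absorbs the $\|\sqrt\rho u_t\|_{L^2}^2$ factor into the left-hand side and the leftover contributions reorganize to match exactly the stated form. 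Finally, Lemma \ref{l23} converts the time-integrated bound for $\|\sqrt\rho u_t\|_{L^2}^2 + \|\sqrt\rho w_t\|_{L^2}^2$ into one for $\bar\rho^{-1}\int_0^T(\|\nabla F_1\|_{L^2}^2+\|\nabla V_1\|_{L^2}^2+\|\nabla F_2\|_{L^2}^2+\|\nabla V_2\|_{L^2}^2)\,dt$ via $\|\rho\dot u\|_{L^2}^2 \le \bar\rho\|\sqrt\rho u_t\|_{L^2}^2 + \bar\rho^2\||u||\nabla u|\|_{L^2}^2$, accounting for the weights $1/\sqrt{\bar\rho}$ displayed in \eqref{3.21}. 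Integrating in time and recollecting all the terms closes the estimate.
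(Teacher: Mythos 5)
Your overall skeleton (testing $\eqref{a1}_2$ against $u_t$ and $\eqref{a1}_3$ against $w_t$) matches the paper's, but your treatment of the pressure term contains a genuine gap. You pair $p_t$ with $\divv u$ and, for the piece $-\frac{R\kappa}{c_v}\int\Delta\theta\,\divv u\,dx$, integrate by parts to get $\int\nabla\theta\cdot\nabla\divv u\,dx$ and then replace $\nabla\divv u$ by $\frac{1}{2\mu+\lambda}(\nabla F_1+\nabla p)$. The $\nabla F_1$ part is fine, but the $\nabla p$ part produces $R\int\theta\,\nabla\theta\cdot\nabla\rho\,dx$, and $\nabla\rho$ is not controlled at this stage of the argument (bounds on $\nabla\rho$ only appear in Lemma \ref{lz}, and they are time-dependent, whereas Lemma \ref{l34} must be time-independent to feed into Lemma \ref{l36}). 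There is no integration by parts that rescues this term: writing it as $-\frac{R}{2}\int\theta^2\Delta\rho\,dx$ is worse, and keeping $\Delta\theta$ un-integrated requires $\|\nabla^2\theta\|_{L^2}$, which is also unavailable here. The same obstruction appears in your term $\int\divv(\rho\theta u)\,\divv u\,dx$. This is precisely why the paper does \emph{not} multiply $p_t$ by $\divv u$: it first substitutes $\divv u=\frac{F_1+p}{2\mu+\lambda}$, so that $\int p\,\divv u_t\,dx$ becomes exact time derivatives of $\|p\|_{L^2}^2$ and $\int pF_1\,dx$ plus $-\frac{1}{2\mu+\lambda}\int p_tF_1\,dx$; the dangerous pieces $-\divv(pu)+(\gamma-1)\kappa\Delta\theta$ of $p_t$ are then paired with $F_1$ and integrate by parts onto $\nabla F_1$, which sits in the dissipation because the paper additionally tests \eqref{3.28}--\eqref{3.29} against $\nabla F_1$, $\nabla F_2$, $\curl V_1$, $\curl V_2$ to place $\bar\rho^{-1}\|(\nabla F_1,\nabla F_2,\nabla V_1,\nabla V_2)\|_{L^2}^2$ on the left of the differential inequality from the outset. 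Your plan of recovering these dissipation terms only a posteriori from Lemma \ref{l23} is workable for the final display, but it does not help you absorb $\int\nabla\theta\cdot\nabla F_1\,dx$ inside the differential inequality, and in any case does not repair the $\nabla\rho$ problem above.

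Two smaller points. First, your claim that the leftover cross-coupling is ``absorbed by $\varepsilon\|\sqrt\rho w_t\|_{L^2}^2$'' is not legitimate in the presence of vacuum: $\int w_t\cdot\curl u\,dx$ involves the unweighted $\|w_t\|_{L^2}$, which is not dominated by $\|\sqrt\rho w_t\|_{L^2}$ where $\rho$ vanishes. Fortunately this step is unnecessary: after summing, $2\mu_r\int\curl w\cdot u_t\,dx+2\mu_r\int\curl u\cdot w_t\,dx=2\mu_r\frac{d}{dt}\int\curl u\cdot w\,dx$ exactly, and together with $-4\mu_r\int w\cdot w_t\,dx$ these combine into $\frac{d}{dt}\big(\mu_r\|\curl u-2w\|_{L^2}^2\big)$ as in \eqref{3.23}. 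Second, the term $C\bar\rho^3(\|\nabla u\|_{L^2}^4+\|\nabla w\|_{L^2}^4)(\cdots)$ in \eqref{3.21} does not come from the convective terms (those are absorbed directly into $\|\sqrt\rho u_t\|_{L^2}^2$ and $\bar\rho\||u||\nabla u|\|_{L^2}^2$); in the paper it arises from the quadratic-gradient terms $\int(|\nabla u|^2+|\nabla w|^2+\cdots)|F_1|\,dx$ generated by $\int p_tF_1\,dx$, estimated through $\|F_1\|_{L^3}\le\|F_1\|_{L^2}^{1/2}\|\nabla F_1\|_{L^2}^{1/2}$ — another place where having $\nabla F_1$ in the dissipation is essential.
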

\begin{proof}[Proof]
1. Multiplying $\eqref{a1}_2$ by $u_t$ and $\eqref{a1}_3$ by $w_t$, respectively, and integrating the resulting equations over $\mathbb{R}^3$, we get that
\begin{align*}
&\frac12\frac{d}{dt}\int\big[(\mu+\lambda)(\divv u)^2+\mu|\nabla u|_{L^2}^2
+\mu_r|\curl u|^2
+(c_0+c_d)(\divv w)^2+c_d|\nabla w|^2+c_a|\curl w|^2\big]dx\nonumber\\
&\quad+\int\big(\rho|u_t|^2+j_I\rho|w_t|^2\big)dx\nonumber\\
&=\int p\divv u_tdx-\int\rho u\cdot\nabla u\cdot u_tdx-j_I\int\rho u\cdot\nabla w\cdot w_tdx\nonumber\\
&\quad+\int\big(2\mu_r\curl w\cdot u_t+2u_r\curl u\cdot w_t-4\mu_r w\cdot w_t\big)dx,
\end{align*}
which yields that
\begin{align}\label{3.23}
&\frac12\frac{d}{dt}\big[(\mu+\lambda)\|\divv u\|_{L^2}^2+\mu\|\nabla u\|_{L^2}^2+(c_0+c_d)\|\divv w\|_{L^2}^2
+c_d\|\nabla w\|_{L^2}^2+c_a\|\curl w\|_{L^2}^2+\mu_r\|\curl u-2w\|_{L^2}^2\big]\nonumber\\
&\quad+\|\sqrt{\rho}u_t\|_{L^2}^2+j_I\|\sqrt{\rho}w_t\|_{L^2}^2\nonumber\\
&=\int p\divv u_tdx-\int\rho u\cdot\nabla u\cdot u_tdx-j_I\int\rho u\cdot \nabla w\cdot w_tdx.
\end{align}
By the definition of effective viscous flux $F_1$ (see \eqref{2.3}), we use
$\divv u=\frac{F_1+p}{2\mu+\lambda}$ to obtain that
\begin{align}
\int p\divv u_tdx&=\frac{d}{dt}\int p\divv udx
-\int p_t\divv udx\nonumber\\
&=\frac{d}{dt}\int \frac{p (F_1+p)}{2\mu+\lambda}dx-\frac{1}{2(2\mu+\lambda)}\frac{d}{dt}\|p\|_{L^2}^2
-\frac{1}{2\mu+\lambda}\int p_tF_1dx\nonumber\\
&=\frac{1}{2(2\mu+\lambda)}\frac{d}{dt}\|p\|_{L^2}^2
+\frac{1}{2\mu+\lambda}\frac{d}{dt}\int pF_1dx
-\frac{1}{2\mu+\lambda}\int p_tF_1dx.
\end{align}
It follows $\eqref{a1}_1$, $\eqref{a1}_4$, and the state equation that
\begin{align}
p_t&=-\divv (pu)-(\gamma-1)\big(p\divv u-\kappa\Delta\theta-\mathcal{Q}(\nabla u)\big)
+4(\gamma-1)\mu_r\Big|\frac12\curl u-w\Big|^2\nonumber\\
&\quad+(\gamma-1)[c_0(\divv w)^2+(c_a+c_d)\nabla w:\nabla w^\top+(c_d-c_a)\nabla w:\nabla w],
\end{align}
with $\gamma-1=\frac{R}{c_v}$, which leads to
\begin{align}\label{3.26}
\int p_tF_1dx&=\int\Big[(\gamma-1)(\mathcal{Q}(\nabla u)-p\divv u)F_1+4(
\gamma-1)\mu_r
\Big|\frac12\curl u-w\Big|^2F_1\Big]dx\nonumber\\
&\quad+\int(\gamma-1)[c_0(\divv w)^2+(c_a+c_d)\nabla w:\nabla w^\top+(c_d-c_a)\nabla w:\nabla w]F_1dx\nonumber\\
&\quad+\int(up-(\gamma-1)\kappa\nabla\theta)\cdot\nabla F_1dx.
\end{align}
In view of \eqref{z3.4} and \eqref{3.23}--\eqref{3.26}, we arrive at
\begin{align}\label{3.27}
&\frac12\frac{d}{dt}\Big(\mu\|V_1\|_{L^2}^2
+(c_a+c_d)\|V_2\|_{L^2}^2+\frac{\|F_1\|_{L^2}^2}{2\mu+\lambda}
+\frac{\|F_2\|_{L^2}^2}{2c_d+c_0}
+\mu_r\|V_1-2w\|_{L^2}^2\Big)+\|\sqrt{\rho}u_t\|_{L^2}^2
+\|\sqrt{\rho}w_t\|_{L^2}^2\nonumber\\
&=-\int\rho u\cdot\nabla u\cdot u_tdx-j_I\int\rho u\cdot \nabla w\cdot w_tdx
+\frac{1}{2\mu+\lambda}\int\big((\gamma-1)\kappa\nabla\theta-up\big)\cdot\nabla F_1dx\nonumber\\
&\quad-\frac{\gamma-1}{2\mu+\lambda}\big[c_0(\divv w)^2+(c_a+c_d)\nabla w:\nabla w^\top+(c_d-c_a)\nabla w:\nabla w\big]F_1dx\nonumber\\
&\quad-\frac{\gamma-1}{2\mu+\lambda}\int\Big[\mathcal{Q}(\nabla u)-p\divv u+4\mu_r
\Big|\frac12\curl u-w\Big|^2\Big]F_1dx,
\end{align}
where $F_2=(2c_d+c_0)\divv w$, $V_1=\curl u$, and $V_2=\curl w$.

2. Noting that
\begin{align*}
\Delta u=\nabla\divv u-\curl\curl u,\  \Delta w=\nabla\divv w-\curl\curl w,
\end{align*}
we rewrite $\eqref{a1}_2$
and $\eqref{a1}_3$ as follows
\begin{align}
&\rho u_t+\rho u\cdot\nabla u=\nabla F_1-(\mu+\mu_r)\curl V_1+2\mu_r\curl w,\label{3.28}\\[3pt]
&j_I(\rho w_t+\rho u\cdot\nabla w)+4\mu_rw=\nabla F_2-(c_a+c_d)\curl V_2+2\mu_r\curl u,\label{3.29}
\end{align}
Multiplying \eqref{3.28} by $\nabla F_1$ and \eqref{3.29} by $\nabla F_2$, respectively, we obtain after integration by parts that
\begin{align*}
\|\nabla F_1\|_{L^2}^2+\|\nabla F_2\|_{L^2}^2
&=\int\rho(u_t+u\cdot\nabla u)\cdot\nabla F_1dx+\int\big[j_I\rho(w_t+u\cdot\nabla w)\cdot\nabla F_2+4\mu_rw\cdot\nabla F_2\big]dx\nonumber\\
&\le \int\Big(\frac{|\nabla F_1|^2+|\nabla F_2|^2}{2}+2\bar{\rho}\rho|u_t|^2+2j_I^2\bar{\rho}\rho|w_t|^2\Big)dx
+\int\rho u\cdot\nabla u\cdot\nabla F_1dx\nonumber\\
&\quad+\int\big(j_I\rho u\cdot\nabla w\cdot\nabla F_2
+4\mu_rw\cdot\nabla F_2\big)dx,
\end{align*}
due to
\begin{align*}
\int\nabla F_1\cdot\curl V_1dx=0,
\int\nabla F_2\cdot\curl V_2dx=0, \int\nabla F_1\cdot\curl wdx=0,
\int\nabla F_2\cdot\curl udx=0,
\|\rho\|_{L^\infty}\le 4\bar{\rho}.
\end{align*}
Thus, we get that
\begin{align}\label{3.30}
\frac{\|\nabla F_1\|_{L^2}^2+\|\nabla F_2\|_{L^2}^2}{16\bar{\rho}}&\le
\frac{1}{4}\big(\|\sqrt{\rho}u_t\|_{L^2}^2+\|\sqrt{\rho}w_t\|_{L^2}^2\big)
+\frac{1}{8\bar{\rho}}\int\rho u\cdot\nabla u\cdot\nabla F_1dx\nonumber\\
&\quad+\frac{1}{8\bar{\rho}}\int\rho u\cdot\nabla w\cdot\nabla F_2dx
+\frac{\mu_r}{2\bar{\rho}}\int w\cdot\nabla F_2dx.
\end{align}
Similarly, one has
\begin{align}\label{3.31}
\frac{(\mu+\mu_r)^2\|\nabla V_1\|_{L^2}^2+(c_a+c_d)^2\|\nabla V_2\|_{L^2}^2}{16\bar{\rho}}
&\le
\frac{1}{4}\big(\|\sqrt{\rho}u_t\|_{L^2}^2+\|\sqrt{\rho}w_t\|_{L^2}^2\big)
+\frac{1}{8\bar{\rho}}\int\rho u\cdot\nabla u\cdot\curl V_1dx\nonumber\\
&\quad+\frac{1}{8\bar{\rho}}\int\rho u\cdot\nabla w\cdot\curl V_2dx
+\frac{\mu_r}{2\bar{\rho}}\int w\cdot\curl V_2dx\nonumber\\
&\quad+\frac{\mu_r}{2\bar{\rho}}\int V_1\cdot\curl V_2dx.
\end{align}
Substituting \eqref{3.30} and \eqref{3.31} into \eqref{3.27}, we find that
\begin{align}\label{z3.31}
&\frac12\frac{d}{dt}\Big(\mu\|V_1\|_{L^2}^2
+(c_a+c_d)\|V_2\|_{L^2}^2+\frac{\|F_1\|_{L^2}^2}{2\mu+\lambda}+\frac{\|F_2\|_{L^2}^2}{2c_d+c_0}
+\mu_r\|V_1-2w\|_{L^2}^2\Big)+\frac12\|\sqrt{\rho}u_t\|_{L^2}^2\nonumber\\
&\quad+\frac{j_I}{2}\|\sqrt{\rho}w_t\|_{L^2}^2+\frac{1}{16\bar{\rho}}
\big(\|\nabla F_1\|_{L^2}^2+\|\nabla F_2\|_{L^2}^2
+(\mu+\mu_r)^2\|\nabla V_1\|_{L^2}^2+(c_a+c_d)^2\|\nabla V_2\|_{L^2}^2\big)\nonumber\\
&\le C\int\rho|u||\nabla u|\Big(|u_t|+\frac{1}{\bar{\rho}}(|\nabla F_1|+|\nabla V_1|)\Big)dx+C\int\rho|u||\nabla w|
\Big(|w_t|+\frac{1}{\bar{\rho}}(|\nabla F_2|+|\nabla V_2|)\Big)dx\nonumber\\
&\quad+C\int(|\nabla\theta|+\rho\theta|u|)|\nabla F_1|dx
+C\int(|\nabla u|^2+|\nabla w|^2+\rho\theta|\nabla u|+|w|^2)|F_1|dx\nonumber\\
&\quad+\frac{C}{\bar{\rho}}\int|w||\nabla F_2|dx+\frac{C}{\bar{\rho}}\int(|w|+|\nabla u|)|\nabla V_2|dx
\triangleq\sum_{i=1}^6J_i.
\end{align}

3. It follows from H\"older's, Young's, and Gagliardo-Nirenberg inequalities that
\begin{align*}
J_1&\le C\bar{\rho}^\frac12\||u||\nabla u|\|_{L^2}\|\sqrt{\rho}u_t\|_{L^2}
+C\||u||\nabla u|\|_{L^2}(\|\nabla F_1\|_{L^2}+\|\nabla V_1\|_{L^2})\nonumber\\
&\le \frac{1}{4}\|\sqrt{\rho}u_t\|_{L^2}^2+C\bar{\rho}\||u||\nabla u|\|_{L^2}^2
+\frac{1}{160\bar{\rho}}(\|\nabla F_1\|_{L^2}^2+(\mu+\mu_r)^2\|\nabla V_1\|_{L^2}^2),\\
J_2&\le C\bar{\rho}^\frac12\||u||\nabla w|\|_{L^2}\|\sqrt{\rho}w_t\|_{L^2}
+C\||u||\nabla w|\|_{L^2}(\|\nabla F_2\|_{L^2}+\|\nabla V_2\|_{L^2})\nonumber\\
&\le \frac{j_I}{4}\|\sqrt{\rho}w_t\|_{L^2}^2+C\bar{\rho}\||u||\nabla w|\|_{L^2}^2
+\frac{1}{160\bar{\rho}}(\|\nabla F_2\|_{L^2}^2+(c_a+c_d)^2\|\nabla V_2\|_{L^2}^2).
\end{align*}
By virtue of \eqref{3.16}, we deduce that
\begin{align}
I_3&\le C\|\nabla\theta\|_{L^2}\|\nabla F_1\|_{L^2}+C\|\rho\theta u\|_{L^2}\|\nabla F_1\|_{L^2}\nonumber\\
&\le C\|\nabla\theta\|_{L^2}\|\nabla F_1\|_{L^2}
+C\bar{\rho}^\frac12\|\rho\|_{L^3}^\frac14\|\sqrt{\rho}\theta\|_{L^2}^\frac12\|\nabla\theta\|_{L^2}^\frac12
\||u||\nabla u|\|_{L^2}^\frac12\|\nabla F_1\|_{L^2}\nonumber\\
&\le \frac{1}{160\bar{\rho}}\|\nabla F_1\|_{L^2}^2+C(\bar{\rho}^2\|\rho\|_{L^3}^\frac12\|\sqrt{\rho}\theta\|_{L^2}
+\bar{\rho})(\|\nabla\theta\|_{L^2}^2+\||u||\nabla u|\|_{L^2}^2).
\end{align}
Noticing that
\begin{align*}
\|\nabla u\|_{L^6}&\le C(\|\curl u\|_{L^6}+\|\divv u\|_{L^6})
\le C(\|V_1\|_{L^6}+\|F_1\|_{L^6}+\|\rho\theta\|_{L^6})\nonumber\\
&\le C(\|\nabla V_1\|_{L^2}+\|\nabla F_1\|_{L^2}+\bar{\rho}\|\nabla\theta\|_{L^2}),\\
\|\nabla w\|_{L^6}&\le C(\|\curl w\|_{L^6}+\|\divv w\|_{L^6})
\le C(\|V_2\|_{L^6}+\|F_2\|_{L^6})\le C(\|\nabla V_2\|_{L^2}+\|\nabla F_2\|_{L^2}),
\end{align*}
which along with H\"older's inequality, Young's inequality, and Gagliardo-Nirenberg inequality yields that
\begin{align*}
I_4&\le C\|\nabla u\|_{L^2}\|\nabla u\|_{L^6}\|F_1\|_{L^3}
+C\|\nabla w\|_{L^2}\|\nabla w\|_{L^6}\|F_1\|_{L^3}\nonumber\\
&\quad+C\|\nabla u\|_{L^2}\|\rho\theta\|_{L^6}\|F_1\|_{L^3}+C\|w\|_{L^2}\|w\|_{L^6}\|F_1\|_{L^3}\nonumber\\
&\le C\|\nabla u\|_{L^2}(\|\nabla V_1\|_{L^2}+\|\nabla F_1\|_{L^2}
+\bar{\rho}\|\nabla\theta\|_{L^2})\|F_1\|_{L^2}^\frac12\|\nabla F_1\|_{L^2}^\frac12\nonumber\\
&\quad+C(\bar{\rho}\|\nabla u\|_{L^2}\|\nabla\theta\|_{L^2}+\|w\|_{L^2}\|\nabla w\|_{L^2})
\|F_1\|_{L^2}^\frac12\|\nabla F_1\|_{L^2}^\frac12\nonumber\\
&\quad+C\|\nabla w\|_{L^2}(\|\nabla V_2\|_{L^2}+\|\nabla F_2\|_{L^2})\|F_1\|_{L^2}^\frac12\|\nabla F_1\|_{L^2}^\frac12\nonumber\\
&\le \frac{1}{160\bar{\rho}}\Big(\|\nabla F_1\|_{L^2}^2+\|\nabla F_2\|_{L^2}^2
+(\mu+\mu_r)^2\|\nabla V_1\|_{L^2}^2+(c_a+c_d)^2\|\nabla V_2\|_{L^2}^2\Big)\nonumber\\
&\quad+C\bar{\rho}^3(\|\nabla u\|_{L^2}^4+\|\nabla w\|_{L^2}^4)\|F_1\|_{L^2}^2+C\bar{\rho}\|\nabla\theta\|_{L^2}^2
+\frac{\varepsilon}{\bar{\rho}^2}\|w\|_{L^2}^2,\\
I_5+I_6&\le \frac{1}{160\bar{\rho}}\big(\|\nabla F_2\|_{L^2}^2
+(c_a+c_d)^2\|\nabla V_2\|_{L^2}^2\big)+\frac{C}{\bar{\rho}^2}\big(\|w\|_{L^2}^2+\|\nabla u\|_{L^2}^2\big).
\end{align*}
Inserting the above estimates for $I_i\ (i=1, 2, \cdots, 6)$ into \eqref{z3.31} gives that
\begin{align}\label{3.36}
&\frac{d}{dt}\Big(\mu\|V_1\|_{L^2}^2
+(c_a+c_d)\|V_2\|_{L^2}^2+\frac{\|F_1\|_{L^2}^2}{2\mu+\lambda}+\frac{\|F_2\|_{L^2}^2}{2c_d+c_0}
+\mu_r\|V_1-2w\|_{L^2}^2\Big)+\frac12\|\sqrt{\rho}u_t\|_{L^2}^2\nonumber\\
&\quad+\frac{j_I}{2}\|\sqrt{\rho}w_t\|_{L^2}^2+\frac{1}{16\bar{\rho}}\big[\|\nabla F_1\|_{L^2}^2+\|\nabla F_2\|_{L^2}^2
+(\mu+\mu_r)^2\|\nabla V_1\|_{L^2}^2+(c_a+c_d)^2\|\nabla V_2\|_{L^2}^2\big]\nonumber\\
&\le C\big(\bar{\rho}^2\|\rho\|_{L^3}^\frac12\|\sqrt{\rho}\theta\|_{L^2}
+\bar{\rho}\big)\big(\|\nabla\theta\|_{L^2}^2+\||u||\nabla u|\|_{L^2}^2+\||u||\nabla w|\|_{L^2}^2\big)\nonumber\\
&\quad+\frac{C}{\bar{\rho}^2}\big(\|w\|_{L^2}^2+\|\nabla u\|_{L^2}^2\big)+C\bar{\rho}^3\big(\|\nabla u\|_{L^2}^4+\|\nabla w\|_{L^2}^4\big)\|F_1\|_{L^2}^2.
\end{align}
Due to
\begin{align*}
&\|\nabla u\|_{L^2}\le C\big(\|V_1\|_{L^2}+\|F_1\|_{L^2}+\|\rho\theta\|_{L^2}\big)\le
 C\big(\|V_1\|_{L^2}+\|F_1\|_{L^2}+\sqrt{\bar{\rho}}\|\sqrt{\rho}\theta\|_{L^2}\big),\\
&\|\nabla w\|_{L^2}\le C\big(\|V_2\|_{L^2}+\|F_2\|_{L^2}\big),
\end{align*}
and
\begin{align*}
\|w\|_{L^2}^2&=\frac14\|V_1-2w-V_1\|_{L^2}^2< \|V_1-2w\|_{L^2}^2+\|V_1\|_{L^2}^2,
\end{align*}
we get \eqref{3.21} after integrating \eqref{3.36} over $[0,T]$.
\end{proof}

To obtain the estimate of $L^\infty(0, T; L^\infty)$ for the density, we need the following result. The detailed proof can be found in \cite[Proposition 2.6]{JL20}, and we omit it for simplicity.
\begin{lemma}\label{l35}
Under the condition \eqref{3.17}, it holds that
\begin{align}
\sup_{0\le t\le T}\|\rho\|_{L^\infty}\le
\|\rho_0\|_{L^\infty}e^{C\bar{\rho}^\frac23\sup\limits_{0\le t\le T}\|\sqrt{\rho}u\|_{L^2}^\frac13\|\sqrt{\rho}|u|^2\|_{L^2}^\frac13
+C\bar{\rho}\int_0^T\|\nabla u\|_{L^2}\|(\nabla F_1, \nabla V_1, \bar{\rho}\nabla\theta)\|_{L^2}dt}.
\end{align}
\end{lemma}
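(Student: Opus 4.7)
The plan is to propagate the $L^\infty$ bound on $\rho$ via the characteristic form of the continuity equation. Let $X(\cdot;x_0)$ denote the Lagrangian flow generated by $u$, so $\dot X=u(X,\cdot)$ with $X(0;x_0)=x_0$. From $\eqref{a1}_1$ we have $D_t\ln\rho=-\divv u$. Substituting the effective viscous flux identity $\divv u=(F_1+p)/(2\mu+\lambda)$ (from \eqref{2.3}) and using $p=R\rho\theta\ge0$, we integrate along $X$ and discard the non-positive $p$-contribution to obtain
\[
\ln\rho(X(t),t)\le \ln\rho_0(x_0)-\frac{1}{2\mu+\lambda}\int_0^t F_1(X(s),s)\,ds.
\]
Thus the proof reduces to an $x_0$-uniform upper bound for $-\int_0^t F_1(X(s),s)\,ds$.

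For this I would follow the device used in \cite{JL20}: rewrite $F_1$ via \eqref{dxy} as
\[
F_1=\partial_s\bigl[\Delta^{-1}\divv(\rho u)\bigr]+\Delta^{-1}\divv\divv(\rho u\otimes u),
\]
and set $\mathcal H:=\Delta^{-1}\divv(\rho u)$. Along $X$, the chain rule gives $\mathcal H_s(X,s)=\frac{d}{ds}[\mathcal H(X(s),s)]-u\cdot\nabla\mathcal H(X,s)$, so integrating from $0$ to $t$ yields
\[
\int_0^t F_1(X,s)\,ds=\mathcal H(X(t),t)-\mathcal H(x_0,0)+\int_0^t\bigl[-u\cdot\nabla\mathcal H+\Delta^{-1}\divv\divv(\rho u\otimes u)\bigr](X,s)\,ds.
\]
The task is now to bound the two boundary contributions of $\mathcal H$ in $L^\infty_x$, and to estimate the commutator-type time-integral.

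For the boundary term I would proceed by interpolation. Applying Gagliardo-Nirenberg to $\mathcal H$ together with the Riesz-transform bound $\|\nabla\mathcal H\|_{L^p}\lesssim\|\rho u\|_{L^p}$ and Sobolev embedding, and then using the weighted H\"older estimate
\[
\int\rho^3|u|^3\,dx\le\bar\rho^2\int\rho|u|^3\,dx\le\bar\rho^2\|\sqrt\rho u\|_{L^2}\|\sqrt\rho|u|^2\|_{L^2},
\]
converts the resulting $L^p$ information about $\rho u$ into the exact factor $\bar\rho^{2/3}\|\sqrt\rho u\|_{L^2}^{1/3}\|\sqrt\rho|u|^2\|_{L^2}^{1/3}$ that appears in the exponent of the claimed bound. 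For the time-integral, I would use H\"older with $\|u\|_{L^6}\le C\|\nabla u\|_{L^2}$ and $\|\rho u\|_{L^6}\le C\bar\rho\|\nabla u\|_{L^2}$ to peel off one factor $\|\nabla u\|_{L^2}$, and then invoke the momentum equation together with Lemma \ref{l23} so that the remaining Riesz-transform pieces are controlled in $L^2$ by $\|\nabla F_1\|_{L^2}+\|\nabla V_1\|_{L^2}+\bar\rho\|\nabla\theta\|_{L^2}$. This produces precisely the integrand $\|\nabla u\|_{L^2}\|(\nabla F_1,\nabla V_1,\bar\rho\nabla\theta)\|_{L^2}$ in the stated bound.

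The main obstacle is the first interpolation: preserving the exponents $1/3,1/3$ and the weight $\bar\rho^{2/3}$ in the boundary estimate of $\mathcal H$ forces a nonstandard splitting of $\rho u$ between mass-weighted factors $(\sqrt\rho u,\sqrt\rho|u|^2)$ and the Sobolev factor $\|u\|_{L^6}$, rather than the crude $\|\rho u\|_{L^2}^{1/2}\|\rho u\|_{L^6}^{1/2}$ interpolation. A secondary difficulty is recasting $-u\cdot\nabla\mathcal H+\Delta^{-1}\divv\divv(\rho u\otimes u)$ so that no Calder\'on commutator is invoked directly; using the identity $\rho\dot u=\nabla F_1-(\mu+\mu_r)\curl V_1+2\mu_r\curl w$ together with \eqref{2.5} sidesteps this by trading the commutator for $(\nabla F_1,\nabla V_1)$ in $L^2$. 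Once both contributions have been bounded, taking the supremum over $x_0$ of the scalar inequality $\ln\rho(X(t),t)\le\ln\|\rho_0\|_{L^\infty}+(\text{estimated terms})$ and exponentiating yields the claimed inequality.
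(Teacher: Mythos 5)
The paper itself gives no proof of Lemma \ref{l35}: it defers entirely to \cite[Proposition 2.6]{JL20}, and your plan is precisely that argument transplanted to the micropolar setting. The transplantation is legitimate because, as you use, the extra term $2\mu_r\curl w$ in the momentum equation is annihilated by $\Delta^{-1}\divv$ (this is exactly how \eqref{dxy} is derived), so the Lagrangian identity $\ln\rho(X(t),t)\le\ln\rho_0(x_0)-\frac{1}{2\mu+\lambda}\int_0^tF_1(X,s)\,ds$, the splitting of $\int_0^tF_1(X,s)\,ds$ into the boundary values of $\mathcal H=\Delta^{-1}\divv(\rho u)$ plus the commutator integral, and the conversion of the commutator into $\|\nabla u\|_{L^2}\|(\nabla F_1,\nabla V_1,\bar\rho\nabla\theta)\|_{L^2}$ via Lemma \ref{l23} all go through unchanged. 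So the route is the intended one.

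One step as you describe it would not close. For the boundary term you propose to reach $\bar\rho^{2/3}\|\sqrt\rho u\|_{L^2}^{1/3}\|\sqrt\rho|u|^2\|_{L^2}^{1/3}$ through the weighted H\"older bound $\int\rho^3|u|^3\,dx\le\bar\rho^2\|\sqrt\rho u\|_{L^2}\|\sqrt\rho|u|^2\|_{L^2}$, i.e.\ through an $L^3$ bound on $\rho u=\nabla\mathcal H$ (up to Riesz transforms). But $\dot W^{1,3}(\mathbb{R}^3)$ does not embed into $L^\infty$; $p=3$ is exactly the borderline case, so no Gagliardo--Nirenberg/Sobolev chain can produce $\|\mathcal H\|_{L^\infty}\lesssim\|\rho u\|_{L^3}$. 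The correct chain is
\begin{align*}
\|\mathcal H\|_{L^\infty}\le C\|\mathcal H\|_{L^6}^{\frac13}\|\nabla\mathcal H\|_{L^4}^{\frac23}
\le C\|\rho u\|_{L^2}^{\frac13}\|\rho u\|_{L^4}^{\frac23},
\end{align*}
followed by the two \emph{separate} weighted H\"older estimates $\|\rho u\|_{L^2}\le\bar\rho^{1/2}\|\sqrt\rho u\|_{L^2}$ and $\|\rho u\|_{L^4}\le\bar\rho^{3/4}\|\sqrt\rho|u|^2\|_{L^2}^{1/2}$; multiplying out gives exactly $\bar\rho^{2/3}\|\sqrt\rho u\|_{L^2}^{1/3}\|\sqrt\rho|u|^2\|_{L^2}^{1/3}$ (which dominates the $L^3$ quantity you wrote, by interpolation, so the final exponent is the same). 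With that replacement, and with the commutator step carried out via the Coifman--Meyer/Gagliardo--Nirenberg estimates as in \cite{JL20} rather than merely asserted, your outline becomes the paper's (i.e.\ the cited reference's) proof.
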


\begin{lemma}\label{l36}
Let
\begin{align*}
N_T\triangleq\bar{\rho}\big[\|\rho\|_{L^3}+\bar{\rho}^2
\big(\|\sqrt{\rho}u\|_{L^2}^2+\|\sqrt{\rho}w\|_{L^2}^2\big)\big]\big(\|\nabla u\|_{L^2}^2+\|w\|_{H^1}^2+
\bar{\rho}\|\sqrt{\rho}\Psi\|_{L^2}^2\big).
\end{align*}
There exists a positive constant $\eta_0$ depending only on $\mu$, $\lambda$, $\mu_r$, $c_0$, $c_a$, $c_d$, $j_I$, $c_v$, $R$, and $\kappa$ such that if
\begin{align}\label{3.41}
\eta\le\eta_0,\quad \sup_{0\le t\le T}\|\rho\|_{L^\infty}\le 4\bar{\rho}, \quad N_T\le \sqrt{\eta},
\end{align}
then it holds that
\begin{align}
&\sup_{0\le t\le T}\|\rho\|_{L^3}+\Big(\int_0^T\int\rho^3pdxdt\Big)^\frac13\le C(\|\rho_0\|_{L^3}
+\bar{\rho}^2(\|\sqrt{\rho_0}u_0\|_{L^2}^2+\|\sqrt{\rho_0}w_0\|_{L^2}^2)),
\label{3.42} \\
&\bar{\rho}^2\Big(\sup_{0\le t\le T}(\|\sqrt{\rho}u\|_{L^2}^2+\|\sqrt{\rho}w\|_{L^2}^2)
+\int_0^T\|(\nabla u, \nabla w, w)\|_{L^2}^2dt\Big)\nonumber\\
&\le C(\|\rho_0\|_{L^3}
+\bar{\rho}^2(\|\sqrt{\rho_0}u_0\|_{L^2}^2
+\|\sqrt{\rho_0}w_0\|_{L^2}^2)), \label{3.43} \\
&\sup_{0\le t\le T}(\|\nabla u\|_{L^2}^2+\|w\|_{H^1}^2+\bar{\rho}\|\sqrt{\rho}\Psi\|_{L^2}^2)
+\int_0^T\Big\|\Big(\sqrt{\rho}u_t, \sqrt{\rho}w_t, \frac{\nabla F_1}{\sqrt{\bar{\rho}}},
 \frac{\nabla F_2}{\sqrt{\bar{\rho}}}, \frac{\nabla V_1}{\sqrt{\bar{\rho}}},
  \frac{\nabla V_2}{\sqrt{\bar{\rho}}}\Big)\Big\|_{L^2}^2dt\nonumber\\
& \quad +\bar{\rho}\int_0^T\|(|w||\nabla u|, |u||\nabla w|, |w||\nabla w|, \nabla\theta, |u||\nabla u|, |u||w|, |w|^2)\|_{L^2}^2dt
\nonumber\\
& \le C(\bar{\rho}\|\sqrt{\rho_0}\Psi_0\|_{L^2}^2+\|\nabla u_0\|_{L^2}^2
+\|w_0\|_{H^1}^2), \label{3.44}  \\
&\sup_{0\le t\le T}\|\rho\|_{L^\infty}\le \bar{\rho}e^{CN_0^\frac16+CN_0^\frac12}. \label{3.45}
\end{align}
\end{lemma}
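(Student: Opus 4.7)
\textbf{Proof plan for Lemma \ref{l36}.}

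The strategy is to combine the dissipation inequalities of Lemmas~\ref{l31}–\ref{l35}, weighted by appropriate powers of $\bar{\rho}$, and to absorb every ``bad'' right-hand side term using the a priori smallness $N_T\le\sqrt{\eta}$ together with $\|\rho\|_{L^\infty}\le 4\bar{\rho}$. The key observation is that $N_T$ is engineered as a product $\bar\rho A_T \cdot B_T$, where $A_T:=\sup_{[0,T]}\bigl[\|\rho\|_{L^3}+\bar\rho^2(\|\sqrt{\rho}u\|_{L^2}^2+\|\sqrt{\rho}w\|_{L^2}^2)\bigr]$ and $B_T:=\sup_{[0,T]}\bigl(\|\nabla u\|_{L^2}^2+\|w\|_{H^1}^2+\bar\rho\|\sqrt{\rho}\Psi\|_{L^2}^2\bigr)$, so that every junk term factorises into a product containing at least one ``small'' factor controlled by $N_T$.

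\textbf{Step 1: \eqref{3.42} and \eqref{3.43} together.} Multiply the energy inequality \eqref{3.3} by $\bar\rho^{2}$ and add it to \eqref{3.18}. On the right-hand side we obtain the desired initial datum, $C\bar\rho^{2}\int\|\rho\|_{L^3}^{2}\|\nabla\theta\|_{L^2}^{2}\,dt$ (from \eqref{3.3}) and $C\bar\rho^{2/3}\sup(\|\sqrt\rho u\|_{L^2}^{1/3}\|\sqrt\rho|u|^2\|_{L^2}^{1/3}\|\rho\|_{L^3}^{3})+C\bar\rho^{2}\int\|\rho\|_{L^3}^{2}\|\nabla u\|_{L^2}^{2}\,dt$ (from \eqref{3.18}). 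Each bad term is estimated by pulling out $\sup\bar\rho\|\rho\|_{L^3}\le A_T$ and then recognising that the remaining factor involves either $\bar\rho\|\sqrt\rho\Psi\|_{L^2}^{2}$ or $\bar\rho\|\nabla u\|_{L^2}^{2}$, hence a product of the form $A_T\cdot\sqrt\eta\cdot(\text{LHS quantity})$. For example,
\[
\bar\rho^{2}\!\int\|\rho\|_{L^3}^{2}\|\nabla u\|_{L^2}^{2}\,dt\le \sup(\bar\rho\|\rho\|_{L^3})\cdot \!\int \!\bar\rho\|\rho\|_{L^3}\|\nabla u\|_{L^2}^{2}\,dt\le A_T\cdot N_T\cdot T_{\nabla u}^{-1}\cdot(\bar\rho^{2}\!\int\|\nabla u\|_{L^2}^{2}\,dt),
\]
which is absorbed for $\eta_0$ small; the sup-type term is handled similarly using $\|\sqrt\rho|u|^2\|_{L^2}^{2}\le 2\|\rho\|_{L^\infty}\|\sqrt\rho\Psi\|_{L^2}^{2}\lesssim\bar\rho B_T$.

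\textbf{Step 2: \eqref{3.44}.} Add the higher-order inequality \eqref{3.21} to $\bar\rho$ times \eqref{3.5}; this yields the desired LHS (time-sup of $\|\nabla u\|_{L^2}^{2}+\|w\|_{H^1}^{2}+\bar\rho\|\sqrt\rho\Psi\|_{L^2}^{2}$ plus all the integral dissipations). Each of the half-dozen junk terms on the RHS is then shown to be of the form (bounded by Step~1) $\cdot\,(\sqrt\eta$ or smaller$)$; for instance
$\bar\rho^{3}\int\|\nabla u\|_{L^2}^{4}\|\nabla w\|_{L^2}^{2}\,dt\le B_T^{2}\cdot \bar\rho\!\int\|\nabla w\|_{L^2}^{2}\,dt\le B_T^{2}\bar\rho^{-1}A_T$ by Step~1, and the coupling
$\bar\rho^{2}\|\rho\|_{L^3}^{1/2}\|\sqrt\rho\theta\|_{L^2}\cdot(\|\nabla\theta\|_{L^2}^{2}+\||u||\nabla u|\|_{L^2}^{2}+\||u||\nabla w|\|_{L^2}^{2})$ from \eqref{3.21} is controlled via $(\bar\rho\|\rho\|_{L^3})^{1/2}(\bar\rho\|\sqrt\rho\Psi\|_{L^2}^{2})^{1/2}\le N_T^{1/2}\le\eta^{1/4}$, times the LHS dissipation of \eqref{3.5}. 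Choosing $\eta_0$ small, absorb and conclude \eqref{3.44}.

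\textbf{Step 3: \eqref{3.45}.} Apply Lemma~\ref{l35}. The first term in the exponent, $\bar\rho^{2/3}\sup(\|\sqrt\rho u\|_{L^2}^{1/3}\|\sqrt\rho|u|^2\|_{L^2}^{1/3})$, is bounded by using $\|\sqrt\rho u\|_{L^2}^{2}\lesssim \bar\rho^{-2}A_T$ (by \eqref{3.43}) and $\|\sqrt\rho|u|^2\|_{L^2}^{2}\le 2\bar\rho\|\sqrt\rho\Psi\|_{L^2}^{2}\lesssim B_T$ (by \eqref{3.44}); a short calculation shows the product is $\lesssim N_0^{1/6}$. The time-integral term $\bar\rho\!\int\|\nabla u\|_{L^2}\|(\nabla F_1,\nabla V_1,\bar\rho\nabla\theta)\|_{L^2}\,dt$ is estimated by Cauchy–Schwarz in $t$: $\bar\rho^{2}\!\int\|\nabla u\|_{L^2}^{2}\,dt\lesssim A_T$ (Step 1) and $\int\|(\nabla F_1,\nabla V_1,\bar\rho\nabla\theta)\|_{L^2}^{2}\,dt$ is controlled by $\bar\rho B_T$ via \eqref{3.44}, so the product is $\lesssim (A_T B_T)^{1/2}\lesssim N_0^{1/2}$, giving the claimed exponent.

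\textbf{Main obstacle.} Step~2 is the technically demanding part, because of the large number of nonlinear couplings between $u$, $w$, $\theta$, and the density. The bookkeeping requires carefully tracking powers of $\bar\rho$ so that every junk term acquires, after Hölder/Young splitting, at least one factor bounded by $\sqrt\eta$ via $N_T\le\sqrt\eta$. The weighting $\bar\rho\cdot\eqref{3.5}+\eqref{3.21}$ and the relative weighting of \eqref{3.3} and \eqref{3.18} in Step~1 are chosen precisely so that this absorption closes cleanly.
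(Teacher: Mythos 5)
Your overall strategy (combine Lemmas \ref{l31}--\ref{l35} and absorb the nonlinear terms via $N_T\le\sqrt{\eta}$) is the paper's, and your Step 3 is essentially identical to the paper's derivation of \eqref{3.45}. But the order of your Steps 1 and 2 creates a genuine gap. Your Step 1 attempts to close \eqref{3.42}--\eqref{3.43} from \eqref{3.3} and \eqref{3.18} alone, yet the right-hand side of \eqref{3.3} contains $C\bar{\rho}^{2}\int_0^T\|\rho\|_{L^3}^{2}\|\nabla\theta\|_{L^2}^{2}dt$, and $\int_0^T\|\nabla\theta\|_{L^2}^{2}dt$ appears nowhere on the left of the inequalities you are combining in Step 1: it is controlled only by the dissipation in \eqref{3.5}, i.e.\ by \eqref{3.44}. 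Your claim that ``the remaining factor involves either $\bar{\rho}\|\sqrt{\rho}\Psi\|_{L^2}^2$ or $\bar{\rho}\|\nabla u\|_{L^2}^2$'' simply does not cover $\|\nabla\theta\|_{L^2}^2$, which is not dominated by $\|\sqrt{\rho}\Psi\|_{L^2}^2$. Meanwhile your Step 2 invokes Step 1 (e.g.\ ``$\le B_T^2\bar{\rho}^{-1}A_T$ by Step 1''), so the two steps are circular as written. Your displayed estimate for the $\bar{\rho}^{2}\int\|\rho\|_{L^3}^{2}\|\nabla u\|_{L^2}^{2}dt$ term is also not a valid inequality: the factor ``$N_T\cdot T_{\nabla u}^{-1}$'' either smuggles in a dependence on $T$ (fatal for a time-independent bound) or is undefined.

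The paper breaks the circle by proving \eqref{3.44} \emph{first}: it substitutes the raw energy inequality \eqref{3.3} into the quartic terms of \eqref{3.5} and \eqref{3.21}, so that $\int_0^T(\|\nabla u\|_{L^2}^2+\|w\|_{H^1}^2)dt$ is replaced by initial data plus $C\|\rho\|_{L^3}^2\int_0^T\|\nabla\theta\|_{L^2}^2dt$, and the latter carries a prefactor of order $\eta$ and is absorbed into the $\nabla\theta$-dissipation already present on the left of \eqref{3.5} (see \eqref{3.47}--\eqref{3.48} and \eqref{3.50}--\eqref{3.51}). Only with \eqref{3.44} in hand does it return to \eqref{3.3} and \eqref{3.18} to obtain \eqref{3.55}--\eqref{3.57}, hence \eqref{3.42}--\eqref{3.43}. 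To repair your argument, either adopt this order or set up a single simultaneous absorption of all four inequalities; as proposed, Step 1 cannot close.
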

\begin{proof}[Proof]
1. It follows from \eqref{3.41} and Lemma \ref{l32} that
\begin{align}\label{3.46}
&\sup_{0\le t\le T}\|\sqrt{\rho}\Psi\|_{L^2}^2
+\int_0^T\big(\|\nabla\theta\|_{L^2}^2+\||u||\nabla u|\|_{L^2}^2+\||w||\nabla w|\|_{L^2}^2\big)dt\nonumber\\
&\quad+\int_0^T\big(\||w||\nabla u|\|_{L^2}^2
+\||u||\nabla w|\|_{L^2}^2+\||u||w|\|_{L^2}^2+\|w\|_{L^4}^4\big)dt\nonumber\\
&\le C\|\sqrt{\rho_0}\Psi_0\|_{L^2}^2
+C\eta^\frac14\int_0^T\big(\|\nabla\theta\|_{L^2}^2+\||u||\nabla u|\|_{L^2}^2\big)dt+C\int_0^T\|\nabla u\|_{L^2}^4\|\nabla w\|_{L^2}^2dt\nonumber\\
&\quad+\frac{\varepsilon C}{\bar{\rho}^2}\int_0^T\|w\|_{L^2}^2dt
+C\bar{\rho}^2\int_0^T\|\nabla w\|_{L^2}^4\|\nabla u\|_{L^2}^2dt.
\end{align}
By \eqref{3.41} and \eqref{3.3}, we get that
\begin{align}\label{3.47}
&C\bar{\rho}^3\int_0^T\big(\|\nabla u\|_{L^2}^4\|\nabla w\|_{L^2}^2+\|\nabla w\|_{L^2}^4\|\nabla u\|_{L^2}^2+\|w\|_{L^2}^6\big)dt\nonumber\\
&\le C\sup_{0\le t\le T}\big(\|\nabla u\|_{L^2}^2+\|w\|_{H^1}^2\big)
\sup_{0\le t\le T}\big(\|\nabla u\|_{L^2}^2+\|w\|_{H^1}^2\big)\nonumber\\
&\quad\times\left[\sup_{0\le t\le T}\big(\|\sqrt{\rho}u\|_{L^2}^2
+\|\sqrt{\rho}w\|_{L^2}^2\big)+\sup_{0\le t\le T}\|\rho\|_{L^3}^2\int_0^T\|\nabla\theta\|_{L^2}^2dt\right]\nonumber\\
&\le C\eta^\frac12\sup_{0\le t\le T}\big(\|\nabla u\|_{L^2}^2+\|w\|_{H^1}^2\big)
+C\eta\int_0^T\|\nabla\theta\|_{L^2}^2dt,
\end{align}
and
\begin{align}\label{3.48}
\frac{C\varepsilon }{\bar{\rho}^2}\int_0^T\|w\|_{L^2}^2dt
&\le \frac{C\varepsilon }{\bar{\rho}^2}\left(\|\sqrt{\rho_0}u_0\|_{L^2}^2
+\|\sqrt{\rho_0}w_0\|_{L^2}^2
+\|\rho\|_{L^3}^2\int_0^T\|\nabla\theta\|_{L^2}^2dt\right)\nonumber\\
&\le \frac{C\varepsilon }{\bar{\rho}^\frac53}\big(\|\nabla u_0\|_{L^2}^2+\|\nabla w_0\|_{L^2}^2\big)
+\frac{ C\|\rho_0\|_{L^1}^\frac23\varepsilon }{\bar{\rho}^\frac23}\int_0^T\|\nabla\theta\|_{L^2}^2dt,
\end{align}
owing to
\begin{align*}
\|\sqrt{\rho_0}u_0\|_{L^2}^2+\|\sqrt{\rho_0}w_0\|_{L^2}^2
&\le \|\rho\|_{L^\frac32}\big(\|u_0\|_{L^6}^2+\|w_0\|_{L^6}^2\big) \notag \\
& \le C\|\rho_0\|_{L^\frac32}\big(\|\nabla u_0\|_{L^2}^2+\|\nabla w_0\|_{L^2}^2\big)\nonumber\\
&\le C\|\rho_0\|_{L^1}^\frac23\|\rho_0\|_{L^\infty}^\frac13\big(\|\nabla u_0\|_{L^2}^2+\|\nabla w_0\|_{L^2}^2\big).
\end{align*}
Putting \eqref{3.47} and \eqref{3.48} into \eqref{3.46}, one obtains after choosing $\varepsilon\le \eta$ suitably small that
\begin{align*}
&\sup_{0\le t\le T}\|\sqrt{\rho}\Psi\|_{L^2}^2
+\int_0^T\big(\|\nabla\theta\|_{L^2}^2+\||u||\nabla u|\|_{L^2}^2+\||w||\nabla w|\|_{L^2}^2\big)dt\nonumber\\
&\quad+\int_0^T\big(\||w||\nabla u|\|_{L^2}^2
+\||u||\nabla w|\|_{L^2}^2+\||u||w|\|_{L^2}^2\big)dt\nonumber\\
&\le C\Big[\|\sqrt{\rho_0}\Psi_0\|_{L^2}^2+\frac{1}{\bar{\rho}^\frac53}\big(\|\nabla u_0\|_{L^2}^2+\|\nabla w_0\|_{L^2}^2\big)\Big]
+\frac{C\eta^\frac12}{\bar{\rho}}\sup_{0\le t\le T}\big(\|\nabla u\|_{L^2}^2+\|w\|_{H^1}^2\big),
\end{align*}
provided that $\eta_0$ is small enough, which yields that
\begin{align}\label{3.50}
&\bar{\rho}\sup_{0\le t\le T}\|\sqrt{\rho}\Psi\|_{L^2}^2
+\bar{\rho}\int_0^T\big(\|\nabla\theta\|_{L^2}^2+\||u||\nabla u|\|_{L^2}^2+\||w||\nabla w|\|_{L^2}^2\big)dt\nonumber\\
&\quad+\bar{\rho}\int_0^T\big(\||w||\nabla u|\|_{L^2}^2
+\||u||\nabla w|\|_{L^2}^2+\||u||w|\|_{L^2}^2\big)dt\nonumber\\
&\le C\Big[\bar{\rho}\|\sqrt{\rho_0}\Psi_0\|_{L^2}^2+\frac{1}{\bar{\rho}^\frac23}\big(\|\nabla u_0\|_{L^2}^2+\|\nabla w_0\|_{L^2}^2\big)\Big]
+C\eta^\frac12\sup_{0\le t\le T}\big(\|\nabla u\|_{L^2}^2+\|w\|_{H^1}^2\big).
\end{align}

2. We infer from \eqref{3.21} and \eqref{3.41} that
\begin{align}\label{3.51}
&\sup_{0\le t\le T}\big(\|\nabla u\|_{L^2}^2+\|w\|_{H^1}^2\big)
+\int_0^T\Big\|\Big(\sqrt{\rho}u_t, \sqrt{\rho}w_t, \frac{\nabla F_1}{\sqrt{\bar{\rho}}},
 \frac{\nabla F_2}{\sqrt{\bar{\rho}}}, \frac{\nabla V_1}{\sqrt{\bar{\rho}}},
  \frac{\nabla V_2}{\sqrt{\bar{\rho}}}\Big)\Big\|_{L^2}^2dt\nonumber\\
&\le C\big(\|\nabla u_0\|_{L^2}^2+\|w_0\|_{H^1}^2\big)
+C\bar{\rho}^3\int_0^T\big(\|\nabla u\|_{L^2}^4+\|\nabla w\|_{L^2}^4\big)\big(\|\nabla u\|_{L^2}^2
+\bar{\rho}\|\sqrt{\rho}\Psi\|_{L^2}^2\big)dt\nonumber\\
&\quad+C\int_0^T\big(\bar{\rho}^2\|\rho\|_{L^3}^\frac12\|\sqrt{\rho}\theta\|_{L^2}
+\bar{\rho}\big)\big(\|\nabla\theta\|_{L^2}^2+\||u||\nabla u|\|_{L^2}^2+\||u||\nabla w|\|_{L^2}^2\big)dt\nonumber\\
&\quad+C\bar{\rho}\sup_{0\le t\le T}\|\sqrt{\rho}\Psi\|_{L^2}^2
+\frac{C}{\bar{\rho}}\int_0^T\big(\|w\|_{L^2}^2+\|\nabla u\|_{L^2}^2\big)dt\nonumber\\
&\le C\big(\|\nabla u_0\|_{L^2}^2+\|w_0\|_{H^1}^2+\bar{\rho}\|\sqrt{\rho_0}\Psi_0\|_{L^2}^2\big)+C\eta^\frac12\sup_{0\le t\le T}\big(\|\nabla u\|_{L^2}^2+\|w\|_{H^1}^2\big)\nonumber\\
&\quad+C\eta^\frac14\int_0^T
\big(\|\nabla\theta\|_{L^2}^2+\||u||\nabla u|\|_{L^2}^2+\||u||\nabla w|\|_{L^2}^2\big)dt
+C\bar{\rho}\eta\int_0^T\|\nabla\theta\|_{L^2}^2dt,
\end{align}
where we have used
\begin{align*}
&C\bar{\rho}^3\int_0^T\big(\|\nabla u\|_{L^2}^4+\|\nabla w\|_{L^2}^4\big)\big(\|\nabla u\|_{L^2}^2
+\bar{\rho}\|\sqrt{\rho}\Psi\|_{L^2}^2\big)dt\nonumber\\
&\le C\bar{\rho}^3\sup_{0\le t\le T}\big(\|\nabla u\|_{L^2}^2
+\bar{\rho}\|\sqrt{\rho}\Psi\|_{L^2}^2\big)\sup_{0\le t\le T}\big(\|\nabla u\|_{L^2}^2+\|\nabla w\|_{L^2}^2\big)\nonumber\\
&\quad\times\Big(\sup_{0\le t\le T}\big(\|\sqrt{\rho}u\|_{L^2}^2+\|\sqrt{\rho}w\|_{L^2}^2\big)
+\|\rho\|_{L^3}^2\int_0^T\|\nabla\theta\|_{L^2}^2dt\Big)\nonumber\\
&\le C\eta^\frac12\sup_{0\le t\le T}(\|\nabla u\|_{L^2}^2+\|\nabla w\|_{L^2}^2)
+C\bar{\rho}\eta\int_0^T\|\nabla\theta\|_{L^2}^2dt,
\end{align*}
and
\begin{align*}
\frac{C}{\bar{\rho}}\int_0^T(\|w\|_{L^2}^2+\|\nabla u\|_{L^2}^2)dt
&\le \frac{C}{\bar{\rho}^2}\Big(\|\sqrt{\rho_0}u_0\|_{L^2}^2+\|\sqrt{\rho_0}w_0\|_{L^2}^2+\|\rho\|_{L^3}^2\int_0^T\|\nabla\theta\|_{L^2}^2dt\Big)\nonumber\\
&\le \frac{C}{\bar{\rho}^\frac23}(\|\nabla u_0\|_{L^2}^2+\|\nabla w_0\|_{L^2}^2)
+C\|\rho_0\|_{L^1}^\frac23\bar{\rho}^\frac13\int_0^T\|\nabla\theta\|_{L^2}^2dt\nonumber\\
&\le C\Big(\bar{\rho}\|\sqrt{\rho_0}\Psi_0\|_{L^2}^2+\frac{1}{\bar{\rho}^\frac23}(\|\nabla u_0\|_{L^2}^2+\|\nabla w_0\|_{L^2}^2)\Big)\nonumber\\
&\quad+C\eta^\frac12\sup_{0\le t\le T}(\|\nabla u\|_{L^2}^2+\|w\|_{H^1}^2).
\end{align*}
Combining \eqref{3.50} and \eqref{3.51}, we then obtain \eqref{3.44} after choosing $\eta_0$ sufficiently small.

3. With the help of \eqref{3.44}, we derive from \eqref{3.3} and \eqref{3.41} that
\begin{align}\label{3.55}
&\sup_{0\le t\le T}\big(\|\sqrt{\rho}u\|_{L^2}^2+\|\sqrt{\rho}w\|_{L^2}^2\big)+
\int_0^T\big(\|\nabla u\|_{L^2}^2+\|w\|_{H^1}^2\big)dt\nonumber\\
&\le C\big(\|\sqrt{\rho_0}u_0\|_{L^2}^2+\|\sqrt{\rho_0}w_0\|_{L^2}^2\big)
+\frac{C}{\bar{\rho}}\sup_{0\le t\le T}\|\rho\|_{L^3}^2\big(\|\nabla u_0\|_{L^2}^2+\|w_0\|_{H^1}^2+\bar{\rho}\|\sqrt{\rho_0}\Psi_0\|_{L^2}^2\big)\nonumber\\
&\le  C\big(\|\sqrt{\rho_0}u_0\|_{L^2}^2+\|\sqrt{\rho_0}w_0\|_{L^2}^2\big)+\frac{C}{\bar{\rho}}\sup_{0\le t\le T}\|\rho\|_{L^3}^2
\sup_{0\le t\le T}\big(\|\nabla u\|_{L^2}^2+\|w\|_{H^1}^2+\bar{\rho}\|\sqrt{\rho}\Psi\|_{L^2}^2\big)\nonumber\\
&\le C\big(\|\sqrt{\rho_0}u_0\|_{L^2}^2+\|\sqrt{\rho_0}w_0\|_{L^2}^2\big)
+C\eta^\frac12\frac{1}{\bar{\rho}^2}\sup_{0\le t\le T}\|\rho\|_{L^3}.
\end{align}
This together with \eqref{3.18}, \eqref{3.41}, H\"older's, Young's, and Gagliardo-Nirenberg inequalities gives that
 \begin{align}\label{3.56}
&\sup_{0\le t\le T}\|\rho\|_{L^3}^3+\int_0^T\int\rho^3pdxdt\nonumber\\
 &\le \|\rho_0\|_{L^3}^3
+C\bar{\rho}^\frac23\sup_{0\le t\le T}(\|\sqrt{\rho}u\|_{L^2}^\frac13
\|\sqrt{\rho}\Psi\|_{L^2}^\frac13\|\rho\|_{L^3}^3)
+C\bar{\rho}^2\int_0^T(\|\rho\|_{L^3}^2\|\nabla u\|_{L^2}^2)dt\nonumber\\
&\le \|\rho_0\|_{L^3}^3
+C(\eta^\frac{1}{12}+\eta^\frac12)\sup_{0\le t\le T}\|\rho\|_{L^3}^3+C\bar{\rho}^2\sup_{0\le t\le T}\|\rho\|_{L^3}^2
(\|\sqrt{\rho_0}u_0\|_{L^2}^2+\|\sqrt{\rho_0}w_0\|_{L^2}^2)\nonumber\\
&\le \|\rho_0\|_{L^3}^3
+C\Big(\eta^\frac{1}{12}+\eta^\frac12+\frac14\Big)\sup_{0\le t\le T}\|\rho\|_{L^3}^3
+C\bar{\rho}^6(\|\sqrt{\rho_0}u_0\|_{L^2}^2+\|\sqrt{\rho_0}w_0\|_{L^2}^2)^3,
 \end{align}
which implies \eqref{3.42} by choosing $\eta_0$ suitably small.
Moreover, we get from \eqref{3.55} and \eqref{3.56} that
\begin{align}\label{3.57}
&\bar{\rho}^2\sup_{0\le t\le T}(\|\sqrt{\rho}u\|_{L^2}^2+\|\sqrt{\rho}w\|_{L^2}^2)+
\bar{\rho}^2\int_0^T(\|\nabla u\|_{L^2}^2+\|w\|_{H^1}^2)dt\nonumber\\
&\le C\bar{\rho}^2(\|\sqrt{\rho_0}u_0\|_{L^2}^2+\|\sqrt{\rho_0}w_0\|_{L^2}^2)
+C\eta^\frac12\sup_{0\le t\le T}\|\rho\|_{L^3}\nonumber\\
&\le C(\|\rho_0\|_{L^3}+\bar{\rho}^2(\|\sqrt{\rho_0}u_0\|_{L^2}^2+\|\sqrt{\rho_0}w_0\|_{L^2}^2)),
\end{align}
which yields \eqref{3.43}. Finally, \eqref{3.45} follows from Lemma \ref{l35}, \eqref{3.42}, \eqref{3.44}, and \eqref{3.57}.
\end{proof}

\begin{lemma}\label{l37}
Let \eqref{1.5} be satisfied. Let $\eta_0$, $N_T$, and $N_0$ be as in Lemma \ref{l36}.
There exists a number $\varepsilon_0\in (0, \eta_0)$ such that if
\begin{align}\label{3.58}
\sup_{0\le t\le T}\|\rho\|_{L^\infty}\le 4\bar{\rho}, \quad N_T\le \sqrt{\varepsilon_0},\quad and\quad N_0\le \varepsilon_0,
\end{align}
then
\begin{align}
\sup_{0\le t\le T}\|\rho\|_{L^\infty}\le 2\bar{\rho} \quad and\quad N_T\le \frac{\sqrt{\varepsilon_0}}{2},
\end{align}
where $\varepsilon_0$ depends only on  $\mu$, $\lambda$, $\mu_r$, $c_0$, $c_a$, $c_d$, $j_I$, $c_v$, $R$, and $\kappa$.
\end{lemma}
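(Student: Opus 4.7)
The plan is a standard continuation/bootstrap closure: I will feed the hypotheses \eqref{3.58} into the estimates \eqref{3.42}--\eqref{3.45} of Lemma \ref{l36} and then choose $\varepsilon_0$ small enough (relative to the constants produced by those estimates) so that both targeted bounds improve by a factor of $2$. Since \eqref{3.41} is implied by \eqref{3.58} provided $\varepsilon_0 \le \eta_0$, Lemma \ref{l36} applies directly; I simply take $\varepsilon_0 \in (0,\eta_0)$ at the outset and fix it smaller as the argument requires.

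First I would estimate $N_T$. By its definition, $N_T$ factors as
\[
N_T = \bar{\rho}\bigl[\|\rho\|_{L^3}+\bar{\rho}^2(\|\sqrt{\rho}u\|_{L^2}^2+\|\sqrt{\rho}w\|_{L^2}^2)\bigr]\cdot\bigl(\|\nabla u\|_{L^2}^2+\|w\|_{H^1}^2+\bar{\rho}\|\sqrt{\rho}\Psi\|_{L^2}^2\bigr).
\]
The first bracket is controlled by $C\bigl(\|\rho_0\|_{L^3}+\bar{\rho}^2(\|\sqrt{\rho_0}u_0\|_{L^2}^2+\|\sqrt{\rho_0}w_0\|_{L^2}^2)\bigr)$ thanks to \eqref{3.42} and \eqref{3.43}, and the second factor is controlled by $C\bigl(\bar{\rho}\|\sqrt{\rho_0}\Psi_0\|_{L^2}^2+\|\nabla u_0\|_{L^2}^2+\|w_0\|_{H^1}^2\bigr)$ thanks to \eqref{3.44}. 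Taking the product and pulling out the common factor of $\bar{\rho}$ gives $N_T \le C_\star\, N_0$ for a constant $C_\star$ depending only on the parameters listed in the statement. Combined with $N_0\le \varepsilon_0$, this yields $N_T \le C_\star\,\varepsilon_0$, so requiring
\[
\varepsilon_0 \le \frac{1}{4C_\star^2}
\]
forces $C_\star \varepsilon_0 \le \tfrac{1}{2}\sqrt{\varepsilon_0}$ and hence $N_T \le \sqrt{\varepsilon_0}/2$, which is the second desired conclusion.

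Next I would address the density bound. Plugging the estimates \eqref{3.42}, \eqref{3.43} and \eqref{3.44} (applied to the $\sqrt{\rho}u$, $\sqrt{\rho}|u|^2$ and $\nabla F_1, \nabla V_1, \nabla\theta$ terms) into the pointwise bound \eqref{3.45} shows that the exponent there is dominated by $C(N_0^{1/6}+N_0^{1/2})$. Since $N_0 \le \varepsilon_0$, this gives
\[
\sup_{0\le t\le T}\|\rho\|_{L^\infty} \le \bar{\rho}\,\exp\!\bigl(C\varepsilon_0^{1/6}+C\varepsilon_0^{1/2}\bigr),
\]
and shrinking $\varepsilon_0$ once more so that the exponential is $\le 2$ produces the first desired conclusion. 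Fixing $\varepsilon_0$ as the minimum of $\eta_0$, $1/(4C_\star^2)$ and the constant just described finishes the proof.

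The only mildly delicate point is that the constants in \eqref{3.42}--\eqref{3.45} must be independent of $T$ and of $\bar{\rho}$ in the final combination; this is exactly what was arranged in Lemma \ref{l36}, where the $\bar{\rho}$-weights were carefully tracked precisely so that the product collapses to $C_\star N_0$ without any residual $\bar{\rho}$ factor. Once that is in hand, the bootstrap closure is purely algebraic and the argument is straightforward.
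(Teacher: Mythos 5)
Your proposal is correct and follows essentially the same route as the paper: apply Lemma \ref{l36} under \eqref{3.58} to bound each factor of $N_T$ by the corresponding initial quantity, obtaining $N_T\le C N_0\le C\varepsilon_0\le \sqrt{\varepsilon_0}/2$ once $\varepsilon_0\le 1/(4C^2)$, and then use \eqref{3.45} with $N_0\le\varepsilon_0$ to make the exponential factor at most $2$. The paper's proof is exactly this two-line verification, so nothing is missing.
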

\begin{proof}[Proof]
Let $\varepsilon_0\in(0, \eta_0)$, then we deduce from \eqref{3.58} and Lemma \ref{l36} that
\begin{align*}
N_T\le\bar{\rho}\big[\|\rho_0\|_{L^3}+\bar{\rho}^2\big(\|\sqrt{\rho_0}u_0\|_{L^2}^2
+\|\sqrt{\rho_0}w_0\|_{L^2}^2\big)\big]\big(\|\nabla u_0\|_{L^2}^2+\|w_0\|_{H^1}^2+
\bar{\rho}\|\sqrt{\rho_0}\Psi_0\|_{L^2}^2\big)\le C\varepsilon_0\le \frac{\sqrt{\varepsilon_0}}{2},
\end{align*}
and
\begin{align*}
\sup_{0\le t\le T}\|\rho\|_{L^\infty}\le \bar{\rho}e^{CN_0^\frac16+CN_0^\frac12}\le \bar{\rho}e^{C\varepsilon_0^\frac16+C\varepsilon_0^\frac12}
\le 2\bar{\rho},
\end{align*}
provided that $\varepsilon_0$ is sufficiently small.
\end{proof}

\begin{corollary}\label{c31}
Let \eqref{1.5} be satisfied. Let $\varepsilon_0$ be as in Lemma \ref{l37} and
assume $N_0\le \varepsilon_0$. Then there exists a positive constant $C$ depending only on $\mu$, $\lambda$, $\mu_r$, $c_0$,
 $c_a$, $c_d$, $j_I$, $c_v$, $R$, $\kappa$, $\|\rho_0\|_{L^1}$, $\|\rho_0\|_{L^3}$, $\|\sqrt{\rho_0}u_0\|_{L^2}$, $\|\sqrt{\rho_0}w_0\|_{L^2}$,
 $\|\sqrt{\rho_0}\Psi_0\|_{L^2}$, $\|\nabla u_0\|_{L^2}$, and $\|w_0\|_{H^1}$, such that the following estimates hold true
 \begin{align*}
 &\sup_{0\le t\le T}\big(\|(\sqrt{\rho}\Psi, \sqrt{\rho}u, \sqrt{\rho}w, \sqrt{\rho}uw,\nabla u, \nabla w, w)\|_{L^2}^2+\|\rho\|_{L^3}+\|\rho\|_{L^\infty}\big)\le C,\\
 & \int_0^T\big(\|(\nabla\theta, |w||\nabla u|, |u||\nabla w|, |u||\nabla u|, |w||\nabla w|, \sqrt{\rho}w_t, \sqrt{\rho}u_t,
  \nabla F_1, \nabla V_1)\|_{L^2}^2\nonumber\\
  &\quad+\|\nabla u\|_{L^6}^2+\|w\|_{H^2}^2+\int\rho^3pdx\big)dt\le C.
 \end{align*}
\end{corollary}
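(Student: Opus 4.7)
The plan is to obtain Corollary \ref{c31} by a standard continuity (bootstrap) argument anchored on Lemma \ref{l37}, after which every bound claimed reduces to a direct readout from Lemmas \ref{l31}--\ref{l36} together with the elliptic estimates of Lemma \ref{l23}.

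First I would set up the bootstrap. Fix any $T\in(0,T_*]$ on which the local strong solution of Lemma \ref{l21} exists, and define
$$T_1 \triangleq \sup\Big\{\tau\in[0,T] \, : \, \sup_{0\le t\le\tau}\|\rho(\cdot,t)\|_{L^\infty}\le 4\bar{\rho} \text{ and } \sup_{0\le t\le \tau} N_t \le \sqrt{\varepsilon_0}\Big\}.$$
Choosing $\varepsilon_0<1$ guarantees $N_0\le\varepsilon_0<\sqrt{\varepsilon_0}$, and the hypothesis $\rho_0\le\bar{\rho}$ gives $\|\rho_0\|_{L^\infty}<4\bar{\rho}$ strictly. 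Continuity at $t=0$ of the factors entering $N_t$ (supplied by the regularity recorded in \eqref{w1}, which makes $\|\rho(\cdot,t)\|_{L^3}$, $\|\sqrt{\rho}u\|_{L^2}^2$, $\|\sqrt{\rho}w\|_{L^2}^2$, $\|\nabla u\|_{L^2}^2$, $\|w\|_{H^1}^2$, and $\|\sqrt{\rho}\Psi\|_{L^2}^2$ continuous in $t$) then forces $T_1>0$.

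Next I would close the bootstrap. On $[0,T_1]$ both running hypotheses of Lemma \ref{l37} are met, so that lemma upgrades them to the strict bounds $\sup_{[0,T_1]}\|\rho\|_{L^\infty}\le 2\bar{\rho}$ and $\sup_{[0,T_1]} N_t\le\sqrt{\varepsilon_0}/2$. Continuity in $t$ again allows one to extend these strict inequalities to a strictly larger interval, contradicting maximality unless $T_1=T$. Hence the hypotheses of Lemma \ref{l36} are in force on $[0,T]$ for every $T>0$ in the lifespan of the local solution, and all the estimates \eqref{3.42}--\eqref{3.45} are available with constants independent of $T$.

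With the bootstrap closed, the pointwise bounds on $\|\sqrt{\rho}\Psi\|_{L^2}$, $\|\sqrt{\rho}u\|_{L^2}$, $\|\sqrt{\rho}w\|_{L^2}$, $\|\nabla u\|_{L^2}$, $\|\nabla w\|_{L^2}$, $\|w\|_{L^2}$, $\|\rho\|_{L^3}$, and $\|\rho\|_{L^\infty}$ are exactly \eqref{3.42}--\eqref{3.45}, while $\|\sqrt{\rho}uw\|_{L^2}^2\le\|\rho\|_{L^\infty}\||u||w|\|_{L^2}^2$ combined with \eqref{3.44} gives the $\sqrt{\rho}uw$ bound. The $L^2_t$ integrability of $\nabla\theta$, $|w||\nabla u|$, $|u||\nabla w|$, $|u||\nabla u|$, $|w||\nabla w|$, $\sqrt{\rho}u_t$, $\sqrt{\rho}w_t$, $\nabla F_1$, $\nabla V_1$, together with $\int_0^T\!\int\rho^3 p\,dx\,dt$, are all packaged in \eqref{3.42}--\eqref{3.44}. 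For the two remaining integrated norms I would appeal to Lemma \ref{l23}: from $\|\nabla u\|_{L^6}\le C(\|\rho\dot{u}\|_{L^2}+\|\nabla w\|_{L^2}+\|\rho\theta\|_{L^6})$, using $\|\rho\dot{u}\|_{L^2}\le\bar{\rho}^{1/2}\|\sqrt{\rho}u_t\|_{L^2}+\|\rho\|_{L^\infty}\||u||\nabla u|\|_{L^2}$ and $\|\rho\theta\|_{L^6}\le C\bar{\rho}\|\nabla\theta\|_{L^2}$, each piece is already controlled in $L^2_t$; likewise $\|\nabla^2 w\|_{L^2}\le C(\|\nabla F_2\|_{L^2}+\|\nabla V_2\|_{L^2}+\|\nabla u\|_{L^2}+\|w\|_{L^2})$ is $L^2_t$ by \eqref{3.44}, producing the $L^2_t(H^2)$ bound for $w$.

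I expect the principal delicate point to be the initial step of the bootstrap. One has to argue that the functional $t\mapsto N_t$ is right-continuous at $0$ with value $N_0$ (not merely that each constituent factor is), so that the strict smallness $N_0<\sqrt{\varepsilon_0}$ transfers to a neighbourhood. This is not purely formal because $N_t$ is a product of two factors involving different powers of $\bar{\rho}$; however, the regularity \eqref{w1} together with $\rho\in C([0,T_*];L^1\cap H^1\cap W^{1,q})$ suffices. Beyond this, everything else is simply a matter of collecting the conclusions already proven, so no new estimate is required.
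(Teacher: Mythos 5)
Your overall strategy --- an open--closed continuity argument anchored on Lemma \ref{l37} to keep \eqref{3.58} valid on all of $[0,T]$, followed by reading the bounds off Lemma \ref{l36} and using Lemma \ref{l23} for $\|\nabla u\|_{L^6}$ and $\|w\|_{H^2}$ --- is exactly the argument the paper intends (the paper leaves the corollary proofless and carries out the same bootstrap in Section \ref{sec5}), and almost all of your readout is correct.

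One step does not work as written: for the bound $\sup_{0\le t\le T}\|\sqrt{\rho}\,uw\|_{L^2}^2\le C$ you invoke $\|\sqrt{\rho}uw\|_{L^2}^2\le\|\rho\|_{L^\infty}\||u||w|\|_{L^2}^2$ together with \eqref{3.44}, but \eqref{3.44} only controls $\int_0^T\||u||w|\|_{L^2}^2\,dt$, i.e.\ an $L^2$-in-time quantity, which cannot produce a supremum in time. The fix is immediate from estimates you already have: since $\theta\ge 0$, one has $|u|^2\le 2\Psi$ and $j_I|w|^2\le 2\Psi$, hence
\begin{align*}
\|\sqrt{\rho}\,uw\|_{L^2}^2=\int\rho|u|^2|w|^2\,dx\le \frac{4}{j_I}\int\rho\Psi^2\,dx=\frac{4}{j_I}\|\sqrt{\rho}\Psi\|_{L^2}^2,
\end{align*}
and the right-hand side is bounded uniformly in $t$ by \eqref{3.44}. (Equivalently, one may integrate \eqref{3.15} in time, whose left-hand side already contains $\frac{d}{dt}\|\sqrt{\rho}|u||w|\|_{L^2}^2$.) With this correction the proof is complete.
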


\section{Time dependent higher order estimates}\label{sec4}
In this section, by virtue of Corollary \ref{c31}, we can derive some higher order estimates on $(\rho, u, w, \theta)$, which is sufficient to ensure the global existence of strong solutions. In what follows, we mention that positive constant $C$ may depend on $\mu$, $\lambda$, $\mu_r$, $c_0$, $c_a$, $c_d$, $j_I$, $c_v$, $R$, $\kappa$, $\|\rho_0\|_{L^1}$, $\Psi_0$, $g_1$, $g_2$, $g_3$, and $T$.
\begin{lemma}\label{l41}
Let $\varepsilon_0$ be as in Lemma \ref{l37} and assume that $N_0\le \varepsilon_0$, then it holds that
\begin{align}\label{4.1}
\sup_{0\le t\le T}\big(\|\nabla\theta\|_{L^2}^2+\|\sqrt{\rho}\dot{u}\|_{L^2}^2
+\|\sqrt{\rho}\dot{w}\|_{L^2}^2\big)
+\int_0^T\|(\sqrt{\rho}\dot{\theta}, \nabla\dot{u}, \nabla\dot{w}, \dot{w})\|_{L^2}^2dt\le C(T).
\end{align}
\end{lemma}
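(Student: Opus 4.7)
The plan is to derive (\ref{4.1}) by combining a multiplier identity for the temperature equation with Hoff's material-derivative technique for the momentum and angular-momentum equations, and then close via Gr\"onwall using the integrability bounds already collected in Corollary \ref{c31}.

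First, I multiply $(\ref{a1})_4$ by $\dot\theta=\theta_t+u\cdot\nabla\theta$ and integrate. The diffusive contribution gives $\kappa\int\Delta\theta\,\dot\theta\,dx=-\frac{\kappa}{2}\frac{d}{dt}\|\nabla\theta\|_{L^2}^2-\kappa\int(\nabla u\cdot\nabla\theta)\cdot\nabla\theta\,dx+\frac{\kappa}{2}\int(\divv u)|\nabla\theta|^2\,dx$, so the main quantities $c_v\|\sqrt{\rho}\dot\theta\|_{L^2}^2+\frac{\kappa}{2}\frac{d}{dt}\|\nabla\theta\|_{L^2}^2$ appear on the left. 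The commutator $\int\nabla u\cdot|\nabla\theta|^2\,dx$ is bounded by $\|\nabla u\|_{L^6}\|\nabla\theta\|_{L^2}^{1/2}\|\nabla\theta\|_{L^6}^{3/2}$ and then by $\|\nabla u\|_{L^6}\|\nabla\theta\|_{L^2}^{1/2}\|\nabla^2\theta\|_{L^2}^{3/2}$ using Gagliardo--Nirenberg, after which the elliptic estimate $\|\nabla^2\theta\|_{L^2}\le C(\|\sqrt{\rho}\dot\theta\|_{L^2}+\|\rho\theta\|_{L^2}+\|\nabla u\|_{L^4}^2+\cdots)$ for $(\ref{a1})_4$ absorbs the top-order piece into the left-hand side. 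The pressure work $R\rho\theta\divv u\,\dot\theta$, the viscous heating $\mathcal{Q}(\nabla u)\,\dot\theta$, the coupling term $4\mu_r|\tfrac12\curl u-w|^2\dot\theta$, and the micro-rotation quadratic forms are treated similarly by writing $\dot\theta=(\sqrt{\rho}\dot\theta)/\sqrt{\rho}$ and invoking H\"older, Gagliardo--Nirenberg, and Corollary \ref{c31}.

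Second, I apply $\partial_t+\divv(u\,\cdot)$ to $(\ref{a1})_2$ and multiply the resulting identity by $\dot u$. After integration by parts the viscous operators supply the coercive dissipation $(\mu+\mu_r)\|\nabla\dot u\|_{L^2}^2+(\lambda+\mu-\mu_r)\|\divv\dot u\|_{L^2}^2$ together with $\frac12\frac{d}{dt}\|\sqrt{\rho}\dot u\|_{L^2}^2$. The right-hand side is a sum of Hoff-type commutators (schematically $\int\nabla u\otimes\nabla u\otimes\nabla\dot u\,dx$, bounded by $C\|\nabla u\|_{L^4}^4+\tfrac{1}{8}(\mu+\mu_r)\|\nabla\dot u\|_{L^2}^2$), a pressure-time-derivative contribution in which $p_t$ is rewritten via $(\ref{a1})_1$ and $(\ref{a1})_4$ as a combination of $p\,\divv u$, $\kappa\Delta\theta$, $\mathcal{Q}(\nabla u)$ and quadratic forms in $\nabla w$, and the coupling $2\mu_r\curl w$ term. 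An analogous calculation on $(\ref{a1})_3$ tested against $\dot w$ produces the bounds on $\|\sqrt{\rho}\dot w\|_{L^2}^2$, $\int\|\nabla\dot w\|_{L^2}^2\,dt$, and, crucially, $\int\|\dot w\|_{L^2}^2\,dt$: the algebraic term $4\mu_r w$ in $(\ref{a1})_3$ survives the material derivative and contributes a coercive $\|\dot w\|_{L^2}^2$ on the left.

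The main obstacle is closing the coupled system. The temperature identity produces sources of type $\|\nabla u\|_{L^6}^2$ and $\|\nabla w\|_{L^6}^2$ that I would trade, via Lemma \ref{l23}, for $\|\sqrt{\rho}\dot u\|_{L^2}^2+\|\sqrt{\rho}\dot w\|_{L^2}^2$ plus lower-order terms; conversely, the momentum identities produce $\|\nabla\theta\|_{L^6}^2$-type sources that must be controlled by $\|\sqrt{\rho}\dot\theta\|_{L^2}^2$ via elliptic regularity for $\theta$. After summing the three identities with appropriate weights so that all highest-order terms move to the left, one obtains a Gr\"onwall inequality of the form $\frac{d}{dt}A(t)+D(t)\le \Phi(t)A(t)+\Psi(t)$ where $A=\|\nabla\theta\|_{L^2}^2+\|\sqrt{\rho}\dot u\|_{L^2}^2+\|\sqrt{\rho}\dot w\|_{L^2}^2$, $D=\|\sqrt{\rho}\dot\theta\|_{L^2}^2+\|\nabla\dot u\|_{L^2}^2+\|\nabla\dot w\|_{L^2}^2+\|\dot w\|_{L^2}^2$, and $\Phi,\Psi\in L^1(0,T)$ thanks to Corollary \ref{c31}. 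Gr\"onwall's inequality then yields (\ref{4.1}); the compatibility conditions (\ref{1.7}) are used only to guarantee that $\sqrt{\rho_0}\dot u|_{t=0}$, $\sqrt{\rho_0}\dot w|_{t=0}$, and $\sqrt{\rho_0}\dot\theta|_{t=0}$ are bounded in $L^2$, serving as the initial datum for the argument.
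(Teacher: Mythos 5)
Your overall strategy coincides with the paper's: test the temperature equation by $\dot\theta$, apply $\partial_t+\divv(u\,\cdot)$ to the momentum and angular-momentum equations and test by $\dot u$, $\dot w$, then sum with a small weight and close by Gr\"onwall using Corollary \ref{c31}. The treatment of the diffusive commutator via the elliptic estimate for $\nabla^2\theta$, the Hoff commutators, and the origin of the $\|\dot w\|_{L^2}^2$ dissipation (from the $4\mu_r w$ term, which together with the cross term $4\mu_r\int\dot w\cdot\curl\dot u\,dx$ completes the square $\|\tfrac12\curl\dot u-\dot w\|_{L^2}^2$) all match the paper.

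There is, however, one genuine gap: your proposed treatment of the quadratic heating sources in the temperature identity. You claim that $\int\mathcal{Q}(\nabla u)\,\dot\theta\,dx$, $\int 4\mu_r|\tfrac12\curl u-w|^2\dot\theta\,dx$, and the micro-rotational quadratic forms can be handled ``by writing $\dot\theta=(\sqrt{\rho}\dot\theta)/\sqrt{\rho}$.'' These integrands carry no factor of $\rho$, so this manipulation introduces $\rho^{-1/2}$, which is unbounded (indeed infinite on the vacuum set and as $|x|\to\infty$, since $\rho\to 0$ there). Nor can you pair $\dot\theta$ with anything else: in this lemma the only available temperature dissipation is $\|\sqrt{\rho}\dot\theta\|_{L^2}^2$ (the bound on $\|\nabla\dot\theta\|_{L^2}$ only comes in the next lemma), and with far-field vacuum there is no control of $\|\dot\theta\|_{L^2}$ or $\|\dot\theta\|_{L^6}$. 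The paper resolves exactly this point by writing $\dot\theta=\theta_t+u\cdot\nabla\theta$ in these terms and integrating by parts in time and space: this produces total time derivatives such as $\lambda\frac{d}{dt}\int(\divv u)^2\theta\,dx$, which are absorbed into a modified energy functional $\Phi$ (shown to be comparable to $\kappa\|\nabla\theta\|_{L^2}^2$ up to a constant via Corollary \ref{c31}), plus remainders controlled by $\delta\|\nabla\dot u\|_{L^2}^2+\delta\|\nabla\dot w\|_{L^2}^2$ and quartic terms in $\nabla u$, $\nabla w$. Those $\delta$-terms are then absorbed by the dissipation supplied by the momentum part, which is precisely why the two identities must be combined with the calibrated weight ($\eta_1^{1/2}$ versus $\eta_1$ in the paper). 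Your proposal already contains the coupling mechanism, so the fix is local, but as written the key step fails in the vacuum setting that is the whole point of the theorem.
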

\begin{proof}

1. Applying $\dot{u}^j(\partial_t+\divv(u\cdot))$ to the $j$-th component of $\eqref{a1}_{2}$ and $\dot{w}^j(\partial_t+\divv(u\cdot))$ to the $j$-th component of $\eqref{a1}_{3}$, respectively, and then summing up, one gets by some calculations that
\begin{align}\label{f30}
&\frac{1}{2}\frac{d}{dt}\int\big(\rho|\dot{u}|^2+j_I\rho|\dot{w}|^2\big)dx\nonumber\\
&=-\int\dot{u}^j\big(\partial_jp_t
+\divv(u\partial_jp)\big)dx+(\mu+\mu_r)\int\dot{u}^j\big(\partial_t\Delta u^j
+\divv(u\Delta u^j)\big)dx
\nonumber\\
&\quad+(\mu+\lambda-\mu_r)\int\dot{u}^j\big(\partial_t\partial_j\divv u
+\divv(u\partial_j\divv u)\big)dx+(c_a+c_d)\int\dot{w}^j\big(\Delta w_t^j
+\divv(u\Delta w^j)\big)dx\nonumber\\
&\quad+(c_0+c_d-c_a)\int\dot{w}^j\big(\partial_j\divv w_t
+\divv(u\partial_j\divv w)\big)dx+2\mu_r\int\dot{u}
\cdot\big(\curl w_t+\partial_j(u^j\curl w)\big)dx\nonumber\\
&\quad
+2\mu_r\int\dot{w}\cdot\big(\curl u_t+\partial_j(u^j\curl u)\big)dx
-4\mu_r\int\dot{w}^j\big(w_t^j+\divv(w^ju)\big)dx\triangleq\sum_{i=1}^8K_i.
\end{align}
Integration by parts together with $\eqref{a1}_1$, H\"older's inequality, Corollary \ref{c31}, and Young's inequality yields that
\begin{align*}
K_1&=\int(\partial_j\dot{u}^jp_t+\partial_j pu\cdot\nabla\dot{u}^j)dx\nonumber\\
&=\int\partial_j\dot{u}^jp_tdx+\int \partial_j p(u\cdot\nabla\dot{u}^j)dx\nonumber\\
&=R\int\partial_j\dot{u}^j\big[(\rho\theta)_t+{\rm div}(\rho u\theta)\big]dx
-R\int\rho\theta\partial_ju\cdot\nabla\dot{u}^jdx\nonumber\\
&=R\int\rho\dot{\theta}\partial_j\dot{u}^jdx-R\int\rho\theta\partial_ju\cdot\nabla\dot{u}^jdx\nonumber\\
&\le \frac{\mu}{6}\|\nabla\dot{u}\|_{L^2}^2+C\|\rho\theta\nabla u\|_{L^2}^2+C\|\sqrt{\rho}\dot{\theta}\|_{L^2}^2\nonumber\\
&\le \frac{\mu}{6}\|\nabla\dot{u}\|_{L^2}^2+C\|\rho\theta\|_{L^2}^\frac12\|\theta\|_{L^6}^\frac32\|\nabla u\|_{L^4}^2
+C\|\sqrt{\rho}\dot{\theta}\|_{L^2}^2\nonumber\\
&\le \frac{\mu}{6}\|\nabla\dot{u}\|_{L^2}^2
+C\big(1+\|\sqrt{\rho}\dot{\theta}\|_{L^2}^2+\|\nabla\theta\|_{L^2}^4+\|\nabla u\|_{L^4}^4\big).
\end{align*}
It follows from integration by parts that
\begin{align*}
K_2+K_3&=(\mu+\mu_r)\int\dot{u}^j(\Delta(\dot{u}^j-u\cdot\nabla u^j)
+\divv (u\Delta u^j))dx\nonumber\\
&\quad+(\mu+\lambda-\mu_r)\int\dot{u}^j(\partial_j{\rm div}(\dot{u}-u\cdot\nabla u)+\divv(u\partial_j\divv u))dx\nonumber\\
&=-(\mu+\mu_r)\int|\nabla\dot{u}|^2dx-(\mu+\lambda-\mu_r)
\int(\divv \dot{u})^2dx\nonumber\\
&\quad-(\mu+\mu_r)\int\dot{u}\Delta(u\cdot\nabla u^j)dx+(\mu+\mu_r)\int\dot{u}^j\divv (u\cdot\Delta u^j)dx\nonumber\\
&\quad-(\mu+\lambda-\mu_r)\int\dot{u}^j\partial_j\divv(u\cdot\nabla u)dx
+(\mu+\lambda-\mu_r)\int\dot{u}^j\divv (u\partial_j\divv u)dx\nonumber\\
&=-\mu \int |\nabla \dot{u}|^{2}dx-(\mu+\lambda)
\int(\divv \dot{u})^{2}dx-\mu_{r} \int |\curl\dot{u}|^{2}dx \nonumber\\
&\quad-(\mu+\mu_{r}) \int(\partial_{i} \dot{u}^{j} \partial_{i} u^{j}{\rm div}u-\partial_{i} \dot{u}^{j} \partial_{i} u^{k} \partial_{k} u^{j}-\partial_{k} \dot{u}^{j} \partial_{i} u^{j} \partial_{i} u^{k}) dx \nonumber\\
&\quad-(\mu+\lambda-\mu_{r}) \int\big(\partial_{k} \dot{u}^{j} \partial_{i} u^{i} \partial_{j} u^{k}-\partial_{k} \dot{u}^{j} \partial_{j} u^{i} \partial_{i} u^{k}-\partial_{k} \dot{u}^{j} \partial_{i} u^{k} \partial_{j} u^{i}\big)dx \nonumber\\
&\le-\mu \int |\nabla \dot{u}|^{2}dx-(\mu+\lambda) \int(\divv \dot{u})^{2} dx-\mu_{r} \int |\curl\dot{u}|^{2}dx+\frac{\mu}{6}\|\nabla \dot{u}\|_{L^{2}}^{2}+C\|\nabla u\|_{L^{4}}^{4}.
\end{align*}
We obtain from integration by parts and H\"older's inequality that
\begin{align*}
K_4+K_5&=(c_{a}+c_{d}) \int \dot{w}^{j}(\Delta(\dot{w}^{j}-u \cdot \nabla w^{j})+\divv (u \Delta w^{j}))dx \\
&\quad+(c_{0}+c_{d}-c_{a}) \int \dot{w}^{j}(\partial_{j} \divv (\dot{w}-u \cdot \nabla w)+\divv(u \partial_{j} \divv w))dx \\
&=-(c_{a}+c_{d}) \int |\nabla \dot{w}|^{2}dx-(c_{0}+c_{d}-c_{a}) \int (\divv\dot{w})^{2}dx \\
&\quad-(c_{a}+c_{d}) \int\dot{w}^{j} \Delta(u \cdot \nabla w^{j})dx+(c_{a}+c_{d}) \int\dot{w}^{j}\divv(u \Delta w^{j})dx \\
&\quad-(c_{0}+c_{d}-c_{a})\int\dot{w}^{j}\partial_{j}\divv(u\cdot \nabla w)dx+(c_{0}+c_{d}-c_{a})
 \int\dot{w}^{j}\divv (u \partial_{j}\divv w)dx\\
&=-c_{d} \int|\nabla \dot{w}|^{2}dx-(c_{0}+c_{d})
\int (\divv \dot{w})^{2}dx-c_{a} \int|\curl\dot{w}|^{2} dx \\
&\quad-(c_{a}+c_{d}) \int\big(\partial_{i} \dot{w}^{j} \partial_{k} u^{k} \partial_{i} w^{j}-\partial_{i} \dot{w}^{j} \partial_{i} u^{k} \partial_{k} w^{j}-\partial_{i} \dot{w}^{j} \partial_{k} u^{i} \partial_{k} w^{j}\big)dx \\
&\quad-(c_{0}+c_{d}-c_{a}) \int \big(\partial_{k} \dot{w}^{j} \partial_{i} u^{i} \partial_{j} w^{k}-\partial_{k} \dot{w}^{j} \partial_{j} u^{i} \partial_{i} w^{k}-\partial_{k} \dot{w}^{j} \partial_{i} u^{k} \partial_{j} w^{i}\big)dx \\
&\le -c_{d} \int|\nabla \dot{w}|^{2} dx-(c_{0}+c_{d}) \int (\divv \dot{w})^2dx
-c_{a} \int |\curl\dot{w}|^{2} dx \\
&\quad+\frac{c_d}{4} \|\nabla \dot{w}\|_{L^{2}}^{2}+C\big(\|\nabla u\|_{L^{4}}^{4}+\|\nabla w\|_{L^{4}}^{4}\big).
\end{align*}
Integrating by parts and applying H\"older's inequality, we arrive at
\begin{align*}
K_6&=2 \mu_{r} \int\dot{u} \cdot\big[\curl w_{t}+\partial_{j}(u^{j}\curl w)\big] dx+2 \mu_{r}
\int\dot{w} \cdot\big[\curl u_{t}+\partial_{j}(u^{j}\curl u)\big]dx \\
& =2 \mu_{r}\int(\dot{u} \cdot \curl \dot{w}+\dot{w} \cdot \curl\dot{u}) dx-2 \mu_{r} \int \dot{u} \cdot\curl (u \cdot \nabla w) dx+2 \mu_{r} \int \dot{u} \cdot \partial_{j}(u^{j}\curl  w)dx \\
&\quad-2 \mu_{r} \int \dot{w} \cdot \curl (u \cdot \nabla u)dx+2 \mu_{r} \int \dot{w} \cdot \partial_{j}(u^{j} \curl u) dx \\
&= 2 \mu_{r} \int(\dot{u} \cdot \curl \dot{w}+\dot{w} \cdot \curl \dot{u})dx-2 \mu_{r} \int \dot{u} \cdot \curl (u \cdot \nabla w)dx-2 \mu_{r} \int  \partial_{j} \dot{u} \cdot(u^{j}\curl  w)dx \\
&\quad-2 \mu_{r} \int \dot{w} \cdot \curl (u \cdot \nabla u) dx-2 \mu_{r} \int \partial_{j} \dot{w} \cdot(u^{j}\curl  u) dx \\
& \le  2 \mu_{r}\int\big(\dot{u} \cdot \curl \dot{w}+\dot{w} \cdot\curl \dot{u}\big)dx+C \|\nabla \dot{u}\|_{L^{2}}\|\nabla w\|_{L^{3}}\|u\|_{L^{6}}+C\|\nabla \dot{w}\|_{L^{2}}\|\nabla u\|_{L^{3}}\|u\|_{L^{6}} \\
 &\le 2 \mu_{r} \int\big(\dot{u} \cdot \curl \dot{w}+\dot{w} \cdot\curl \dot{u}\big)dx+\delta \big(\|\nabla \dot{u}\|_{L^{2}}^{2}+\|\nabla \dot{w}\|_{L^{2}}^{2}\big)+C \big(\|\nabla u\|_{L^{3}}^{3}+\|\nabla w\|_{L^{3}}^{3}+\|\nabla u\|_{L^{2}}^{6}\big) \\
&\le 4\mu_{r} \int\dot{w} \cdot \curl \dot{u}dx+\frac{\mu}{6}\|\nabla \dot{u}\|_{L^{2}}^{2}+\frac{c_d}{4}\|\nabla \dot{w}\|_{L^{2}}^{2}+C\|\nabla u\|_{L^{2}}^{6}+C\big(\|\nabla u\|_{L^{3}}^{3}+\|\nabla w\|_{L^{3}}^{3}\big).
\end{align*}
Based on H\"older's inequality and integration by parts, we have
\begin{align*}
K_7&=-4 \mu_{r} \int \dot{w}^{j}\big[\dot{w}^{j}-u \cdot \nabla w^{j}+\divv(u w^{j})\big] dx \\
&=-4 \mu_{r} \int|\dot{w}|^{2}dx+4 \mu_{r} \int \dot{w}\big[u \cdot \nabla w^{j}-\divv(u w^{j})\big] dx \\
&=-4 \mu_{r} \int |\dot{w}|^{2} dx-4 \mu_{r} \int  \dot{w}^{j} w^{j} \divv u dx \\
&\le-4 \mu_{r} \int |\dot{w}|^{2} dx+\delta\|\dot{w}\|_{L^{2}}^{2}+C\big(\|\nabla u\|_{L^4}^4+\|w\|_{L^4}^4\big).
\end{align*}
Substituting the above estimates for $K_i\ (i=1, 2, \cdots, 7)$ into \eqref{f30} gives that
\begin{align*}
&\frac{1}{2}\frac{d}{dt}\big(\|\sqrt{\rho}\dot{u}\|_{L^2}^2
+j_I\|\sqrt{\rho}\dot{w}\|_{L^2}^2\big)+\frac{\mu}{2}\|\nabla\dot{u}\|_{L^2}^2
+(\mu+\lambda)\|\divv\dot{u}\|_{L^2}^2+\frac{c_d}{2}\|\nabla\dot{w}\|_{L^2}^2\nonumber\\
&\quad+(c_0+c_d)\|\divv\dot{w}\|_{L^2}^2
+c_a\|\curl\dot{w}\|_{L^2}^2+4\mu_r\Big\|\frac12\curl
\dot{u}-\dot{w}\Big\|_{L^2}^2\nonumber\\
&\le \delta\|\dot{w}\|_{L^2}^2+C\|\nabla u\|_{L^{3}}^{3}+\|\nabla w\|_{L^{3}}^{3}
+C\big(\|\nabla u\|_{L^{4}}^{4}+\|\nabla w\|_{L^{4}}^{4}+\|w\|_{L^4}^4\big)\nonumber\\
&\quad+C\big(1+\|\sqrt{\rho}\dot{\theta}\|_{L^2}^2+\|\nabla\theta\|_{L^2}^4\big)
+C\|\nabla u\|_{L^2}^{6}\nonumber\\
&\le \delta\big(\|\nabla\dot{u}\|_{L^2}^2+\|\curl\dot{u}-2\dot{w}\|_{L^2}^2\big)
+C\|\nabla u\|_{L^{3}}^{3}+\|\nabla w\|_{L^{3}}^{3}+C\|\nabla u\|_{L^2}^{6}
\nonumber\\
&\quad+C\big(\|\nabla u\|_{L^{4}}^{4}+\|\nabla w\|_{L^{4}}^{4}+\|w\|_{L^4}^4\big)
+C\|\sqrt{\rho}\dot{\theta}\|_{L^2}^2+C\|\nabla\theta\|_{L^2}^4+C,
\end{align*}
which leads to
\begin{align}\label{4.5}
&\frac{1}{2}\frac{d}{dt}\big(\|\sqrt{\rho}\dot{u}\|_{L^2}^2
+j_I\|\sqrt{\rho}\dot{w}\|_{L^2}^2\big)+\frac{\mu}{4}\|\nabla\dot{u}\|_{L^2}^2
+(\mu+\lambda)\|\divv\dot{u}\|_{L^2}^2+\frac{c_d}{2}\|\nabla\dot{w}\|_{L^2}^2\nonumber\\
&\quad+(c_0+c_d)\|\divv\dot{w}\|_{L^2}^2
+c_a\|\curl\dot{w}\|_{L^2}^2+2\mu_r\Big\|\frac12\curl
\dot{u}-\dot{w}\Big\|_{L^2}^2\nonumber\\
&\le C\big(\|\nabla u\|_{L^4}^4+\|\nabla w\|_{L^4}^4+\|w\|_{L^4}^4\big)+C\|\nabla u\|_{L^2}^2+C\|\nabla w\|_{L^2}^2+C\|\nabla u\|_{L^2}^{6}
\nonumber\\
&\quad+C\|\sqrt{\rho}\dot{\theta}\|_{L^2}^2+C\|\nabla\theta\|_{L^2}^4+C,
\end{align}
due to
\begin{align*}
\|\dot{w}\|_{L^2}^2=\frac14\|\curl \dot{u}-2\dot{w}-\curl \dot{u}\|_{L^2}^2
\le C\|\curl \dot{u}-2\dot{w}\|_{L^2}^2+C\|\curl \dot{u}\|_{L^2}^2.
\end{align*}

2. Multiplying $\eqref{a1}_3$ by $\dot{\theta}$ and integrating the resulting equation by parts yield that
\begin{align}\label{f21}
&\frac{\kappa}{2}\frac{d}{dt}\int|\nabla\theta|^2dx+c_v\int\rho|\dot{\theta}|^2dx\nonumber\\
&=\kappa\int u\cdot\nabla\theta\Delta\theta dx-\int\rho\theta\divv u\dot{\theta}dx+\lambda\int({\rm div}u)^2\dot{\theta}dx
+2\mu\int\mathcal{D}:\mathcal{D}\dot{\theta}dx\nonumber\\
&\quad+4\mu\int\Big|\frac12\curl u-w\Big|^2\dot{\theta}dx
+c_0\int(\divv w)^2\dot{\theta}dx
+(c_a+c_d)\int\nabla w:\nabla w^T\dot{\theta}dx\nonumber\\
&\quad+(c_d-c_a)\nabla w:\nabla w\dot{\theta}dx\triangleq\sum_{i=1}^8J_i.
\end{align}
Let us deal with the right-hand side terms of \eqref{f21}.  By the standard $L^2$-estimate of elliptic system, one derives that
\begin{align*}
\|\nabla^2\theta\|_{L^2}&\le C(\|\rho\dot{\theta}\|_{L^2}
+\|\rho\theta\divv u\|_{L^2}+\|\nabla u\|_{L^4}^2
+\|w\|_{L^4}^2+\|\nabla w\|_{L^4}^2)\nonumber\\[3pt]
&\le C\|\sqrt{\rho}\dot{\theta}\|_{L^2}+C\bar{\rho}\|\nabla u\|_{L^2}\|\theta\|_{L^\infty}
+C\|\nabla u\|_{L^4}^2
+C\|w\|_{L^4}^2+C\|\nabla w\|_{L^4}^2\nonumber\\
&\le \frac12\|\nabla^2\theta\|_{L^2}+C(\|\sqrt{\rho}\dot{\theta}\|_{L^2}
+\|\nabla u\|_{L^4}^2
+\|w\|_{L^4}^2+\|\nabla w\|_{L^4}^2),
\end{align*}
which implies that
\begin{align}\label{f22}
\|\nabla^2\theta\|_{L^2}\le C(\|\sqrt{\rho}\dot{\theta}\|_{L^2}
+\|\nabla u\|_{L^4}^2
+\|w\|_{L^4}^2+\|\nabla w\|_{L^4}^2).
\end{align}
Thus, we obtain from Sobolev's inequality, Gagliardo-Nirenberg inequality, and \eqref{f22} that
\begin{align*}
J_1&=\kappa\int\Big(\frac{1}{2}|\nabla\theta|^2{\rm div}u-\partial_i\theta\partial_iu^k\partial_k\theta\Big)dx\nonumber\\
&\le C\|\nabla u\|_{L^2}\|\nabla\theta\|_{L^2}^\frac{1}{2}\|\nabla^2\theta\|_{L^2}^\frac{3}{2}\nonumber\\
&\le C\|\nabla u\|_{L^2}\|\nabla\theta\|_{L^2}^\frac{1}{2}\Big(\|\sqrt{\rho}\dot{\theta}\|_{L^2}
+\|\nabla u\|_{L^4}^2
+\|w\|_{L^4}^2+\|\nabla w\|_{L^4}^2\Big)^\frac32\nonumber\\
&\le \delta(\|\sqrt{\rho}\dot{\theta}\|_{L^2}^2+\|\nabla\theta\|_{L^2}^2+\|\nabla u\|_{L^4}^4
+\|\nabla w\|_{L^4}^4+\|w\|_{L^4}^4)+C(\delta)\|\nabla u\|_{L^2}^4\|\nabla\theta\|_{L^2}^2\nonumber\\
&\le \delta\|\sqrt{\rho}\dot{\theta}\|_{L^2}^2+C(\|\nabla\theta\|_{L^2}^2
+\|w\|_{L^4}^4+\|\nabla u\|_{L^4}^4+\|\nabla w\|_{L^4}^4).
\end{align*}
It is easy to see that
\begin{align*}
J_2\le C\|\sqrt{\rho}\dot{\theta}\|_{L^2}\|\sqrt{\rho}\theta\|_{L^2}^\frac14\|\theta\|_{L^6}^\frac34\|\nabla u\|_{L^4}
\le \delta\|\sqrt{\rho}\dot{\theta}\|_{L^2}^2
+C(\delta)(1+\|\nabla\theta\|_{L^2}^4+\|\nabla u\|_{L^4}^4).
\end{align*}
Integration by parts together with H\"older's inequality leads to
\begin{align}\label{f25}
J_3&=\lambda\int({\rm div}u)^2\theta_tdx
+\lambda\int(\divv u)^2(u\cdot\nabla\theta)dx\nonumber\\
&=\lambda\frac{d}{dt}\int(\divv u)^2\theta dx-2\lambda\int\theta\divv u
\divv\dot{u}dx
+2\lambda\int\theta\divv u\divv(u\cdot\nabla u)dx
+\lambda\int(\divv u)^2(u\cdot\nabla\theta)dx\nonumber\\
&=\lambda\frac{d}{dt}\int(\divv u)^2\theta dx-2\lambda\int\theta\divv u
\divv \dot{u}dx
+2\lambda\int\theta\divv u\partial_iu^j\partial_ju^idx+\lambda\int u\cdot\nabla(\theta(\divv u)^2)dx\nonumber\\
&=\lambda\frac{d}{dt}\int(\divv u)^2\theta dx-2\lambda\int\theta\divv u
\divv \dot{u}dx
+2\lambda\int\theta{\rm div}u\partial_iu^j\partial_ju^idx-\lambda\int\theta(\divv u)^3dx\nonumber\\
&\le \lambda\frac{d}{dt}\int(\divv u)^2\theta dx
+\delta\big(\|\nabla u\|_{L^4}^4+\|\nabla\dot{u}\|_{L^2}^2\big)+C(\delta)\|\theta\|_{L^6}\|\nabla u\|_{L^2}^\frac13\|\nabla u\|_{L^4}^\frac23
\|\nabla u\|_{L^2}^2\nonumber\\
&\le \lambda\frac{d}{dt}\int(\divv u)^2\theta dx
+\delta\|\nabla\dot{u}\|_{L^2}^2+C\big(1+\|\nabla u\|_{L^4}^4+\|\nabla\theta\|_{L^2}^4\big).
\end{align}
Similarly to \eqref{f25}, we can also get that
\begin{align}
J_4&\le \frac{\mu}{2}\frac{d}{dt}\int\mathcal{D}:\mathcal{D}\theta dx
+\delta\|\nabla\dot{u}\|_{L^2}^2+C\big(1+\|\nabla u\|_{L^4}^4+\|\nabla\theta\|_{L^2}^4\big).
\end{align}
For the term $J_5$, we deduce from Gagliardo-Nirenberg, H\"older's, and Young's inequalities that
\begin{align}
J_5&=4\mu_r\int\Big|\frac12\curl u-w\Big|^2\theta_tdx
+4\mu_r\int\Big|\frac12\curl u-w\Big|^2u\cdot\nabla\theta dx\nonumber\\
&=4\mu_r\frac{d}{dt}\int\Big|\frac12\curl u-w\Big|^2\theta dx
-8\mu_r\int\Big(\frac12\curl u-w\Big)\cdot\Big(\frac12
\curl u_t-w_t\Big)\theta dx\nonumber\\
&\quad+4\mu_r\int\Big|\frac12\curl u-w\Big|^2u\cdot\nabla\theta dx\nonumber\\
&=4\mu_r\frac{d}{dt}\int\Big|\frac12\curl u-w\Big|^2\theta dx
+4\mu_r\int\Big|\frac12\curl u-w\Big|^2u\cdot\nabla\theta dx\nonumber\\
&\quad-8\mu_r\int\Big(\frac12\curl u-w\Big)\cdot\Big(\frac12\curl (\dot{u}-u\cdot\nabla u)
-(\dot{w}-u\cdot\nabla w)\Big)\theta dx\nonumber\\
&\le 4\mu_r\frac{d}{dt}\int\Big|\frac12\curl u-w\Big|^2\theta dx
-4\mu_r\int \divv u\theta\Big|\frac12\curl u-w\Big|^2dx\nonumber\\
&\quad-8\mu_r\int\Big(\frac12\curl u-w\Big)\cdot\Big(\frac12\curl \dot{u}-\dot{w}\Big)\theta dx+C\int\theta|\nabla u|^2(|\curl u|+|w|)dx\nonumber\\
&\le 4\mu_r\frac{d}{dt}\int\Big|\frac12\curl u-w\Big|^2\theta dx+\delta\big(\|\nabla\dot{u}\|_{L^2}^2+\|\dot{w}\|_{L^2}^2\big)
+C(\delta)\|\theta w\|_{L^2}^2\nonumber\\
&\quad+C(\delta)\|\theta\nabla u\|_{L^2}^2
+C(\delta)\big(\|\nabla u\|_{L^4}^4+\|w\|_{L^4}^4\big)\nonumber\\
&\le 4\mu_r\frac{d}{dt}\int\Big|\frac12\curl u-w\Big|^2\theta dx+\delta\big(\|\nabla\dot{u}\|_{L^2}^2+\|\dot{w}\|_{L^2}^2\big)
+C\|\theta\|_{L^6}^2\|w\|_{L^2}^\frac23\|w\|_{L^4}^\frac43\nonumber\\
&\quad+C\|\theta\|_{L^6}^2\|\nabla u\|_{L^2}^\frac23\|\nabla u\|_{L^4}^\frac43
+C(\delta)\big(\|\nabla u\|_{L^4}^4+\|w\|_{L^4}^4\big)\nonumber\\
&\le 4\mu_r\frac{d}{dt}\int\Big|\frac12\curl u-w\Big|^2\theta dx+C\big(1+\|\nabla\theta\|_{L^2}^4+\|\nabla u\|_{L^4}^4+\|w\|_{L^4}^4\big).
\end{align}
Similarly to $J_3$ and $J_4$, it follows from the direct calculations and H\"older's inequality that
\begin{align*}
J_6&\le c_0\frac{d}{dt}\int(\divv w)^2\theta dx
+\delta\big(\|\nabla w\|_{L^4}^4+\|\nabla\dot{w}\|_{L^2}^2\big)+C(\delta)\big(1+\|\nabla\theta\|_{L^2}^4
+\|\nabla u\|_{L^4}^4\big),\\
J_7&\le (c_a+c_d)\frac{d}{dt}\int|\nabla w|^2\theta dx+\delta\big(\|\nabla\dot{w}\|_{L^2}^2
+\|\nabla w\|_{L^4}^4\big)+C(\delta)\big(1+\|\nabla\theta\|_{L^2}^4+\|\nabla u\|_{L^4}^4\big).
\end{align*}
By H\"older's inequality, we get that
\begin{align*}
J_8&=(c_d-c_a)\frac{d}{dt}\int\partial_iw^j\partial_jw^i\theta dx
-(c_d-c_a)\int\big(\partial_i\dot{w}^j\partial_jw^i+\partial_iw^j\partial_j\dot{w}^i\big)
\theta dx\nonumber\\
&\quad+(c_d-c_a)\int\big(\partial_iu^k\partial_kw^j\partial_jw^i+\partial_ju^k
\partial_kw^i\partial_iw^j
-{\rm div}u\partial_iw^j\partial_jw^i\big)\theta dx\nonumber\\
&\le (c_d-c_a)\frac{d}{dt}\int\partial_iw^j\partial_jw^i\theta dx+\delta\big(\|\nabla\dot{w}\|_{L^2}^2+\|\nabla w\|_{L^4}^4\big)\nonumber\\
&\quad+\delta^{-1}\|\theta\|_{L^6}\|\nabla w\|_{L^2}^\frac13\|\nabla w\|_{L^4}^\frac23
+\delta^{-\frac32}\|\theta\|_{L^6}\|\nabla u\|_{L^2}^\frac13\|\nabla u\|_{L^4}^\frac23\nonumber\\
&\le (c_d-c_a)\frac{d}{dt}\int\partial_iw^j\partial_jw^i\theta dx
+\delta\|\nabla\dot{w}\|_{L^2}^2+C\big(1+\|\nabla\theta\|_{L^2}^4+\|\nabla u\|_{L^4}^4+\|\nabla w\|_{L^4}^4\big).
\end{align*}
Combining all the estimates for $J_i\ (i=1, 2, \cdots, 8)$ altogether and choosing $\eta_1\le \delta$ suitably small, one obtains that
\begin{align}\label{4.11}
&\frac{d}{dt}\int\Phi dx+c_v\|\sqrt{\rho}\dot{\theta}\|_{L^2}^2\nonumber\\
&\le C+C\|\nabla\theta\|_{L^2}^4+C\eta_1\|\nabla\dot{u}\|_{L^2}^2
+C\eta_1\|\nabla\dot{w}\|_{L^2}^2
+\|\nabla u\|_{L^4}^4+C\|\nabla w\|_{L^4}^4+C\|w\|_{L^4}^4,
\end{align}
where
\begin{align}\label{4.12}
\Phi&\triangleq\kappa|\nabla\theta|^2-2\lambda(\divv u)^2\theta-4\mu \mathcal{D}:\mathcal{D}\theta
-8\mu_r\Big|\frac12\curl u-w\Big|^2\theta
-2c_0(\divv w)^2\theta\nonumber\\
&\quad-2(c_a+c_d)|\nabla w|^2\theta-2(c_d-c_a)\partial_iw^j\partial_jw^i\theta.
\end{align}

3. Adding \eqref{4.11} to \eqref{4.5} multiplied by $\eta_1^\frac12$, we then deduce after choosing $\eta_1$ suitably small that
\begin{align}\label{4.13}
&2\frac{d}{dt}\int\Big(\Phi+\eta_1^\frac12\rho|\dot{u}|^2
+j_I\eta_1^\frac12\rho|\dot{w}|^2\Big)dx
+c_v\|\sqrt{\rho}\dot{\theta}\|_{L^2}^2
+\frac{\mu\eta_1^\frac12}{2}\|\nabla\dot{u}\|_{L^2}^2
+(\mu+\lambda)\eta_1^\frac12\|\divv\dot{u}\|_{L^2}^2\nonumber\\
&\quad+\frac{c_d\eta_1^\frac12}{2}\|\nabla\dot{w}\|_{L^2}^2
+(c_0+c_d)\eta_1^\frac12\|\divv\dot{w}\|_{L^2}^2
+c_a\eta_1^\frac12\|\curl \dot{w}\|_{L^2}^2+4\eta_1^\frac12\mu_r\Big\|\frac12\curl
\dot{u}-\dot{w}\Big\|_{L^2}^2\nonumber\\
&\le C+C\|\nabla\theta\|_{L^2}^4+C\|\nabla u\|_{L^4}^4+C\|\nabla w\|_{L^4}^4+C\|w\|_{L^4}^4,
\end{align}
where $\Phi$ is given by \eqref{4.12}. According to Lemma \ref{l23}, we have
\begin{align}
\|\nabla u\|_{L^6}&\le C\big(\|\rho\dot{u}\|_{L^2}+\|\nabla w\|_{L^2}+\|\rho\theta\|_{L^6}\big)
\le C\big(\|\sqrt{\rho}\dot{u}\|_{L^2}+\|\nabla w\|_{L^2}+\|\nabla\theta\|_{L^2}\big),\label{4.14}\\
\|\nabla w\|_{L^6}&\le C\big(\|\rho\dot{w}\|_{L^2}+\|\nabla u\|_{L^2}+\|w\|_{L^2}\big)
\le C\big(\|\sqrt{\rho}\dot{w}\|_{L^2}+\|\nabla u\|_{L^2}+\|w\|_{L^2}\big).\label{4.15}
\end{align}
Hence, by the Young and Gagliardo-Nirenberg inequalities, one derives from \eqref{4.14} and \eqref{4.15} that
\begin{align}
\|\nabla u\|_{L^4}^4+\|\nabla w\|_{L^4}^4&\le C\|\nabla u\|_{L^2}\|\nabla u\|_{L^6}^3
+C\|\nabla w\|_{L^2}\|\nabla w\|_{L^6}^3\nonumber\\
&\le C(\|\nabla u\|_{L^2}^4+\|\nabla w\|_{L^2}^4)+C\|\nabla u\|_{L^6}^4+C\|\nabla w\|_{L^6}^4\nonumber\\
&\le C(\|\nabla u\|_{L^2}^4+\|\nabla w\|_{L^2}^4+\|\nabla\theta\|_{L^2}^4+\|w\|_{L^2}^4)
+C\|\sqrt{\rho}\dot{u}\|_{L^2}^4+C\|\sqrt{\rho}\dot{w}\|_{L^2}^4\nonumber\\
&\le C\|\nabla\theta\|_{L^2}^4+C\|\sqrt{\rho}\dot{u}\|_{L^2}^4
+C\|\sqrt{\rho}\dot{w}\|_{L^2}^4+C,
\end{align}
which together with \eqref{4.13} implies that
\begin{align}\label{4.17}
&2\frac{d}{dt}\int\Big(\Phi+\eta_1^\frac12\rho|\dot{u}|^2
+j_I\eta_1^\frac12\rho|\dot{w}|^2\Big)dx
+c_v\|\sqrt{\rho}\dot{\theta}\|_{L^2}^2
+\frac{\mu\eta_1^\frac12}{2}\|\nabla\dot{u}\|_{L^2}^2
+(\mu+\lambda)\eta_1^\frac12\|\divv\dot{u}\|_{L^2}^2\nonumber\\
&\quad+\frac{c_d\eta_1^\frac12}{2}\|\nabla\dot{w}\|_{L^2}^2
+(c_0+c_d)\eta_1^\frac12\|\divv\dot{w}\|_{L^2}^2
+c_a\eta_1^\frac12\|\curl \dot{w}\|_{L^2}^2+4\eta_1^\frac12\mu_r\Big\|\frac12\curl
\dot{u}-\dot{w}\Big\|_{L^2}^2\nonumber\\
&\le C\|\nabla\theta\|_{L^2}^4+C\|\sqrt{\rho}\dot{u}\|_{L^2}^4
+C\|\sqrt{\rho}\dot{w}\|_{L^2}^4+C\|w\|_{L^4}^4+C.
\end{align}
By the definition of $\Phi$, H\"older's, Young's, and Gagliardo-Nirenberg inequalities, we get from Corollary \ref{c31}, \eqref{4.14}, and \eqref{4.15} that
\begin{align}\label{4.18}
&2\int\Big(\Phi+\eta_1^\frac12\rho|\dot{u}|^2
+j_I\eta_1^\frac12\rho|\dot{w}|^2\Big)dx\nonumber\\
&\ge 2\kappa\|\nabla\theta\|_{L^2}^2
-C\|\theta\|_{L^6}\|\nabla u\|_{L^2}^\frac32\|\nabla u\|_{L^6}^\frac12
-C\|\theta\|_{L^6}\|\nabla w\|_{L^2}^\frac32\|\nabla w\|_{L^6}^\frac12\nonumber\\
&\quad-C\|\theta\|_{L^6}\|w\|_{L^2}^\frac32\|\nabla w\|_{L^2}^\frac12
+2\int\Big(\eta_1^\frac12\rho|\dot{u}|^2+j_I\eta_1^\frac12\rho|\dot{w}|^2\Big)dx
\nonumber\\
&\ge \frac32\kappa\|\nabla\theta\|_{L^2}^2
-C(1+\|\nabla u\|_{L^6}+\|\nabla w\|_{L^6})+
2\int\Big(\eta_1^\frac12\rho|\dot{u}|^2+j_I\eta_1^\frac12\rho|\dot{w}|^2\Big)dx
\nonumber\\
&\ge \frac32\kappa\|\nabla\theta\|_{L^2}^2
-C(1+\|\sqrt{\rho}\dot{u}\|_{L^2}+\|\sqrt{\rho}\dot{w}\|_{L^2})+
2\int\Big(\eta_1^\frac12\rho|\dot{u}|^2+j_I\eta_1^\frac12\rho|\dot{w}|^2\Big)dx
\nonumber\\
&\ge \kappa\|\nabla\theta\|_{L^2}^2
-C(\eta_1)+\int\Big(\eta_1^\frac12\rho|\dot{u}|^2
+j_I\eta_1^\frac12\rho|\dot{w}|^2\Big)dx.
\end{align}
Thus, integrating \eqref{4.17} over $[0, T]$, we deduce the desired \eqref{4.1} from \eqref{4.18}, Corollary \ref{c31}, and Gronwall's inequality.
\end{proof}

\begin{lemma}\label{l42}
Let $\varepsilon_0$ be as in Lemma \ref{l37} and assume that $N_0\le \varepsilon_0$, then it holds that
\begin{align}\label{4.19}
\sup_{0\le t\le T}\big(\|\sqrt{\rho}\dot{\theta}\|_{L^2}^2+\|\nabla^2\theta\|_{L^2}^2\big)
+\int_0^T\|\nabla\dot{\theta}\|_{L^2}^2dt\le C(T).
\end{align}
\end{lemma}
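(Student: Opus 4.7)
The plan is to derive the $L^\infty_t L^2_x$ bound on $\sqrt{\rho}\dot{\theta}$ first, and then to recover the bound on $\nabla^2\theta$ by elliptic regularity. To this end, I would apply the material-derivative operator $\partial_t+\divv(u\,\cdot)$ to the temperature equation $\eqref{a1}_4$. Using the continuity equation $\eqref{a1}_1$, the left-hand side $c_v(\rho\dot{\theta})_t+c_v\divv(\rho u\dot{\theta})$ collapses to $c_v\rho\ddot{\theta}$. The diffusion term contributes $\kappa\Delta\dot{\theta}$ together with commutator terms of the shape $\kappa\Delta u\cdot\nabla\theta$ and $\kappa\,\partial_i u^k\,\partial_i\partial_k\theta$, while the pressure term $-R\rho\theta\divv u$ produces expressions that can be rewritten via $\eqref{a1}_1$ in terms of $\rho\dot{\theta}\divv u$, $\rho\theta\divv\dot{u}$ and $\rho\theta(\partial_i u^k\partial_k u^i)$. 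The quadratic sources in $\nabla u,\nabla w,w$ yield terms like $\nabla u\!:\!\nabla\dot{u}$, $\nabla w\!:\!\nabla\dot{w}$, $(\curl u-2w)\cdot(\curl\dot{u}-2\dot{w})$, plus lower-order contributions proportional to $|\nabla u|^3$, $|\nabla w|^3$.

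Multiplying the resulting identity by $\dot{\theta}$, integrating over $\mathbb{R}^3$ and integrating by parts, the two principal positive contributions become $\tfrac{c_v}{2}\tfrac{d}{dt}\|\sqrt{\rho}\dot{\theta}\|_{L^2}^2$ and $\kappa\|\nabla\dot{\theta}\|_{L^2}^2$. All the remaining terms would be controlled by H\"older's inequality, the Sobolev embedding $D_0^1\hookrightarrow L^6$, Gagliardo-Nirenberg interpolation, and the bounds already available: $L^\infty_t L^2_x$ bounds on $\nabla\theta$, $\sqrt{\rho}\dot{u}$, $\sqrt{\rho}\dot{w}$, $\nabla u$, $w$, $\nabla w$, $\sqrt{\rho}\Psi$ (Corollary \ref{c31} and Lemma \ref{l41}), together with $L^2_t$ bounds on $\nabla\dot{u}$, $\nabla\dot{w}$, $\sqrt{\rho}\dot{\theta}$. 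Typical terms such as $\int(\nabla u\!:\!\nabla\dot{u})\dot{\theta}\,dx$ are split as $\|\nabla u\|_{L^3}\|\nabla\dot{u}\|_{L^2}\|\dot{\theta}\|_{L^6}$ and then absorbed using $\|\dot{\theta}\|_{L^6}\le C(\|\nabla\dot{\theta}\|_{L^2}+\|\sqrt{\rho}\dot{\theta}\|_{L^2})$ (the latter via the Sobolev-Poincar\'e type inequality together with the vacuum-at-infinity structure) and Young's inequality; the $L^6$ bounds \eqref{4.14}--\eqref{4.15} control $\|\nabla u\|_{L^3},\|\nabla w\|_{L^3}$. The term $c_v\int\rho\divv u|\dot{\theta}|^2 dx$ is estimated by $\|\divv u\|_{L^3}\|\sqrt{\rho}\dot{\theta}\|_{L^2}\|\sqrt{\rho}\dot{\theta}\|_{L^6}$ and likewise absorbed. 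Adding these estimates and invoking Gronwall's inequality (whose integrating factor is controlled thanks to $\int_0^T(\|\nabla\dot{u}\|_{L^2}^2+\|\nabla\dot{w}\|_{L^2}^2+\|\sqrt{\rho}\dot{\theta}\|_{L^2}^2)\,dt\le C(T)$ from Lemma \ref{l41}) then yields the asserted $L^\infty_t$ bound on $\|\sqrt{\rho}\dot{\theta}\|_{L^2}$ and the $L^2_t$ bound on $\|\nabla\dot{\theta}\|_{L^2}$.

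Finally, the $L^\infty_t L^2_x$ estimate for $\nabla^2\theta$ follows at once by viewing $\eqref{a1}_4$ as an elliptic equation for $\theta$: the standard $L^2$-elliptic estimate \eqref{f22} already derived in Lemma \ref{l41} gives $\|\nabla^2\theta\|_{L^2}\le C(\|\sqrt{\rho}\dot{\theta}\|_{L^2}+\|\nabla u\|_{L^4}^2+\|w\|_{L^4}^2+\|\nabla w\|_{L^4}^2)$, and the right-hand side is uniformly bounded on $[0,T]$ thanks to the bound just established on $\sqrt{\rho}\dot{\theta}$ together with the $L^4$ control of $\nabla u$, $\nabla w$, $w$ coming from the interpolation $\|f\|_{L^4}^4\le C\|f\|_{L^2}\|f\|_{L^6}^3$ and estimates \eqref{4.14}--\eqref{4.15}.

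The main obstacle I anticipate is the careful bookkeeping of the cross terms $\int\nabla u\!:\!\nabla\dot{u}\,\dot{\theta}\,dx$ and $\int\rho\theta\,\divv\dot{u}\,\dot{\theta}\,dx$ that couple the dissipations for $\dot{u},\dot{w}$ with that for $\dot{\theta}$: they must be split so that the $\|\nabla\dot{u}\|_{L^2}^2$ and $\|\nabla\dot{w}\|_{L^2}^2$ factors appear multiplied only by $L^1$-in-time functions of $t$, so that Gronwall can be applied without using dissipation that is not yet known to be $L^\infty_t$-bounded. The commutator terms from $[\kappa\Delta,\,u\cdot\nabla]\theta$ similarly need one integration by parts in order to distribute one derivative onto the factor whose $L^2$-norm is under control.
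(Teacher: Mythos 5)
Your proposal follows essentially the same route as the paper: apply $\partial_t+\divv(u\,\cdot)$ to the temperature equation, test with $\dot{\theta}$, control the resulting terms via H\"older, Sobolev and Gagliardo--Nirenberg using the bounds from Corollary \ref{c31} and Lemma \ref{l41} (the coefficients $\|\nabla u\|_{L^3}^2$, $\|\nabla w\|_{L^3}^2$, $\|w\|_{L^3}^2$, $\|\sqrt{\rho}\theta\|_{L^3}^2$ being uniformly bounded in time, so the right-hand side is integrable and the Gronwall step is in fact a direct time integration), and then recover $\sup_t\|\nabla^2\theta\|_{L^2}$ from the elliptic estimate \eqref{f22}. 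The only detail left implicit is that the initial value $\|\sqrt{\rho_0}\dot{\theta}_0\|_{L^2}$ is finite, which the paper obtains from the compatibility condition \eqref{1.7}$_3$.
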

\begin{proof}[Proof]

1. Applying the operator $\partial_t+\divv(u\cdot)$ to $\eqref{a1}_4$ gives rise to
\begin{align}\label{t8}
c_v \rho\big(\dot{\theta}_{t}+u \cdot \nabla\dot{\theta}\big)=& \kappa \Delta \dot{\theta}+\kappa\big(\divv u \Delta \theta-\partial_{i}(\partial_{i} u \cdot \nabla \theta)-\partial_{i} u \cdot \nabla \partial_{i} \theta\big)\nonumber \\
&+\big(\lambda(\divv  u)^{2}+2 \mu \mathcal{D}: \mathcal{D}\big) \divv u+R \rho \theta \partial_{k} u^{i} \partial_{i} u^{k}-R \rho \dot{\theta} {\rm div}u-R \rho \theta \divv \dot{u}\nonumber \\
&+2 \lambda(\divv \dot{u}-\partial_{k} u^{i} \partial_{i} u^{k}) {\rm div}u+\mu(\partial_{i} u^{j}+\partial_{j} u^{i})\big(\partial_{i} \dot{u}^{j}+\partial_{j} \dot{u}^{i}-\partial_{i} u^{k} \partial_{k} u^{j}-\partial_{j} u^{k} \partial_{k} u^{i}\big)\nonumber  \\
&+8 \mu_{r}\Big(\frac{1}{2} \curl u-w\Big)\cdot\Big(\frac{1}{2} \curl \dot{u}-\dot{w}\Big)+4 \mu_{r}\divv u\Big|\frac12\curl u-w\Big|^{2} \nonumber \\
&+8 \mu_{r}\Big(\frac{1}{2} \curl u-w\Big)\cdot\Big(\frac{1}{2} u \cdot \nabla \curl u-\frac{1}{2}\curl(u \cdot \nabla u)\Big)\nonumber  \\
&+(c_{0}(\divv w)^{2}+(c_{a}+c_{d}) \nabla w: \nabla w^{\top}+(c_{d}-c_{a}) \nabla w: \nabla w) \divv u\nonumber  \\
&+2 c_{0}(\divv \dot{w}-\partial_{k} u^{i} \partial_{i} w^{k})\divv w+2(c_{a}+c_{d})(\partial_{i} \dot{w}^{j}-\partial_{i} u^{k} \partial_{k} w^{j}) \partial_{i} w^{j}\nonumber  \\
&+(c_{d}-c_{a})\big(\big(\partial_{i} \dot{w}^{j}-\partial_{i} u^{k} \partial_{k} w^{j}\big) \partial_{j} w^{i}+\big(\partial_{j} \dot{w}^{i}-\partial_{j} u^{k} \partial_{k} w^{i}\big) \partial_{i} w^{j}\big).
\end{align}
Multiplying \eqref{t8} by $\dot{\theta}$ and integrating the resulting equation on $\mathbb{R}^3$, we get from H{\"o}lder's inequality and Sobolev's inequality that
\begin{align}\label{t9}
&\frac{c_v}{2}\frac{d}{dt}\int\rho\dot{\theta}dx+\kappa\int|\nabla \dot{\theta}|^2dt \nonumber\\
&\le \int |\nabla u|\big(|\nabla^{2} \theta||\dot{\theta}|+|\nabla \theta \| \nabla \dot{\theta}|\big)dx+C\int|\nabla u|^2|\dot{\theta}|\big(|\nabla u|+\rho\theta\big)dx\nonumber\\
&\quad+C\int \rho|\dot{\theta}|^{2}|\nabla u| dx dt+C\int \rho \theta|\nabla \dot{u} \| \dot{\theta}| dx dt \nonumber\\
&\quad+C\int |\nabla u\|\nabla \dot{u}\| \dot{\theta}| dx dt+C \int|\nabla u\|\dot{\theta}\| \dot{w}| dx dt
+C\int |w||\nabla \dot{u} \| \dot{\theta}| dx dt \nonumber\\
&\quad+C\int |w\|\dot{w}\| \dot{\theta}| dx dt+C\int |w||\nabla u|^{2}|\dot{\theta}| dx dt+C\int |\nabla u \| w|^{2}|\dot{\theta}|dx dt\nonumber \\
&\quad+C\int |\nabla w|^{2}|\nabla u|| \dot{\theta}|dx dt+C\int|\nabla \dot{w}||\nabla w||\dot{\theta}|dx dt
\nonumber\\
&\le C\|\nabla u\|_{L^3}\big(\|\nabla^2\theta\|_{L^2}\|\dot{\theta}\|_{L^6}
+\|\nabla\dot{\theta}\|_{L^2}\|\nabla\theta\|_{L^6}\big)
+C\|\rho\theta\|_{L^3}\|\nabla u\|_{L^4}^2\|\dot{\theta}\|_{L^6}\nonumber\\
&\quad+C\|\nabla u\|_{L^3}\|\nabla u\|_{L^4}^2\|\dot{\theta}\|_{L^6}
+C\|\sqrt{\rho}\dot{\theta}\|_{L^2}\|\dot{\theta}\|_{L^6}\|\nabla u\|_{L^3}
+C\|\rho\theta\|_{L^3}\|\nabla\dot{u}\|_{L^2}\|\dot{\theta}\|_{L^6}\nonumber\\
&\quad+C\|\nabla u\|_{L^3}\|\nabla\dot{u}\|_{L^2}\|\dot{\theta}\|_{L^6}
+C\|w\|_{L^3}\|\nabla\dot{u}\|_{L^2}\|\dot{\theta}\|_{L^6}
+C\|w\|_{L^3}\|\dot{w}\|_{L^2}\|\dot{\theta}\|_{L^6}\nonumber\\
&\quad+C\|w\|_{L^3}\|\nabla u\|_{L^4}^2\|\dot{\theta}\|_{L^6}
+C\|\dot{\theta}\|_{L^6}\|\nabla u\|_{L^2}\|w\|_{L^6}^2
+C\|\nabla w\|_{L^4}^2\|\nabla u\|_{L^3}\|\dot{\theta}\|_{L^6}\nonumber\\
&\quad+C\|\nabla\dot{w}\|_{L^2}\|\nabla w\|_{L^3}\|\dot{\theta}\|_{L^6}\nonumber\\
&\le C\big(\|\sqrt{\rho}\theta\|_{L^3}^2+\|\nabla u\|_{L^3}^2+\|\nabla w\|_{L^3}^2+\|w\|_{L^3}^2\big)\big(
\|\sqrt{\rho}\dot{\theta}\|_{L^2}^2+\|\nabla^2\theta\|_{L^2}^2+\|\nabla w\|_{L^4}^4\nonumber\\
&\quad+\|\nabla\dot{w}\|_{L^2}^2
+\|\nabla u\|_{L^4}^4+\|\nabla\dot{u}\|_{L^2}^2\big)
+\frac{\kappa}{2}\|\nabla\dot{\theta}\|_{L^2}^2,
\end{align}
which leads to
\begin{align}\label{4.22}
& c_v\frac{d}{dt}\|\sqrt{\rho}\dot{\theta}\|_{L^2}^2
+\kappa\|\nabla\dot{\theta}\|_{L^2}^2 \notag \\
&\le C\big(\|\sqrt{\rho}\theta\|_{L^3}^2+\|\nabla u\|_{L^3}^2+\|\nabla w\|_{L^3}^2+\|w\|_{L^3}^2\big)\nonumber\\
&\quad \times \big( \|\sqrt{\rho}\dot{\theta}\|_{L^2}^2+\|\nabla^2\theta\|_{L^2}^2
+\|\nabla w\|_{L^4}^4+\|\nabla\dot{w}\|_{L^2}^2+\|\dot{w}\|_{L^2}^2
+\|\nabla u\|_{L^4}^4+\|\nabla\dot{u}\|_{L^2}^2\big).
\end{align}

2. It follows from H{\"o}lder's inequality, Sobolev's inequality, \eqref{4.14}, \eqref{4.15}, \eqref{4.1}, and Corollary \ref{c31} that
\begin{align}\label{4.23}
&\sup_{0\le t\le T}\big(\|\sqrt{\rho}\theta\|_{L^3}^2+\|\nabla u\|_{L^3}^2+\|\nabla w\|_{L^3}^2+\|w\|_{L^3}^2\big)\nonumber\\
&\le C\sup_{0\le t\le T}\big(\|\sqrt{\rho}\theta\|_{L^2}^2+
\|\nabla \theta\|_{L^2}^2+\|\nabla u\|_{L^2}^2+\|w\|_{H^1}^2+\|\nabla u\|_{L^6}^2+\|\nabla w\|_{L^6}^2\big)\nonumber\\
&\le C\sup_{0\le t\le T}\big(\|\sqrt{\rho}\theta\|_{L^2}^2+
\|\nabla \theta\|_{L^2}^2+\|\nabla u\|_{L^2}^2+\|\sqrt{\rho}\dot{w}\|_{L^2}^2
+\|\sqrt{\rho}\dot{u}\|_{L^2}^2+\|w\|_{H^1}^2\big)\nonumber\\
&\le C(T).
\end{align}
From  \eqref{4.1} and Corollary \ref{c31}, we find that
\begin{align}\label{4.25}
&\int_0^T\big(\|\nabla^2\theta\|_{L^2}^2+\|\nabla w\|_{L^4}^4+\|\nabla\dot{w}\|_{L^2}^2
+\|\nabla u\|_{L^4}^4+\|\nabla\dot{u}\|_{L^2}^2\big)dt\nonumber\\
&\le C\int_0^T\big(\|\sqrt{\rho}\dot{\theta}\|_{L^2}^2
+\|\nabla u\|_{L^4}^4
+\|w\|_{L^4}^4+\|\nabla w\|_{L^4}^4+\|\nabla\dot{w}\|_{L^2}^2+\|\dot{w}\|_{L^2}^2
+\|\nabla u\|_{L^4}^4+\|\nabla\dot{u}\|_{L^2}^2\big)dt\nonumber\\
&\le C+C\int_0^T\big(\|\nabla w\|_{L^4}^4+\|\nabla u\|_{L^4}^4\big)dt\nonumber\\
&\le C+C\int_0^T\big(\|\nabla\theta\|_{L^2}^4+\|\sqrt{\rho}\dot{u}\|_{L^2}^4
+\|\sqrt{\rho}\dot{w}\|_{L^2}^4\big)dt\le C(T).
\end{align}
Consequently, integrating \eqref{4.22} in $t$ together with \eqref{4.23}, \eqref{4.25}, and \eqref{1.7}$_3$ yields \eqref{4.19}.
\end{proof}

\begin{lemma}\label{lz}
Let $q$ be as in Theorem \ref{thm1}.
Let $\varepsilon_0$ be as in Lemma \ref{l37} and assume that $N_0\le \varepsilon$, then it holds that
\begin{align}
\sup_{0\le t\le T}\big(\|\nabla\rho\|_{L^2\cap L^q}+\|\nabla^2u\|_{L^2}+\|\nabla^2w\|_{L^2}\big)
+\int_0^T\big(\|\nabla^2u\|_{L^q}^2+\|\nabla^2\theta\|_{L^q}^2
+\|\nabla^2w\|_{L^q}^2\big)dt\le C(T).
\end{align}
\end{lemma}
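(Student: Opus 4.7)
The approach follows the classical Hoff--Huang--Li--Xin logarithmic Gronwall scheme, adapted to the micropolar system. First I would apply $\nabla$ to the mass equation $\rho_t+\divv(\rho u)=0$, multiply by $|\nabla\rho|^{r-2}\nabla\rho$ for $r\in\{2,q\}$, and integrate by parts to obtain
\begin{equation*}
\frac{d}{dt}\|\nabla\rho\|_{L^r}\le C\big(1+\|\nabla u\|_{L^\infty}\big)\|\nabla\rho\|_{L^r}+C\|\nabla^2u\|_{L^r},
\end{equation*}
so the task reduces to controlling $\|\nabla u\|_{L^\infty}$ and $\|\nabla^2 u\|_{L^r}$ in terms of $\|\nabla\rho\|_{L^2\cap L^q}$ and quantities already bounded by Lemmas \ref{l41}--\ref{l42} and Corollary \ref{c31}.

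For the second-order spatial derivatives, viewing the momentum, angular momentum, and temperature equations as elliptic systems in $(u,w,\theta)$ and invoking Lemma \ref{l23} together with a Hodge-type decomposition gives
\begin{equation*}
\|\nabla^2u\|_{L^r}\le C\big(\|\rho\dot u\|_{L^r}+\|\nabla w\|_{L^r}+\|\nabla(\rho\theta)\|_{L^r}\big),
\end{equation*}
and analogous bounds for $\|\nabla^2 w\|_{L^r}$ and $\|\nabla^2\theta\|_{L^r}$ (the latter controlled by $\|\rho\dot\theta\|_{L^r}$ plus source terms quadratic in $\nabla u,\nabla w,w$). To handle $\|\rho\dot u\|_{L^q}$ I would interpolate between $\|\sqrt\rho\dot u\|_{L^2}\le C$ from Lemma \ref{l41} and $\|\nabla\dot u\|_{L^2}$ via Gagliardo--Nirenberg, exploiting $\|\rho\|_{L^\infty}\le C$. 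The term $\|\nabla(\rho\theta)\|_{L^q}\le C(\|\theta\|_{L^\infty}\|\nabla\rho\|_{L^q}+\|\nabla\theta\|_{L^q})$ produces an essential factor of $\|\nabla\rho\|_{L^q}$ that must be absorbed through Gronwall.

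For $\|\nabla u\|_{L^\infty}$ I would invoke the Beale--Kato--Majda inequality (Lemma \ref{l24}):
\begin{equation*}
\|\nabla u\|_{L^\infty}\le C\big(\|\divv u\|_{L^\infty}+\|\curl u\|_{L^\infty}\big)\ln\bigl(e+\|\nabla^2u\|_{L^q}\bigr)+C\big(1+\|\nabla u\|_{L^2}\big),
\end{equation*}
and control $\|\divv u\|_{L^\infty}$ via the effective viscous flux $F_1$ and $\|\curl u\|_{L^\infty}$ via $V_1$ using Gagliardo--Nirenberg and Lemma \ref{l23}, so that their $L^\infty$ norms are bounded by $L^q$ norms of $\nabla F_1,\nabla V_1$, hence by $\|\rho\dot u\|_{L^q}+\|\nabla w\|_{L^q}$. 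Combining the preceding estimates produces a logarithmic differential inequality of the form
\begin{equation*}
\frac{d}{dt}\Xi(t)\le Ch(t)\,\Xi(t)\ln\Xi(t)+Ch(t)\Xi(t),\qquad \Xi(t):=e+\|\nabla\rho(t)\|_{L^2}+\|\nabla\rho(t)\|_{L^q},
\end{equation*}
where $h\in L^1(0,T)$ thanks to the bounds on $\|\nabla\dot u\|_{L^2}$, $\|\nabla\dot w\|_{L^2}$, $\|\nabla\dot\theta\|_{L^2}$, $\|\sqrt\rho\dot\theta\|_{L^2}$ and $\|\nabla^2\theta\|_{L^2}$ from Lemmas \ref{l41}--\ref{l42}. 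Applying the Gronwall--Osgood inequality then gives $\sup_{[0,T]}\|\nabla\rho\|_{L^2\cap L^q}\le C(T)$, and substituting back into the elliptic estimates yields the remaining bounds on $\|\nabla^2u\|_{L^2}$, $\|\nabla^2w\|_{L^2}$ and the time integrals $\int_0^T(\|\nabla^2u\|_{L^q}^2+\|\nabla^2w\|_{L^q}^2+\|\nabla^2\theta\|_{L^q}^2)\,dt$.

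The main obstacle is the circular dependence: $\|\nabla^2u\|_{L^q}$ involves $\|\nabla p\|_{L^q}\lesssim\|\nabla\rho\|_{L^q}\|\theta\|_{L^\infty}+\|\nabla\theta\|_{L^q}$, and $\|\theta\|_{L^\infty}$ itself requires $\|\nabla^2\theta\|_{L^q}$ through Sobolev embedding, while $\|\nabla^2\theta\|_{L^q}$ again involves $\rho\dot\theta$ and quadratic terms in $\nabla u$ that are not time-integrable uniformly in $L^q$. The crux is to arrange the interpolations so that the prefactor $h(t)$ of the logarithm is genuinely in $L^1(0,T)$ without any residual dependence on $\|\nabla\rho\|_{L^q}$; this is precisely what the material-derivative estimates of Section \ref{sec4} (particularly $\nabla\dot u,\nabla\dot w,\nabla\dot\theta\in L^2_tL^2_x$ and $\sqrt\rho\dot u,\sqrt\rho\dot w,\sqrt\rho\dot\theta\in L^\infty_tL^2_x$) are designed to provide.
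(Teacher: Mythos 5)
Your proposal is correct and follows essentially the same route as the paper: the transport estimate for $\|\nabla\rho\|_{L^r}$, elliptic $L^q$-estimates for $\nabla^2u$ via $\rho\dot u$ and $\nabla(\rho\theta)$, the Beale--Kato--Majda inequality with $\|\divv u\|_{L^\infty},\|\curl u\|_{L^\infty}$ controlled through $F_1,V_1$ so that the logarithm's prefactor is free of $\|\nabla\rho\|_{L^q}$, and a logarithmic Gronwall closure, with the remaining second-order bounds recovered from the elliptic estimates. The only cosmetic difference is that the paper closes the $L^q$ case first and then handles $L^2$ by ordinary Gronwall (and passes through $L^6$ rather than $L^q$ norms of $\nabla F_1,\nabla V_1$ for the $L^\infty$ bounds), which changes nothing substantive.
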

\begin{proof}[Proof]
1. Taking spatial derivative $\nabla$ on the mass equation \eqref{a1}$_1$
leads to
\begin{equation*}
\partial_{t}\nabla\rho+u\cdot\nabla^2\rho
+\nabla u\cdot\nabla\rho+\divv u\nabla\rho+\rho\nabla\divv u=0.
\end{equation*}
For $q\in (3, 6)$, multiplying the above equality by $q|\nabla\rho|^{q-2}\nabla\rho$ gives that
\begin{align*}
(|\nabla\rho|^q)_t+\divv(|\nabla\rho|^qu)+(q-1)|\nabla\rho|^q\divv u+q|\nabla\rho|^{q-2}(\nabla\rho)^{tr}\nabla u(\nabla\rho)+q\rho|\nabla\rho|^{q-2}\nabla\rho\cdot\nabla\divv u=0.
\end{align*}
Thus, integration by parts over $\mathbb{R}^3$ yields that
\begin{align}\label{0t86}
\frac{d}{dt}\|\nabla\rho\|_{L^q}&\le C\big(\|\nabla u\|_{L^\infty}+1\big)\|\nabla\rho\|_{L^q}+\|\nabla^2u\|_{L^q}.
\end{align}
Applying the standard $L^q$-estimate for the elliptic system
\begin{align*}
\begin{cases}
(\mu+\mu_r)\Delta u+(\mu+\lambda-\mu_r)\nabla\divv u=\rho\dot{u}+\nabla p
+2\mu_r\curl w, \\
u\rightarrow 0~{\rm as}~|x|\rightarrow\infty,
\end{cases}
\end{align*}
we have that
\begin{align}\label{t87}
\|\nabla^2u\|_{L^q}&\le C\big(\|\rho\dot{u}\|_{L^q}+\|\nabla (\rho\theta)\|_{L^q}+\|\curl w\|_{L^q}\big)\nonumber\\
&\le C\big(\|\rho\dot{u}\|_{L^2}+\|\rho\dot{u}\|_{L^6}
+\|\rho\|_{L^\infty}\|\nabla\theta\|_{L^q}
+\|\theta\|_{L^\infty}\|\nabla\rho\|_{L^q}
+\|w\|_{H^2}\big)\nonumber\\
&\le C\big(1+\|\nabla\dot{u}\|_{L^2}+\|\nabla\rho\|_{L^q}+\|w\|_{H^2}\big),
\end{align}
due to Sobolev's inequality, Corollary \ref{c31}, \eqref{4.1}, \eqref{4.19}, and Gagliardo-Nirenberg inequality. Thus, we get from \eqref{0t86} and \eqref{t87} that
\begin{align}\label{t86}
\frac{d}{dt}\|\nabla\rho\|_{L^q}
\le C\big(\|\nabla u\|_{L^\infty}+1\big)\|\nabla\rho\|_{L^q}
+C\big(1+\|\nabla\dot{u}\|_{L^2}
+\|\nabla\rho\|_{L^q}+\|w\|_{H^2}\big).
\end{align}

2. It follows from Gagliardo-Nirenberg inequality, Lemma \ref{l23}, and \eqref{4.1} that
\begin{align}\label{t88}
& \|\divv u\|_{L^\infty}+\|\curl u\|_{L^\infty} \notag \\
& \le C\big(\|F_1\|_{L^\infty}+\|\rho\theta\|_{L^\infty}
+\|\curl u\|_{L^\infty}\big)\nonumber\\
&\le C\|F_1\|_{L^6}^\frac12\|\nabla F_1\|_{L^6}^\frac12
+C\|\nabla\theta\|_{L^2}^\frac12\|\nabla^2\theta\|_{L^2}^\frac12
+C\|\curl u\|_{L^6}^\frac12\|\nabla\curl w\|_{L^6}^\frac12\nonumber\\
&\le C\big(1+\|\nabla F_1\|_{L^2}+\|\nabla F_1\|_{L^6}
+\|\curl u\|_{L^6}+\|\nabla\curl u\|_{L^6}\big)\nonumber\\
&\le C\big(1+\|\nabla\dot{u}\|_{L^2}+\|w\|_{H^2}\big),
\end{align}
which together with Lemma \ref{l24} and \eqref{t87} yields that
\begin{align}\label{t89}
\|\nabla u\|_{L^\infty}&\le C\big(\|\divv u\|_{L^\infty}
+\|\curl u\|_{L^\infty}\big)\ln\big(e+\|\nabla^2u\|_{L^q}\big)
+C\|\nabla u\|_{L^2}+C\nonumber\\
&\le C\big(1+\|\nabla\dot{u}\|_{L^2}+\|w\|_{H^2}\big)
\ln\big(1+\|\nabla\dot{u}\|_{L^2}+\|\nabla\rho\|_{L^q}
+\|w\|_{H^2}\big)+C.
\end{align}
Substituting \eqref{t89} into \eqref{t86} leads to
\begin{align}\label{t90}
f'(t)\leq Cg(t)f(t)\ln f(t),
\end{align}
where
\begin{align*}
f(t)\triangleq 1+\|\nabla\rho\|_{L^q},\ g(t)\triangleq 1+\|\nabla\dot{u}\|_{L^2}+\|w\|_{H^2}.
\end{align*}
Then we derive from \eqref{t90}, Gronwall's inequality, Corollary \ref{c31}, and \eqref{4.1} that
\begin{align}\label{t91}
\sup_{0\le t\le T}\|\nabla\rho\|_{L^q}\le C(T).
\end{align}
It should be noted that \eqref{0t86} also holds true for $q=2$, hence we obtain from Gronwall's inequality, \eqref{t89}, \eqref{t91}, \eqref{t87},
\eqref{4.1}, and Corollary \ref{c31} that
\begin{align}\label{zt91}
\sup_{0\le t\le T}\|\nabla\rho\|_{L^2}\le C(T).
\end{align}

3. We infer from \eqref{4.1}, \eqref{zt91}, Corollary \ref{c31}, and \eqref{4.19} that
\begin{align}\label{4.34}
\sup_{0\le t\le T}\|\nabla^2u\|_{L^2}&\le C\sup_{0\le t\le T}\big(\|\rho\dot{u}\|_{L^2}
+\|\nabla(\rho\theta)\|_{L^2}+\|\nabla w\|_{L^2}\big)\nonumber\\
&\le C\sup_{0\le t\le T}
\big(\|\rho\|_{L^\infty}^{\frac12}\|\sqrt{\rho}\dot{u}\|_{L^2}
+\|\theta\|_{L^\infty}\|\nabla\rho\|_{L^2}+\|\rho\|_{L^\infty}
\|\nabla\theta\|_{L^2}+\|\nabla w\|_{L^2}\big) \notag \\
&\le C(T), \\
\sup_{0\le t\le T}\|\nabla^2w\|_{L^2}&\le C\sup_{0\le t\le T}\big(\|\sqrt{\rho}\dot{w}\|_{L^2}
+\|\nabla u\|_{L^2}+\| w\|_{L^2}\big)\le C(T),
\end{align}
and
\begin{align}
\int_0^T\|\nabla^2u\|_{L^q}^2dt\le C\int_0^T\big(1+\|\nabla\dot{u}\|_{L^2}^2+\|\nabla\rho\|_{L^q}^2
+\|w\|_{H^2}^2\big)dt\le C(T).
\end{align}
Notice that
\begin{align}\label{4.37}
\int_0^T\|\nabla^2w\|_{L^q}^2dt
& \le C\int_0^T\big(\|\rho\dot{w}\|_{L^q}^2+\|\nabla u\|_{L^q}^2+\|w\|_{L^q}^2\big)dt\nonumber\\
& \le C\int_0^T\big(1+\|\nabla\dot{w}\|_{L^2}^2
+\|\nabla u\|_{H^1}^2+\|w\|_{H^1}^2\big)dt \notag \\
& \le C(T).
\end{align}
Finally, it follows from Sobolev's inequality, Gagliardo-Nirenberg inequality, Corollary \ref{c31}, \eqref{4.1}, \eqref{4.19}, \eqref{4.25}, and \eqref{4.34}--\eqref{4.37} that
\begin{align}
& \int_0^T\|\nabla^2\theta\|_{L^q}^2dt \nonumber\\
&\le C\int_0^T\big(\|\rho\dot{\theta}\|_{L^q}^2
+\|\theta\nabla u\|_{L^q}^2+\|\nabla u\|_{L^{2q}}^4
+\|\nabla w\|_{L^{2q}}^4\big)dt\nonumber\\
&\le C\int_0^T\big(\|\rho\dot{\theta}\|_{L^2}^2+\|\rho\dot{\theta}\|_{L^6}^2
+\|\theta\|_{L^\infty}^2\|\nabla u\|_{H^1}^2+\|\nabla u\|_{L^{2q}}^4
+\|\nabla w\|_{L^{2q}}^4\big)dt
\nonumber\\
&\le C\int_0^T\big(1+\|\nabla\dot{\theta}\|_{L^2}^2
+\|\nabla\theta\|_{L^2}^\frac12\|\nabla^2\theta\|_{L^2}^\frac12\|\nabla u\|_{H^1}^2
+\|\nabla u\|_{L^3}^2\|\nabla^2u\|_{L^q}^2
+\|\nabla w\|_{L^3}^2\|\nabla^2w\|_{L^q}^2\big)dt\nonumber\\
&\le C\int_0^T\big(1+\|\nabla\dot{\theta}\|_{L^2}^2+\|\nabla^2u\|_{L^q}^2+
\|\nabla^2w\|_{L^q}^2\big)dt \notag \\
&\le C(T).
\end{align}
The proof of Lemma \ref{lz} is finished.
\end{proof}

\section{Proof of Theorem \ref{thm1}}\label{sec5}
By Lemma \ref{l21}, there exists a small $T_0$ such that the Cauchy problem \eqref{a1}--\eqref{a3} has a unique strong solution $(\rho, u, w, \theta)$ in
$\mathbb{R}^3\times (0, T_0]$. Then, we use {\it a priori} estimates obtained by sections \ref{sec3} and \ref{sec4} to extend the local strong solution $(\rho, u, w, \theta)$ to
all time.

We deduce from the definition of $N_T$ and initial data \eqref{1.6}--\eqref{1.8} that
\begin{align}
N_0\le \varepsilon_0, \quad \rho_0< 4\bar{\rho},
\end{align}
Thus, there exists a $T_1\in (0, T_0]$ such that \eqref{3.58} holds for $T=T_1$.

Set
\begin{align}
T^*=\sup\{T~|~\eqref{3.58}~is~valid\}.
\end{align}
Then $T^*\ge T_1$. We next claim that
\begin{align}
T^*=\infty.
\end{align}
Suppose, by contradiction,
that $T^*<\infty$. Note that, all
the {\it a priori} estimates obtained in section \ref{sec3} are uniformly bounded for any $t<T^*$. Hence,
we define
\begin{align}
(\rho, u, w, \theta)(x, T^*)=\lim_{t\rightarrow T^*}(\rho, u, w, \theta)(x, t).
\end{align}
Furthermore, standard arguments yield that $(\rho\dot{u}, \rho\dot{w}, \rho\dot{\theta})\in C([0, T^*); L^2)$, which implies that
\begin{align}
(\rho\dot{u}, \rho\dot{w}, \rho\dot{\theta})(x, T^*)=\lim_{t\rightarrow T^*}(\rho\dot{u}, \rho\dot{w}, \rho\dot{\theta})(x, t)\in L^2.
\end{align}
Hence,
\begin{align*}
\begin{cases}
\big[(\lambda+\mu-\mu_r)\nabla\divv u+(\mu+\mu_r)\Delta u-R\nabla(\rho\theta)+2\mu_r\curl w\big]|_{t=T^*}=\sqrt{\rho}g_1(x),\\
\big[2\mu_r(\curl u-2w)+(c_0+c_d-c_a)\nabla\divv w+(c_a+c_d)\Delta w\big]|_{t=T^*}=\sqrt{\rho_0}g_2(x),\\
\big[\lambda(\divv u)^2+\frac{\mu}{2}|\nabla u+\nabla u^T|^2
+4\mu_r\big|\frac12\curl u-w\big|^2+c_0(\divv w)^2\\
+(c_a+c_d)\nabla w:\nabla w^T+(c_d-c_a)\nabla w:\nabla w+\kappa\Delta\theta\big]|_{t=T^*}=\sqrt{\rho_0}g_3(x),
\end{cases}
\end{align*}
with
\begin{align*}
& g_1(x)=
\begin{cases}
\rho^{-\frac{1}{2}}(x, T^*)(\rho\dot{u})(x, T^*), &{\rm for}~ x\in\{x|\rho(x, T^*)>0\},\\
0,  &{\rm for}~ x\in\{x|\rho(x, T^*)=0\},
\end{cases}\\
& g_2(x)=
\begin{cases}
\rho^{-\frac{1}{2}}(x, T^*)(\rho\dot{w})(x, T^*), &{\rm for}~ x\in\{x|\rho(x, T^*)>0\},\\
0,  &{\rm for}~ x\in\{x|\rho(x, T^*)=0\},
\end{cases}
\end{align*}
and
\begin{align*}
g_3(x)=
\begin{cases}
\rho^{-\frac{1}{2}}(x, T^*)\big(c_v\rho\dot{\theta}+\rho\theta\divv u\big)(x, T^*), &{\rm for}~ x\in\{x|\rho(x, T^*)>0\},\\
0,  &{\rm for}~ x\in\{x|\rho(x, T^*)=0\},
\end{cases}
\end{align*}
satisfying $(g_1, g_2, g_3)\in L^2$ due to Corollary \ref{c31} and \eqref{4.1}. Thus, $(\rho, u, w, \theta)$ satisfy the compatibility condition \eqref{1.7}. Therefore, we can extend the local strong solution beyond $T^*$ by taking $(\rho, u, w, \theta)(, T^*)$ as the initial data and applying the Lemma \ref{l21}. This contradicts the assumption of $T^*$. The proof of Theorem \ref{thm1} is complete.

\end{document}